\newtheorem{theorem}{Theorem}[section]
\newtheorem{lemma}[theorem]{Lemma}
\theoremstyle{definition}
\newtheorem{definition}[theorem]{Definition}
\newtheorem{example}[theorem]{Example}
\theoremstyle{remark}
\newtheorem{remark}[theorem]{Remark}
\newtheorem{proposition}[theorem]{Proposition}
\numberwithin{equation}{section}
\begin{document}

\title[Serre-Artin-Zhang-Verevkin theorem for semi-graded rings]{Noncommutative scheme theory and the Serre-Artin-Zhang-Verevkin theorem for semi-graded rings}

%    Remove any unused author tags.

%    author one information

\author{Andr\'es Chac\'on}
\address{Universidad Nacional de Colombia - Sede Bogot\'a}
\curraddr{Campus Universitario}
\email{anchaconca@unal.edu.co}
\thanks{}

%   author two information
\author{Armando Reyes}
\address{Universidad Nacional de Colombia - Sede Bogot\'a}
\curraddr{Campus Universitario}
\email{mareyesv@unal.edu.co}

\thanks{The authors were supported by the research fund of Faculty of Science, Code HERMES 53880, Universidad Nacional de Colombia - Sede Bogot\'a, Colombia.}

\subjclass[2020]{14A22; 16S80; 16U20; 16W60}

\keywords{Schematic algebra, Ore set, semi-graded ring}

\date{}

\dedicatory{Dedicated to Professor Lorenzo Acosta} 

\begin{abstract}

In this paper, we present a noncommutative scheme theory for the semi-graded rings generated in degree one defined by Lezama and Latorre \cite{LezamaLatorre2017} following the ideas about schematicness introduced by Van Oystaeyen and Willaert \cite{VanOystaeyenWillaert1995} for $\mathbb{N}$-graded algebras. With this theory, we prove the Serre-Artin-Zhang-Verevkin theorem \cite{ArtinZhang1994, EGAII1961, Hartshorne1977, Serre1955, Verevkin1992a, Verevkin1992} for several families of non-$\mathbb{N}$-graded algebras and finitely non-$\mathbb{N}$-graded algebras appearing in ring theory and noncommutative algebraic geometry. Our treatment contributes to the research on this theorem presented by Lezama \cite{Lezama2021, LezamaLatorre2017} from a different point of view.

\end{abstract}

\maketitle

\section{Introduction}

In his beautiful paper \cite{Serre1955}, Serre proved a theorem that describes the coherent sheaves on a projective scheme in terms of graded modules. Briefly, for $A$ a finitely generated commutative graded $\Bbbk$-algebra ($\Bbbk$ a field) and $X$ the associated projective scheme, if ${\rm coh}\ X$ denotes the category of coherent sheaves on $X$ and $\mathcal{O}_X(n)$ is the $n$th power of the twisting sheaf on $X$ \cite[p. 117]{Hartshorne1977}, we have a functor $\Gamma_*: {\rm coh}\ X \to {\rm qgr}\ A$ given by
\[
\Gamma_*(\mathcal{F}) = \bigoplus_{d = -\infty}^{\infty} {\rm H}^0(X, \mathcal{F}\otimes \mathcal{O}_X(d)).
\]

{\em Serre's theorem} \cite[Section 59, Proposition. 7.8, p. 252]{Serre1955}, \cite[3.3.5]{EGAII1961} and \cite[Proposition. II. 5.15]{Hartshorne1977}, asserts that if $A$ is generated over $\Bbbk$ by elements of degree one, then $\Gamma_*$ defines an equivalence of categories ${\rm coh}\ X\to {\rm qgr}\ A$.

\medskip

Artin and Zhang \cite{ArtinZhang1994} extended Serre's theorem to the noncommutative setting in the following way: let $A$ be an $\mathbb{N}$-graded algebra over a commutative Noetherian ring. They defined the associated projective scheme to be the pair ${\rm Proj}\ A = ({\rm qgr}\ A, \mathcal{A})$, where ${\rm qgr}\ A$ is the quotient category above and $\mathcal{A}$ is the object determined by the right module $A_A$. They showed the autoequivalence $s$ of ${\rm qgr}\ A$ defined by the shift of degrees in ${\rm gr}\ A$. The object $\mathcal{A}$ plays the role of the {\em structure sheaf} of ${\rm Proj}\ A$ and $s$ the role of the {\em polarization} defined by the projective embedding (this definition is the same as is given by Verevkin \cite{Verevkin1992a, Verevkin1992}). Since Serre's theorem does not hold for all commutative graded algebras, i.e., the functor defined by $\Gamma_*$ need not be an equivalence, Artin and Zhang's definition of ${\rm Proj}\ A$ is compatible with the classical definition for commutative graded rings only under some additional hypotheses, such as that $A$ is generated in degree one. In the literature, the noncommutative version of Serre's theorem is known as {\em Serre-Artin-Zhang-Verevkin theorem} \cite{ArtinZhang1994, Verevkin1992a, Verevkin1992}. Several authors have investigated results of commutative algebraic geometry, but now in the noncommutative setting following Artin, Zhang and Verevkin's ideas (e.g., \cite{CassidyVancliff2010, Lezama2020, Lezama2021, LezamaGomez2019, Rosenberg1995, Smith1991, Vancliff2015, VancliffVanRompay2000, Veerapen2017, ZhangZhang2008} and references therein).

\medskip

On the other hand, Manin \cite{Manin1991} commented the failure of attempts to obtain a noncommutative scheme theory \`a la Grothendieck for quantized algebras. Nevertheless, Van Oystaeyen and Willaert \cite{VanOystaeyenWillaert1995} studied this {\rm Proj} by developing a kind of scheme theory similar to the commutative theory. They noticed that this theory is possible only if the connected and $\mathbb{N}$-graded algebra considered contains \textquotedblleft enough\textquotedblright\ Ore sets. Algebras satisfying this condition are called {\em schematic}. They constructed a {\em generalized Grothendieck topology} for the free monoid on all Ore sets of a schematic algebra $R$, and defined a {\em noncommutative site} (c.f. \cite{VanOystaeyenWillaert1996a}) as a category with coverings on which sheaves can be defined, and formulated the Serre's theorem. With this, it was developed a sheaf theory which is similar to the scheme theory for commutative algebras. As a consequence of their treatment, an equivalence between the category of all coherent sheaves and the category ${\rm Proj}\ R$ was obtained in the sense of Artin \cite{Artin1992}. Some years later, Van Oystaeyen and Willaert \cite{VanOystaeyenWillaert1996a, vanOystaeyenWillaert1996, VanOystaeyenWillaert1997} presented a sequel of \cite{VanOystaeyenWillaert1995} in which they studied the cohomology of these algebras and proved a lifting property for Ore sets. This allowed to present many examples of schematic algebras like homogenizations of almost commutative algebras, Rees rings of universal enveloping algebras of Lie algebras, and three-dimensional Sklyanin algebras. A detailed treatment about schematic algebras can be found in Van Oystaeyen's book \cite{VanOystaeyen2000}.

\medskip

A few years ago, Lezama and Latorre \cite{LezamaLatorre2017} introduced the {\em semi-graded rings} with the aim of generalizing the $\mathbb{N}$-graded rings, the finitely $\mathbb{N}$-graded algebras and several algebras appearing in ring theory and noncommutative algebraic geometry which are not $\mathbb{N}$-graded algebras in a non-trivial sense. In that paper, they investigated some geometrical properties of semi-graded rings, within which is the Serre-Artin-Zhang-Verevkin theorem following Artin, Zhang and Verevkin's ideas (see also \cite{Lezama2021}). In this way, having in mind Van Oystaeyen and Willaert's ideas developed in \cite{VanOystaeyenWillaert1995} about a scheme theory for Proj in the setting of $\mathbb{N}$-graded algebras, it is natural to ask by a noncommutative scheme theory for semi-graded rings, and hence to investigate the {\em schematicness} of these objects in a more general context than $\mathbb{N}$-graded rings. This is the purpose of the paper. As expected, we generalize the results established in \cite{VanOystaeyenWillaert1995} for $\mathbb{N}$-graded algebras to the semi-graded setting (as a matter of fact, we do not impose the condition of connectedness of the algebra), and present another approach (Examples \ref{firstindep} and \ref{secondindep} show that the theory presented by Lezama \cite{Lezama2021, LezamaLatorre2017} and the one developed in this paper are independent) to the Serre-Artin-Zhang-Verevkin theorem for semi-graded rings which is a fundamental problem proposed for these objects  
\cite[Section 1.4 Problem 1]{Lezama2021}. 

\medskip

The article is organized as follows. In Section \ref{schematicalgebrasmotivation}, we recall some key facts about torsion theory, Serre's theorem in the commutative case and the noncommutative setting of $\mathbb{N}$-graded rings following the ideas presented by Artin and Zhang \cite{ArtinZhang1994}, and Van Oystaeyen and Willaert \cite{VanOystaeyenWillaert1995, VanOystaeyenWillaert1997}. Next, in Section \ref{localizationsemi-gradedrings}, we consider some preliminaries about semi-graded rings and semi-graded modules, and present some facts about the localization of these objects. In Section \ref{schematicness}, we formulate the notion of {\em schematicness} (Definition \ref{def.schematic}) for semi-graded rings without the assumption of connectedness established by Van Oystaeyen and Willaert for $\mathbb{N}$-graded rings. Section \ref{section-serre-theorem} contains the definition of noncommutative site with the aim of establishing the Serre-Artin-Zhang-Verevkin theorem in the semi-graded setting (Theorem \ref{Serre-theo}). Our results generalize those corresponding in the case of $\mathbb{N}$-graded rings (Remark \ref{we-generalize}) and allow to guarantee that other non-$\mathbb{N}$-graded algebras (even not connected) to be schematic (Example \ref{examplesillustrating}). As we said above, 
Examples \ref{firstindep} and \ref{secondindep} show that the theory presented by Lezama about Serre-Artin-Zhang-Verevkin theorem and the one developed in this paper are independent. Finally, in Section \ref{conclusionsfuturework}, we present some ideas for a future work that are motivated by different topics concerning schematic algebras \cite{VanOystaeyen2000, vanOystaeyenWillaert1996, VanOystaeyenWillaert1997, Willaert1998}.

\medskip

Throughout the paper, the term ring means an associative ring with identity not necessarily commutative. The letter $\Bbbk$ denotes an arbitrary field, and all algebras are $\Bbbk$-algebras. The symbols $\mathbb{N}$ and $\mathbb{Z}$ denote the set of natural numbers including zero, and the ring of integer numbers, respectively. For a ring $R$ and a subset $I$ of $R$, $I\vartriangleleft_l R$ means that $I$ is a left ideal of $R$. $Z(R)$ denotes the center of $R$, while the category of left $R$-modules is written as $R-{\rm Mod}$.

\section{Serre's theorem and schematic algebras}\label{schematicalgebrasmotivation}

We recall briefly some notions of algebraic geometry which are key in the proof of {\em Serre's theorem}.

\medskip

Following Hartshorne \cite{Hartshorne1977}, if $C = \Bbbk \oplus C_1 \oplus C_2 \oplus \dotsb$ is a positively graded commutative Noetherian ring generated in degree one, consider $Y = {\rm Proj}\ C$ and $Y(f) = \{\mathfrak{p}\in Y\mid f \notin \mathfrak{p}\}$, the Zariski open set corresponding to a homogeneous element $f\in C$. It is well-known that there is a finite subset $\{f_i\mid f_i\in C_1\}$ such that $Y = \bigcup_i Y(f_i)$. Equivalently, for every choice of $d_i\in \mathbb{N}$, there exists $n\in \mathbb{N}$ with $(C_{+})^n \subseteq \sum_i Cf_i^{d_i}$. In this way, for any finitely generated graded $C$-module $M$ we have
\begin{align*}
    \Gamma_*(M) := &\ \bigoplus_{n\in \mathbb{Z}} \Gamma(Y, \widetilde{M(n)})\\
    = &\ Q_{\kappa_{+}} (M) =  \lim_{{\overleftarrow{\ \ i\ \ }}} Q_{f_i}(M),
\end{align*}

where $\widetilde{M(n)}$ denotes the sheaf of modules associated to the shifted module $M(n)$ and $Q_{f_i}(M)$ is the localization of $M$ at $\{1, f_i, f_i^2,\dotsc \}$. Of course,
\[
Q_f(M) = \lim_{{\overleftarrow{\ \ i\ \ }}} Q_{ff_i}(M),
\]
where the inverse systems are defined as $q\le h$ if and only if $Y(g) \subseteq Y(h)$. This is precisely the key fact to prove {\em Serre's theorem}: the category of coherent $\mathcal{O}_Y$-modules is equivalent with a certain quotient category. 

\medskip 

In the noncommutative setting, for a {\em noncommutative positively graded Noetherian} $\Bbbk$-algebra $R = \Bbbk \oplus R_1 \oplus R_2 \oplus \dotsb $ with $R = \Bbbk[R_1]$ (notice that $R$ is connected, that is, $R_0 = \Bbbk$), Van Oystaeyen and Willaert \cite{VanOystaeyenWillaert1995} presented his interpretation of Serre's theorem for algebras with enough Ore sets called {\em schematic algebras}. With the aim of presenting the key ideas developed by them, we start by recalling some notions of torsion theory that we will use freely throughout the paper. For more details, we refer to Goldman \cite{Goldman}, Stenstrom \cite{Stenstrom1975} or Van Oystaeyen \cite{VanOystaeyen1978}.

\begin{definition}[{\cite[Section 2]{VanOystaeyenWillaert1995}}]
   Let $\mathscr{L}$ be a set of left ideals of an arbitrary ring $R$. $\mathscr{L}$ is said to be {\em a filter} if it satisfies the following conditions:
    \begin{enumerate}
        \item [$T_1$:] If $I\in \mathscr{L}$ and $I\subseteq J$, then $J\in \mathscr{L}$.
        \item [$T_2$:] If $I,J\in \mathscr{L}$, then $I\cap J\in \mathscr{L}$.
        \item [$T_3$:] If $I\in \mathscr{L}$ and $a\in R$, then $(I:a) := \{r\in R \mid ra\in I\}\in \mathscr{L}$.
        \end{enumerate} 
\end{definition}
      
The functor $\kappa:R-{\rm Mod}\to R-{\rm Mod}$ defined by
\[
\kappa(M) = \{m\in M\mid \ {\rm there\ exists}\ I\in \mathscr{L}\ {\rm with}\ Im = 0\},
\]
is a {\em left exact preradical}, that is, a left exact subfunctor of the identity functor on the category $R-{\rm Mod}$. A module $M$ satisfying $\kappa(M) = M$ is called a $\kappa$-{\em torsion module}, and if $\kappa(M) = 0$, then $M$ is said to be a $\kappa$-{\em torsion-free module}. It is straightforward to see that the family of torsion modules are closed under quotient objects and coproducts, while the torsion-free modules are closed under subobjects and products.
        
\medskip

The filter $\mathscr{L}$ is called {\em idempotent} (also called a {\em Gabriel topology}) when it satisfies the following condition:
\begin{enumerate}
\item [$T_4$:] If $I\vartriangleleft_l R$ and there exists $J\in \mathscr{L}$ such that for all $a \in J$ the relation $(I:a)\in \mathscr{L}$ holds, then $I\in \mathscr{L}$. 
\end{enumerate}
    
Condition $T_4$ implies that $\mathscr{L}$ is closed under products and that the functor $\kappa$ is a radical, that is, $\kappa(M / \kappa(M)) = 0$, for all $M\in R-{\rm Mod}$. 

\begin{proposition}\label{Noetherianidempotentfilter}
    If $R$ is a left Noetherian ring and $J_1\supseteq  J_2 \supseteq \dotsb$ is a descending chain of two-sided ideals of $R$, then the set 
    \[
    \mathscr{A}=\{I\vartriangleleft_l R \mid  \ {\rm there\ exist\ elements}\ n,m \in \mathbb{N}\ {\rm with}\ (J_m)^n\subseteq I\},
    \]
is an idempotent filter.
\begin{proof}
        \begin{enumerate}
            \item [\rm $T_1$:] If $(J_m)^n\subseteq I$ and $I\subseteq I'$, then it is clear that $(J_m)^n\subseteq I'$.
            \item [\rm $T_2$:] If $I, I'\in \mathscr{A}$, then there exist elements $n_1,n_2,m_1,m_2\in \mathbb{N}$ such that $(J_{m_1})^{n_1}\subseteq I$ and $(J_{m_2})^{n_2}\subseteq I'$. If we consider $n := {\rm max}\{n_1,n_2\}$ and $m := {\rm max}\{m_1,m_2\}$, then it follows that $(J_m)^n\subseteq I\cap I'$.
            \item [\rm $T_3$:] If $I\in \mathscr{A}$, then there exist elements $n,m\in\mathbb{N}$ such that $(J_m)^n\subseteq I$. Fix $a\in R$ and let $r\in (J_m)^n$. Since $(J_m)^n$ is an ideal of $R$, then $ra\in (J_m)^n\subseteq I$,  and so $r\in (I:a)$, that is, $(J_m)^n\subseteq (I:a)$.
            \item [\rm $T_4$:] Let $I\vartriangleleft_l R$ and $J \in \mathscr{A}$ such that for all $a\in J$, $(I:a)\in \mathscr{A}$. Since $J\in \mathscr{A}$, there exist elements $n,m\in \mathbb{N}$ with $(J_m)^n\subseteq J$. By assumption $R$ is left Noetherian, so $(J_m)^n$ is finitely generated by some elements $a_1,\dots,a_l$. Notice that $a_i \in J$, for $1\leq i\leq l$, whence $(I:a_i)\in \mathscr{A}$. In this way, there exist elements $k_i,j_i\in \mathbb{N}$ such that $(J_{j_i})^{k_i}\subseteq (I:a_i)$. If $k := {\rm max}\{k_i\}_{1\le i\le l}$ and $j := {\rm max}\{j_i\}_{1\le i \le l}$, then $(J_j)^k\subseteq (I:a_i)$, for every $1\leq i\leq l$.

            Let $r\in (J_j)^k$ and $s\in (J_m)^n$. There exist elements $r_1,\dots, r_l\in R$ such that $s=\sum_{i=1}^lr_ia_i$, and so  $rs=\sum_{i=1}^lrr_ia_i$. Since $(J_j)^k$ is an ideal of $R$, we have that $rr_i\in (J_j)^k$, for all $i$. Thus $rr_ia_i\in I$, whence $rs\in I$. It follows that $(J_{j+m})^{k+n}\subseteq (J_j)^k(J_m)^n\subseteq I$.
        \end{enumerate}
    \end{proof}
\end{proposition}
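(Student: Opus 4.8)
The plan is to verify the four closure conditions $T_1$–$T_4$ directly from the description of $\mathscr{A}$, leaning throughout on two elementary monotonicity facts: since the chain is descending, $m'\ge m$ forces $J_{m'}\subseteq J_m$; and since the powers of a two-sided ideal form a descending chain, $n'\ge n$ forces $(J_m)^{n'}\subseteq (J_m)^n$. Combined, these say that a single witness $(J_m)^n\subseteq I$ remains a witness after replacing $(m,n)$ by any $(m',n')$ with $m'\ge m$, $n'\ge n$, which is exactly what allows me to ``align indices'' whenever I must combine finitely many witnesses.

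For $T_1$, if $(J_m)^n\subseteq I\subseteq J$ there is nothing to prove. For $T_2$, given witnesses $(J_{m_i})^{n_i}\subseteq I_i$ for $i=1,2$, I set $m:=\max\{m_1,m_2\}$ and $n:=\max\{n_1,n_2\}$ and conclude $(J_m)^n\subseteq I_1\cap I_2$ by the monotonicity remark. For $T_3$, given $(J_m)^n\subseteq I$ and $a\in R$, I use that $(J_m)^n$ is a (two-sided, hence right) ideal: for $r\in (J_m)^n$ we have $ra\in (J_m)^n\subseteq I$, so $r\in (I:a)$; thus $(J_m)^n\subseteq (I:a)$ and $(I:a)\in\mathscr{A}$.

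The substantive case is $T_4$, and this is where left Noetherianity is used. Suppose $I\vartriangleleft_l R$ and there is $J\in\mathscr{A}$ with $(I:a)\in\mathscr{A}$ for every $a\in J$; fix $n,m$ with $(J_m)^n\subseteq J$. Since $R$ is left Noetherian, $(J_m)^n$ is finitely generated as a left ideal, say $(J_m)^n=Ra_1+\cdots+Ra_l$. Each $a_i$ lies in $(J_m)^n\subseteq J$, so $(I:a_i)\in\mathscr{A}$; picking witnesses $(J_{j_i})^{k_i}\subseteq (I:a_i)$ and maximizing, I obtain one pair $(j,k)$ with $(J_j)^k\subseteq (I:a_i)$ for all $i$. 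I then claim $(J_j)^k(J_m)^n\subseteq I$: any $s\in (J_m)^n$ may be written $s=\sum_i r_i a_i$ with $r_i\in R$, so for $r\in (J_j)^k$ one has $rs=\sum_i (rr_i)a_i$ with $rr_i\in (J_j)^k$ (right ideal), hence $(rr_i)a_i\in (J_j)^k a_i\subseteq (I:a_i)\,a_i\subseteq I$, giving $rs\in I$. Finally, choosing the index $j+m$ (which sits inside both $J_j$ and $J_m$) and exponent $k+n$, the inclusion $(J_{j+m})^{k+n}\subseteq (J_j)^k(J_m)^n\subseteq I$ shows $I\in\mathscr{A}$.

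I do not expect any obstacle beyond the index bookkeeping in $T_4$; the one genuine ingredient is the finite generation of $(J_m)^n$, which is what reduces the hypothesis ``$(I:a)\in\mathscr{A}$ for all $a\in J$'' to finitely many conditions that can be intersected and then multiplied back against $(J_m)^n$. The concluding sentence that $T_4$ makes $\kappa$ a radical and $\mathscr{A}$ closed under products is then immediate from the general torsion-theoretic facts recalled before the statement, so nothing further is needed.
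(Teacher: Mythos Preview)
Your proof is correct and follows essentially the same approach as the paper: the verifications of $T_1$--$T_3$ are identical, and in $T_4$ you use left Noetherianity to finitely generate $(J_m)^n$, align the finitely many witnesses by taking maxima, show $(J_j)^k(J_m)^n\subseteq I$ via the same decomposition $s=\sum r_ia_i$, and conclude with $(J_{j+m})^{k+n}\subseteq (J_j)^k(J_m)^n$. The only difference is expository: you make the monotonicity-in-$(m,n)$ principle explicit up front, whereas the paper uses it tacitly.
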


Consider a ring $R$, $\mathscr{L}$ an idempotent filter of left ideals of $R$ and its associated radical $\kappa$. For an $R$-module $M$, we recall the quotient module $Q_\kappa(M)$ of $M$ (Definition \ref{quotientmoduleMwrtQ}). With this aim, we introduce the Definition \ref{relationpairs}.

\begin{definition}\label{relationpairs}
    Let $M\in R-{\rm Mod}$. Consider the family $\Omega_M$ of pairs $(I,f)$ with $I\in \mathscr{L}$ and $f:I\rightarrow M$ an $R$-homomorphism. We define the relation $\sim$ on $\Omega_M$ as $(I_1,f_1) \sim (I_2,f_2)$ if and only if there exists an element $J\in \mathscr{L}$ such that $J\subseteq I_1\cap I_2$ and $f|_{J}=g|_{J}$.
\end{definition}

It is straightforward to see that $\sim$ is an equivalence relation. The equivalence class of the element $(I,f)$ is denoted as $[I,f]$, and the set of equivalence classes will be written as $M_{\mathscr{L}}$. For two elements $[I,f], [J,g]\in M_{\mathscr{L}}$, we define their sum as $[I,f]+[J,g] = [I\cap J,f+g]$. It is easy to see that this sum is well defined and that $(M_{\mathscr{L}}, +)$ is an Abelian group.

\medskip

It is also easy to see that if $I, J\in\mathscr{L}$ and $f\in {\rm Hom}(I,R)$, then $f^{-1}(J)\in \mathscr{L}$. In this way, when we take elements $[I,f]\in R_\mathscr{L}$ and $[J,g]\in M_\mathscr{L}$, we can define the product of these elements as $[I,f]\cdot [J,g] = [f^{-1}(J),g\circ f]$. Notice that this product is well defined, and so $R_\mathscr{L}$ is actually a ring with identity $[R,{\rm id}_R]$. Thus, $M_\mathscr{L}$ is a left $R_\mathscr{L}$-module.

\medskip

Let $m\in M$. We define the application $\beta(m):R\rightarrow M$ given by $\beta(m)(r)=rm$. It is well-known that $\beta:M\rightarrow {\rm Hom}(R,M)$ is an isomorphism of $R$-modules. If we consider $\varphi_M:M\rightarrow M_\mathscr{L}$ defined by  $\varphi_M(m)=[R,\beta(m)]$, then it follows that $\varphi_R$ is a ring homomorphism, and so we can consider $R_\mathscr{L}$ and $M_\mathscr{L}$ as $R$-modules with the action given by $r[I,f] := [R,\beta(r)][I,f]$. Note that $\varphi_M$ is actually a homomorphism of $R$-modules. Since ${\rm Ker}(\varphi_M)=\kappa(M)$, the fundamental isomorphism theorem guarantees that $\varphi_M(M)\cong M/\kappa(M)$. In this way, if $\kappa(M) = 0$ then we can embed $M$ into $M_\mathscr{L}$. 

\medskip

For an element $\xi\in M_\mathscr{L}$ given by $\xi = [I,f]$, and an element $a\in I$, notice that  $a\xi=[R,\beta(f(a))]=\varphi_M(f(a))$, which shows that $I\xi\subseteq \varphi_M(M)$, that is, ${\rm Coker}(\varphi_M)$ is a torsion module in $\mathscr{L}$.  

\medskip

Considering the notation and terminology above, we present the definition of the quotient module of an object  $M$ in $R-{\rm Mod}$.

\begin{definition}\label{quotientmoduleMwrtQ}
The {\em quotient module of} $M$ with respect to $\kappa$ is defined as $Q_\kappa(M)=(M/\kappa(M))_\mathscr{L}$. Since $\mathscr{L}$ is idempotent, it follows that $\kappa\left(M/\kappa(M) \right)=0$. Hence, we can embed $M/\kappa(M)$ into $Q_\kappa(M)$.
\end{definition}

Equivalently, the {\em quotient module} of $M$ with respect to $\kappa$ is given by
\[
Q_{\kappa}(M) = \varinjlim_{I\in \mathscr{L}} {\rm Hom}_R (I, M/\kappa(M)).
\]
$Q_{\kappa}(M)$ turns out to be a module over the ring $Q_{\kappa}(R)$.

\medskip

Following \cite{Stenstrom1975}, recall that an $R$-module $E$ is $\kappa$-{\em injective} if for every $R$-module $M$ and each submodule $N$ such that $\kappa(M/N) = M/N$, every $R$-homomorphism $N\rightarrow E$ can be extended to an $R$-homomorphism $M\rightarrow E$. We say that $E$ is $\kappa$-{\em closed} (also known as {\em faithfully}  $\kappa$-{\em injective}) if the extension of the homomorphism is unique. It is straightforward to see that $E$ is $\kappa$-closed if and only if $E$ is $\kappa$-injective and $\kappa$-torsion-free. By using these notions, we can characterize $Q_\kappa(M)$ in the following way: $Q_\kappa(M)$ is the unique $\kappa$-closed module containing $N = M/\kappa(M)$ such that $Q_\kappa(M)/N$ is $\kappa$-torsion. 

 \begin{example}[{\cite[p. 111]{VanOystaeyenWillaert1995}}]
   \begin{enumerate}
       \item [(i)] Consider $S$ a left Ore set in an arbitrary ring $R$. The set
    \[
\mathscr{L}(S) = \{I\vartriangleleft_l R\mid I\cap S\neq \emptyset\}
    \]
    is an idempotent filter. If $\kappa_S$ denotes its corresponding radical and $Q_S(M)$ is the module of quotients of $M$, then it is straightforward to see that $Q_S(M)$ is isomorphic to $S^{-1}M$, i.e., the classical Ore localization of $M$ at $S$.

    \item [(ii)] If $R=\bigoplus_{k\ge 0}R_k$ is a positively graded Noetherian ring and $R_+$ denotes the two-sided ideal $\bigoplus_{k\ge 1}R_k$, by Proposition \ref{Noetherianidempotentfilter}, the set
    \[ 
\mathscr{L}(\kappa_+) = \{I\vartriangleleft_l R\mid {\rm there\ exists}\ n\in \mathbb{N}\ {\rm with}\ (R_{+})^n\subseteq I\}
    \]
    is an idempotent filter. The corresponding radical is denoted by $\kappa_+$.
   \end{enumerate}
\end{example}

From the treatment above, having in mind that the filter $\mathscr{L}(\kappa_{+})$ is idempotent, Van Oystaeyen and Willaert \cite{VanOystaeyenWillaert1995} formed the {\em quotient category} $(R, \kappa_{+})$-gr, that is, the full subcategory of $Q_{\kappa_{+}}(R)$-gr consisting of modules of the form $Q_{{\kappa}_{+}}(M)$ for some graded $R$-module $M$. Notice that $(R, \kappa_{+})$-gr is equivalent to the full subcategory of 
$R$-gr consisting of the $\kappa_{+}$-closed modules and define ${\rm Proj}\ R$ as the Noetherian objects in  $(R, \kappa_{+})$-gr. Since they wanted to describe the objects of ${\rm Proj}\ R$ by means of objects of usual module categories in the same way as for commutative algebras, they need modules determined by Ore localizations. This is the content of the following definition.

%\medskip

%There are three equivalent ways to define ${\rm Proj}\ R$ \cite[Section 3]{VanOystaeyenWillaert1995}:
%\begin{enumerate}
%    \item [\rm (i)] The Noetherian objects in $(R, \kappa_{+})$-gr, i.e., those objects of $(R, \kappa_{+})$-gr that satisfy the ascending chain condition on subobjects.
%    \item [\rm (ii)] The full subcategory of $Q_{\kappa_{+}}(R)$-gr consisting of all modules of the form $Q_{\kappa_+}(M)$ for some finitely generated $R$-module $M$.
%    \item [\rm (iii)] The full subcategory of $R$-gr consisting of all $\kappa_{+}$-closed modules which are torsion over a finitely generated $R$-submodule.
%\end{enumerate}

%Van Oystaeyen and Villaert \cite{VanOystaeyenWillaert1995} presented a description of the objects of ${\rm Proj}\ R$ by using objects of module categories in the same way as for commutative objects, that is, they considered modules to be determined by Ore localizations. This fact suggested the definition of {\em schematic algebra}. Recall that a {\em non-trivial} ({\em left}) {\em Ore set} of $R$ is a homogeneous (left) Ore set $S$ of $R$ such that $1\in S,\ 0\notin S$, and $S\not\subseteq R_0$. For further details about localization in Noetherian rings, see Jategaonkar's  excellent book \cite{Jategaonkar1986}.

\begin{definition}[{\cite[Definition 1]{VanOystaeyenWillaert1995}}]\label{schematicgradedsetting}
    The noncommutative positively graded Noetherian $\Bbbk$-algebra $R = \Bbbk \oplus R_1 \oplus R_2 \oplus \dotsb $ with $R = \Bbbk[R_1]$ is {\em schematic} if there is a finite set $I$ of homogeneous left Ore sets of $R$ such that for every $S\in I,\ S\cap R_+\neq \emptyset$, and such that one of the following equivalent properties is satisfied:
    \begin{enumerate}
        \item [\rm (i)] for each $(r_S)_{S\in I}\in \prod_{S\in I}S$, there exists $m\in \mathbb{N}$ such that $(R_{+})^{m}\subseteq \sum_{S\in I}Rr_S$,
        \item [\rm (ii)] $\bigcap_{S\in I}\ \mathscr{L}(S) = \mathscr{L}(\kappa_{+})$,
        \item [\rm (iii)] $\bigcap_{S\in I}\kappa_S(M) = \kappa_{+}(M)$, for all $M\in R-{\rm Mod}$,
        \item [\rm (iv)] $\bigwedge_{S\in I}\kappa_S = \kappa_{+}$ where $\bigwedge$ denotes the infimum of torsion theories.
        \end{enumerate}
        \end{definition}

In \cite {VanOystaeyenWillaert1995, VanOystaeyenWillaert1996a}, Van Oystaeyen and Willaert constructed the noncommutative site, a category with coverings on which sheaves can be defined, and formulated the Serre's theorem. Examples \ref{almostcommutativealgebras} and \ref{VanOystaeyenWillaert1997Example2} contain remarkable examples of schematic algebras.

\begin{example}\label{almostcommutativealgebras}
Recall that if $R$ is a {\em positively} filtered $\Bbbk$-algebra by the family $(F_nR)_{n\ge 0}$ (i.e., $F_0R = \Bbbk$), $\sigma: R\to G(R)$ is the principal symbol map, and $\widehat{R}$ is the Rees-ring of $R$, it is well-known that $G(R)$ and $\widehat{R}$ are positively graded and there is a canonical central element $X$ in $\widehat{R}$ of degree $1$ such that $\widehat{R} / \langle X\rangle \cong G(R)$. If $\widehat{R}$ is Noetherian, this is equivalent to $G(R)$ being Noetherian or the filtration of $R$ being Zariskian. Notice that if $S$ is a multiplicatively set in $R$ such that $\sigma(S)$ is a multiplicative set in $G(R)$, then $\widehat{R} =  \{sX^{{\rm deg} \sigma(S)}\mid s\in S\}$ is a multiplicative set (consisting of homogeneous elements) in $\widehat{R}$. For more details about graded rings, Zariskian filtrations and Rees rings, see Li and Van Oystaeyen's book  \cite{LiVanOystaeyen1996}.

\medskip

For $R$ positively filtered by $(F_nR)_{n\ge 0}$, if $G(R)$ is schematic then $\widehat{R}$ is schematic \cite[Theorem 1]{VanOystaeyenWillaert1997}. In this way, since for an almost commutative ring $R$ there exists a filtration on $R$ such that $G(R)$ is commutative, it follows that its Rees-ring is schematic. For example, the algebra $R$ generated by three elements $x, y$ and $z$ of degree 1 with relations $xy - yx = z^2,\ xz - zx = 0$, and $yz - zy = 0$, is schematic since it is the Rees-ring of the first Weyl algebra $A_1(\Bbbk)$ with respect to the Bernstein-filtration (this algebra is known as the {\em homogenized Weyl algebra}). 
\end{example}

Van Oystaeyen and Willaert \cite[p. 199]{VanOystaeyenWillaert1997} said that \textquotedblleft it is probably not true that the class of schematic algebras is closed under iterated Ore extensions since Ore sets in a ring $R$ need not be Ore in an Ore extension $R[x;\sigma, \delta]$\textquotedblright. Nevertheless, the following proposition shows that under suitable conditions, these extensions are schematic.

\begin{proposition}[{\cite[Theorem 3]{VanOystaeyenWillaert1997}}]\label{VanOystaeyenWillaert1997Theorem3}
Given a positively graded ring $R$ which is generated by $R_1$ and which is schematic by means of Ore sets $S_i$, given $\sigma$ a graded automorphism of $R$ and $\delta$ a $\sigma$-derivation of degree $1$, then for all $s_i\in \prod S_i$ and for all $m\in \mathbb{N}$, there exists $p\in \mathbb{N}$ such that
\[
(R[x;\sigma, \delta]_{+})^{p} \subseteq M := \sum_{i} R[x;\sigma, \delta]s_i + R[x;\sigma, \delta]x^m,
\]
where $R[x;\sigma, \delta]$ denotes the Ore extension considered with graduation $(Rx[x;\sigma, \delta])_n = \bigoplus_{k = 0}^{n} R_kx^{n-k}$.
\end{proposition}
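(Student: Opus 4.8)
The plan is to reduce the claim to the schematicness hypothesis on $R$ by carefully analyzing how the graded pieces of $R[x;\sigma,\delta]$ decompose relative to those of $R$. First I would fix $s_i \in \prod_i S_i$ and $m \in \mathbb{N}$, and observe that since $R$ is schematic by means of the $S_i$, property (i) of Definition \ref{schematicgradedsetting} gives some $q \in \mathbb{N}$ with $(R_+)^q \subseteq \sum_i R s_i$. The goal is then to find $p$ such that every homogeneous element of $(R[x;\sigma,\delta]_+)^p$ of degree $\geq p$ lies in $M = \sum_i R[x;\sigma,\delta]s_i + R[x;\sigma,\delta]x^m$. A homogeneous element of degree $n$ in $R[x;\sigma,\delta]$, under the stated graduation, is a sum of terms $r_k x^{n-k}$ with $r_k \in R_k$; when $n$ is large, either the $x$-degree $n-k$ is at least $m$ (so the term lies in $R[x;\sigma,\delta]x^m$ after commuting $x^m$ past nothing, since it is already of the form $r_k x^{n-k} = r_k x^{n-k-m} x^m$), or else $k > n-m$, forcing $r_k \in R_k$ with $k$ large, so $r_k \in (R_+)^q \subseteq \sum_i R s_i$ once $k \geq q$.

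The key technical step will be pushing the factorization $r_k = \sum_i t_i s_i$ through to a statement about $R[x;\sigma,\delta]$-left-ideal membership: I must write $r_k x^{n-k} = \sum_i t_i s_i x^{n-k}$ and then move $s_i$ to the \emph{left} past $x^{n-k}$, which is \emph{not} free because $x s = \sigma(s) x + \delta(s)$. So I would instead argue in the other direction: $s_i x^{n-k}$ is already $s_i \cdot x^{n-k}$, but what I want is an expression in $R[x;\sigma,\delta]s_i$, i.e. ending in $s_i$ on the right. This forces me to commute $s_i$ rightward through $x^{n-k}$ using the relation repeatedly, which expresses $x^{n-k} s_i$ as $\sigma^{n-k}(s_i) x^{n-k} + (\text{lower-order-in-}x\text{ terms})$. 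Rearranging, $\sigma^{n-k}(s_i) x^{n-k} = x^{n-k} s_i - (\text{lower order})$, and since $\sigma$ is a graded automorphism, $\sigma^{n-k}(s_i)$ is again a unit-scaled generator-type element — more precisely, applying $\sigma^{-(n-k)}$ shows every $r_k x^{n-k}$ with $r_k \in \sum_i R\sigma^{n-k}(s_i)$ lies in $R[x;\sigma,\delta]$-combinations of the $s_i x^{\bullet}$ plus strictly lower $x$-degree terms. The lower-order terms in $x$ are then handled by downward induction on the $x$-degree, with the base case ($x$-degree $0$) being an element of $R$ of large degree, handled directly by $(R_+)^q \subseteq \sum_i R s_i$.

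Concretely, I would set up an induction on $n = \deg$ (the total degree in $R[x;\sigma,\delta]$) with the inductive claim: for $n$ sufficiently large, every homogeneous element of degree $n$ lies in $M$. For a term $r_k x^{n-k}$: if $n - k \geq m$ it is in $R[x;\sigma,\delta]x^m$ and we are done with that term; if $n-k < m$, then $k > n-m$, so for $n$ large, $k \geq q$, hence $r_k \in (R_+)^q \subseteq \sum_i Rs_i$, and commuting $s_i$ rightward through $x^{n-k}$ (absorbing the $\sigma^{\pm}$ twists, which preserve degrees and Ore-generator status up to the already-established factorization) reduces $r_k x^{n-k}$ modulo $M$ to a sum of homogeneous elements of degree $n$ but strictly smaller $x$-degree; iterating drives the $x$-degree to zero, landing in $R_n \subseteq (R_+)^q \subseteq \sum_i Rs_i \subseteq M$. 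Tracking the bookkeeping, $p$ can be taken to be any integer with $p \geq q + m$ (so that a product of $p$ elements of $R[x;\sigma,\delta]_+$ is a sum of homogeneous pieces of degree $\geq p \geq q+m$, guaranteeing the above dichotomy applies termwise).

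The main obstacle is precisely the non-commutativity of $x$ with elements of $R$: the naive "factor through $\sum_i Rs_i$" argument would work verbatim in a polynomial extension, but here every rearrangement of $s_i$ past powers of $x$ generates a tail of lower-$x$-degree error terms, and one must verify that these errors stay inside $M$ (or inside the inductive hypothesis) rather than escaping it. The saving feature is that $\sigma$ is a \emph{graded} automorphism and $\delta$ raises $x$-degree while lowering nothing below what the induction already controls, so the errors are genuinely "smaller" in the well-founded order (total degree fixed, $x$-degree decreasing), and $\sigma^{j}(s_i) \in S_i$-\emph{type} elements still generate the same large power of $R_+$ because $\sigma$ permutes graded components. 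Making this last point rigorous — that $\sigma^j$ applied to the factorization $(R_+)^q \subseteq \sum_i Rs_i$ again yields $(R_+)^q \subseteq \sum_i R\sigma^j(s_i)$, uniformly in $j$ — is the crux, and follows since $\sigma(R_+) = R_+$ and $\sigma$ is a ring automorphism.
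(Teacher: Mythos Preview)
The paper does not prove this proposition at all: it is quoted verbatim from \cite[Theorem 3]{VanOystaeyenWillaert1997} and used only as background to list examples of schematic algebras. There is therefore no ``paper's own proof'' to compare against.

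On its own merits your sketch is essentially correct, and it is in fact close to how the original reference argues. A few points deserve tightening. First, your phrase ``$\delta$ raises $x$-degree'' is backwards: the commutation $a x = x\,\sigma^{-1}(a) - \delta(\sigma^{-1}(a))$ trades one power of $x$ for an increase of one in the $R$-degree, so the error terms have \emph{lower} $x$-degree and \emph{higher} $R$-degree; this is exactly what your downward induction on $x$-degree needs, but say it that way. Second, you write ``$\sigma^{j}(s_i)\in S_i$-type elements'', which is misleading: there is no reason $\sigma^{j}(s_i)$ stays in the Ore set $S_i$. What you actually use --- and what suffices --- is that applying the graded automorphism $\sigma^{j}$ to the inclusion $(R_+)^{q}\subseteq\sum_i R s_i$ yields $(R_+)^{q}\subseteq\sum_i R\,\sigma^{j}(s_i)$, because $\sigma(R_+)=R_+$; make that explicit. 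Third, the clean induction statement is: for each $0\le j<m$ and each $r\in R_{n-j}$ with $n-j\ge q$, one has $r x^{j}\in M$; the base case $j=0$ is immediate, and the inductive step follows from writing $r=\sum_i u_i\,\sigma^{j}(s_i)$ and expanding $\sigma^{j}(s_i)x^{j}=x^{j}s_i+(\text{terms of }x\text{-degree}<j)$, with the error terms landing in $R_{n-j'}x^{j'}$ for $j'<j$. With those clarifications the choice $p=q+m$ works exactly as you claim.
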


Proposition \ref{VanOystaeyenWillaert1997Theorem3} is one of the results that Van Oystaeyen and Willaert \cite{VanOystaeyenWillaert1997} used  to show that the algebras in Example \ref{VanOystaeyenWillaert1997Example2} are schematic.  

\begin{example}[{\cite[Examples 2-5]{VanOystaeyenWillaert1997}}]\label{VanOystaeyenWillaert1997Example2}
\begin{itemize}
    \item [\rm (i)] The {\em coordinated ring of quantum $2\times2$-matrices} $\mathscr{O}_q(M_2(\mathbb{C}))$ with $q\in \mathbb{C}$ is generated by elements $a, b, c$ and $d$ subject to the relations
    \begin{align*}
        ba&=q^{-2}ab,  &  ca &= q^{-2}ac, &  bc&=cb,\\
        db&=q^{-2}bd,  &  dc&=q^{-2}cd,  &  ad-da&=(q^2-q^{-2})bc.
    \end{align*}
    \item [\rm (ii)] {\em Quantum Weyl algebras} $A_n^{\overline{q}, \Lambda}$ defined by Alev and Dumas \cite{AlevDumas1994} are given by an $n\times n$ matrix $\Lambda=(\lambda_{ij})$ with $\lambda_{ij}\in\Bbbk^*$ and a row vector $\overline{q}=(q_1,\dots,q_n)$, where $q_i\neq 0$ for every $i$, the algebra is generated by elements $x_1,\dots,x_n,y_1,\dots,y_n$ subject to relations ($i<j$) given by
    \begin{align*}
        x_ix_j & = \mu_{ij}x_jx_i, & x_iy_j & = \lambda_{ji}y_jx_i, & y_jy_i & = \lambda_{ji}y_iy_j,\\
        x_jy_i & = \mu_{ij}y_ix_j & x_jy_j & = 1+q_jy_jx_j+\sum_{i<j}(q_i-1)y_ix_i,
    \end{align*}
    where $\mu_{ij}=\lambda_{ij}q_i$.
    \item [\rm (iii)] {\em Three dimensional Sklyanin algebras} $A_{\Bbbk}$ {\em over a field} $\Bbbk$ according to Artin et al.  \cite{ArtinTateVandenBergh1990} are graded $\Bbbk$-algebras generated by three homogeneous elements $x, y$ and $z$ of degree 1 satisfying the relations
     \begin{equation*}
        axy+byx+cz^2=0, \quad ayz+bzy+cx^2=0, \quad {\rm and}\quad  azx+bxz+cy^2=0,
    \end{equation*}
      where $a,b,c\in \Bbbk$.
      \item [\rm (iv)] Color Lie super algebras defined by Rittenberg and Wyler \cite{Rittenberg}.
\end{itemize}
\end{example}

\begin{remark}
Of course, there are examples of non-schematic algebras. If we take the graded algebra $\Bbbk \{x, y\}/\langle yx - xy - x^2\rangle$ and suppose that ${\rm char}(\Bbbk) = 0$, then its subalgebra generated by $y$ and $xy$ is not left schematic \cite[p. 203]{VanOystaeyenWillaert1997}.
\end{remark}

\section{Semi-graded rings}\label{localizationsemi-gradedrings}

Lezama and Latorre \cite{LezamaLatorre2017} presented an introduction to noncommutative algebraic geometry for non-$\mathbb{N}$-graded algebras and finitely non-graded algebras by defining a new class of rings, the {\em semi-graded rings}. These rings extend several kinds of noncommutative rings of polynomial type such as Ore extensions \cite{Ore1931, Ore1933}, families of differential operators generalizing Weyl algebras and universal enveloping algebras of finite dimensional Lie algebras \cite{Bavula1992, BellGoodearl1988, Smith1991}, algebras appearing in mathematical physics \cite{IPR01, ReyesSarmiento2022, Zhedanov}, down-up algebras \cite{Benkart1999, BenkartRoby1998, Kirkmanetal1999}, ambiskew polynomial rings \cite{JordanWells1996, Jordan2000}, 3-dimensional skew polynomial rings \cite{BellSmith1990, Redman1999, ReyesSarmiento2022, Rosenberg1995}, PBW extensions \cite{BellGoodearl1988}, skew PBW extensions \cite{GallegoLezama2011}, and others. A detailed list of examples of semi-graded rings and its relationships with other algebras can be found in Fajardo et al. \cite{libropbw}. Ring-theoretical, algebraic and geometrical properties of semi-graded rings have been investigated in the literature by several authors (e.g., \cite{Artamonov2015, CalderonReyes2022, HashemiKhalilAlhevaz2017, Reyes2019, ReyesRodriguez2021, ReyesSuarez2020, SuarezChaconReyes2022, SuarezReyesSuarez2022, Tumwesigyeetal2020} and references therein).

\begin{definition}[{\cite[Definition 2.1]{LezamaLatorre2017}}]\label{def.SG2}
Let $R$ be a ring. $R$ is said to be {\em semi-graded} ($\mathsf{SG}$) if there exists a collection $\{R_n\}_{n\in\mathbb{Z}}$ of subgroups $R_n$ of the additive group $R^{+}$ such that the following conditions hold:
\begin{enumerate}
    \item [\rm (i)] $R=\bigoplus_{n\in\mathbb{Z}}R_n$.
    \item [\rm (ii)] For every $m,n\in\mathbb{Z}$, $R_mR_n\subseteq \bigoplus_{k\leq m+n} R_k$. 
    \item [\rm (iii)] $1\in R_0$.
\end{enumerate}
\end{definition}
The collection $\{R_n\}_{n\in\mathbb{Z}}$ is called {\em a semi-graduation of} $R$, and we say that the elements of $R_n$ are {\em homogeneous of degree} $n$.

\medskip

We say that $R$ is {\em positively semi-graded} if $R_n=0$ for every $n<0$. If $R$ and $S$ are semi-graded rings and $f: R\rightarrow S$ is a ring homomorphism, then we say that $f$ is {\em homogeneous} if $f(R_n)\subseteq S_n$ for every $n\in\mathbb{Z}$. 

\medskip

Definitions \ref{def.FSG} and \ref{def.FSGA} recall the notion of finitely semi-graded ring and finitely semi-graded algebra, respectively.

\begin{definition}[{\cite[Definition 2.4]{LezamaLatorre2017}}]\label{def.FSG}
A ring $R$ is called  \textit{finitely semi-graded} ($\mathsf{FSG}$) if it satisfies the following conditions:
\begin{enumerate}
    \item [\rm (i)] $R$ is $\mathsf{SG}$.
   \item [\rm (ii)] There exist finitely many elements $x_1,\dots,x_n\in R$ such that the subring generated by $R_0$ and $x_1,\dots,x_n$ coincides with $R$.
    \item [\rm (iii)] For every $n\geq0$, $R_n$ is a free $R_0$-module of finite dimension.
\end{enumerate}
\end{definition}

\begin{definition}[{\cite[Definition 10]{LezamaGomez2019}}]\label{def.FSGA}
A $\Bbbk$-algebra $R$ is said to be \textit{finitely semi-graded} ($\mathsf{FSG}$) if the following conditions hold:
\begin{enumerate}
    \item [\rm (i)] $R$ is an $\mathsf{FSG}$ ring with semi-graduation given by $R  = \bigoplus_{n\geq0}R_n$ .
    \item [\rm (ii)] For every $m,n\geq1$, $R_mR_n\subseteq  R_1\oplus \dots\oplus R_{m+n}$. 
   \item [\rm (iii)] $R$ is connected, i.e., $R_0=\Bbbk$.
    \item [\rm (iv)] $R$ is generated in degree 1.
\end{enumerate}
\end{definition}

From Definition \ref{def.FSGA}, it is straightforward to see that if $R$ is a $\Bbbk$-algebra, then $R_+:=\bigoplus_{n\geq1}R_n$ is a maximal ideal of $R$.

\medskip

Notice that $\mathbb{N}$-graded rings are $\mathsf{SG}$. Finitely graded $\Bbbk$-algebras, PBW extensions \cite{BellGoodearl1988}, 3-dimensional skew polynomial rings \cite{BellSmith1990}, down-up algebras \cite{Benkart1999, BenkartRoby1998}, diffusion algebras \cite{IPR01}, and skew PBW extensions \cite{GallegoLezama2011} are examples of $\mathsf{FSG}$ rings.

\begin{definition}[{\cite[Definition 2.2]{LezamaLatorre2017}}]
    Let $R$ be an $\mathsf{SG}$ ring and let $M$ be an $R$-module. We say that $M$ is  {\em semi-graded} if there exists a collection $\{M_n\}_{n\in\mathbb{Z}}$ of subgroups $M_n$ of the additive group $M^+$ such that the following conditions hold:
    \begin{enumerate}
        \item [\rm (i)] $M=\bigoplus_{n\in\mathbb{Z}}M_n$.
        \item [\rm (ii)] For every $m\geq 0$ and $n\in\mathbb{Z}$, $R_mM_n\subseteq \bigoplus_{k\leq m+n}M_k$.
    \end{enumerate}
\end{definition}
The collection $\{M_n\}_{n\in\mathbb{Z}}$ is called {\em a semi-graduation of} $M$, and we say that the elements of $M_n$ are {\em homogeneous of degree} $n$.

\medskip

$M$ is said to be {\em positively semi-graded} if $M_n=0$ for every $n<0$. Let $f: M\rightarrow N$ be a homomorphism of $R$-modules, where $M$ and $N$ are semi-graded $R$-modules. We say that $f$ is {\em homogeneous} if $f(M_n)\subseteq N_n$ for every $n\in\mathbb{Z}$. 

\begin{definition}[{\cite[Definition 2.3]{LezamaLatorre2017}}]
    Let $R$ be an $\mathsf{SG}$ ring, $M$ an $\mathsf{SG}$ $R$-module, and $N$ a submodule of $M$. We say that $N$ is a {\em semi-graded} ($\mathsf{SG}$) {\em submodule of} $M$ if $N=\bigoplus_{n\in\mathbb{Z}}N_n$ where $N_n=M_n\cap N$. In this case, $N$ is an SG $R$-module.
\end{definition}

\begin{proposition}[{\cite[Proposition 2.6]{LezamaLatorre2017}}]
    If $R$ is an $\mathsf{SG}$ ring, $M$ an $\mathsf{SG}$ $R$-module and $N$ a submodule of $M$, then the following conditions are equivalent:
    \begin{enumerate}
        \item [\rm (1)] $N$ is a semi-graded submodule of $M$.
        \item [\rm (2)] For every $z\in N$, the homogeneous components of $z$ are in $N$.
        \item [\rm (3)] $M/N$ is an $\mathsf{SG}$ $R$-module with semi-graduation given by
        \[
        (M/N)_n=(M_n+N)/N,\ n\in\mathbb{Z}.
        \]
    \end{enumerate}
\end{proposition}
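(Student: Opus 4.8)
The plan is to prove the cycle $(1)\Rightarrow(2)\Rightarrow(1)$ together with $(2)\Rightarrow(3)\Rightarrow(2)$, which yields the equivalence of all three conditions. The one tool used repeatedly is uniqueness of the homogeneous decomposition in $M=\bigoplus_{n\in\mathbb{Z}}M_n$: each $z\in M$ has a unique expression $z=\sum_{n\in\mathbb{Z}}m_n$ with $m_n\in M_n$ and almost all $m_n=0$, the $m_n$ being the homogeneous components of $z$.

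First I would handle $(1)\Leftrightarrow(2)$. For $(1)\Rightarrow(2)$: given $z\in N=\bigoplus_n(M_n\cap N)$, its decomposition in this direct sum reads $z=\sum_n z_n$ with $z_n\in M_n\cap N$, and comparing with the decomposition of $z$ in $M$ forces $z_n=m_n$ by uniqueness, so every homogeneous component $m_n$ lies in $M_n\cap N\subseteq N$. For $(2)\Rightarrow(1)$: the internal sum $\sum_n(M_n\cap N)$ is automatically direct (it is a sub-sum of $M=\bigoplus_nM_n$) and contained in $N$, while $(2)$ says every $z\in N$ equals $\sum_n m_n$ with $m_n\in M_n\cap N$; hence $N=\bigoplus_n(M_n\cap N)$.

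Next I would prove $(2)\Rightarrow(3)$ and $(3)\Rightarrow(2)$. For $(2)\Rightarrow(3)$: from $M=\sum_nM_n$ one gets $M/N=\sum_n(M_n+N)/N$; the sum is direct because $\sum_n(m_n+N)=N$ with $m_n\in M_n$ implies $\sum_nm_n\in N$, and then $(2)$ gives each $m_n\in N$. So $M/N=\bigoplus_n(M_n+N)/N$, which is axiom (i) for a semi-graded module; axiom (ii) follows from $R_mM_n\subseteq\bigoplus_{k\le m+n}M_k$ via
\[
R_m\bigl((M_n+N)/N\bigr)=(R_mM_n+N)/N\subseteq\Bigl(\bigoplus_{k\le m+n}M_k+N\Bigr)\Big/N=\bigoplus_{k\le m+n}(M_k+N)/N.
\]
For $(3)\Rightarrow(2)$: if $z\in N$ has homogeneous components $m_n\in M_n$, then in $M/N$ one has $N=\sum_n(m_n+N)$ with $m_n+N\in(M/N)_n$; since by $(3)$ the sum $M/N=\bigoplus_n(M/N)_n$ is direct, each $m_n+N=N$, i.e.\ $m_n\in N$, which is $(2)$.

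The main obstacle --- really the only non-formal point --- will be verifying that the internal sum $\sum_n(M_n+N)/N$ inside $M/N$ is direct, since that is exactly where the hypotheses $(1)$/$(2)$ are indispensable; the remaining verifications are immediate from Definition \ref{def.SG2} and the definition of a semi-graded module.
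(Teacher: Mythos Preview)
Your argument is correct and entirely standard for this type of statement. Note, however, that the paper does not actually give a proof of this proposition: it is quoted verbatim from \cite[Proposition~2.6]{LezamaLatorre2017} and stated without proof, so there is no in-paper argument to compare against. Your cycle $(1)\Leftrightarrow(2)$ and $(2)\Leftrightarrow(3)$, hinging on the uniqueness of homogeneous decompositions in $M=\bigoplus_n M_n$, is exactly the expected proof and matches the one in the cited source.
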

% \begin{proof}
%The equivalence between (1) and (2) it is clear.

% ${\rm (2)} \implies {\rm (3)}$ Let $\overline{M}_n:= (M_n+N)/N$, with $n\in\mathbb{Z}$, and $\overline{M}=M/N$. Consider $z\in M$ given by $z = z_1+\dots+z_l$, where $z_k\in M_{n_k}$ for $1\leq k\leq l$. Then $\overline{z}=\overline{z_1}+\dots+\overline{z_l}\in \sum_{n\in\mathbb{Z}}\overline{M}_n$, and so $\overline{M}=\sum_{n\in\mathbb{Z}}\overline{M}_n$. Since the equality $\overline{z_1}+\dots+\overline{z_l}=\overline{0}$ implies $z_1+\dots+z_l\in N$, part (2) shows that $z_k\in N$, i.e., $\overline{z_k} = \overline{0}$ for every $1\leq k\leq l$, and hence the sum is direct. 
 
% Now, let $b_m\in R_m$ and $\overline{z_n}\in\overline{M}_n$. Then $b_m\overline{z_n}=\overline{b_mz_n}=\overline{d_1+\dots+d_p}$, with $d_i\in M_{n_i}$ and $n_i\leq n+m$, whence $\overline{b_m}\overline{z_n}=\overline{d_1}+\dots+\overline{d_p}\in\bigoplus_{k\leq m+n}\overline{M}_k$. Therefore we have showed that $\overline{M}$ is semi-graded.
 
% ${\rm (3)} \implies {\rm (2)}$ Let $z = z_1+\dotsb + z_l\in N$, with $z_i\in M_{n_i}$, and $1\leq i\leq l$. Then $\overline{0} = \overline{z_1}+\dotsb + \overline{z_l}\in\bigoplus_{n\in\mathbb{Z}}\overline{M}_n$, which implies that  $\overline{z_i} = \overline{0}$, and so $z_i\in N$, for every $i$.
% \end{proof}

If $M$ is an $\mathsf{SG}$ $R$-module and $\{N_i\}_{i\in I}$ is a family of $\mathsf{SG}$ submodules of $M$, then it is clear that $\bigcap_{i\in I} N_i$ is an $\mathsf{SG}$ submodule of $M$. 

\medskip

Let $X$ be a subset of $M$. We define the $\mathsf{SG}$ submodule generated by $X$ as the intersection of all $\mathsf{SG}$  submodules containing $X$, and we will denote it as $\langle X\rangle^{\mathsf{SG}}$. If $X=\{x_1,\dots,x_n\}$, then write $\langle X\rangle^{\mathsf{SG}} = \langle x_1, \dotsc,  x_n\rangle^{\mathsf{SG}}$. We will say that $M$ is a {\em finitely generated} $\mathsf{SG}$ $R$-module if there exist finitely elements $m_1,\dots,m_n$ such that $M = \langle m_1, \dotsc, m_n\rangle^\mathsf{SG}$. If $M$ is simultaneously a module over different kinds of rings and there is risk of confusion, we write $\langle*\rangle^{\mathsf{SG}}_R$ to indicate the ring $R$ we are considering. 

\medskip

In a similar way, if $R$ is a positively $\mathsf{SG}$ ring, for $t\in \mathbb{N}$, we define $R_{\ge t}$ as the intersection of all two-sided ideals that are $\mathsf{SG}$ submodules containing $\bigoplus_{k\ge t}R_k$. 

\begin{remark}
If $R$ is a positively $\mathsf{SG}$ left Noetherian ring, then Proposition \ref{Noetherianidempotentfilter} shows that 
\[
\mathscr{L}(\kappa_{+}) = \{I\vartriangleleft_l R\mid {\rm there\ exist}\ n,m\in \mathbb{N}\ {\rm with}\ (R_{\ge m})^n\subseteq I\}
 \]
is an idempotent filter. The corresponding left exact radical is denoted by $\kappa_{+}$ and $Q_{\kappa_+}(M)$ is the module of quotients of $M$.
\end{remark}

%Now, we recall briefly the Serre-Artin-Zhang-Verevkin theorem formulated by Lezama \cite{Lezama2021, LezamaLatorre2017} in the setting of semi-graded rings.

%\medskip

%In \cite{LezamaLatorre2017} the theorem was formulated for semi-graded left Noetherian rings that are domains. However, the theorem for finitely graded algebras does not include this assumption

Next, we want to formalize several constructions concerning semi-graded rings which are necessary to formulate Serre's theorem.

\subsection{Localization of semi-graded rings} 

With the aim of defining {\em good} Ore sets (Definition \ref{goodOreset}), for $R$ an $\mathsf{SG}$ ring and an element $n\in \mathbb{Z}$, we consider the following sets: 
\begin{align*}
R'_n = &\ \{r\in R_n\mid \ {\rm for\ all}\ m\in\mathbb{Z},\ {\rm and}\ {\rm for\ all}\ h\in R_m, rh\in R_{n+m}\},\\
R''_n = &\ \{r\in R'_n\mid \ {\rm for\ all}\ m\in\mathbb{Z},\ {\rm and}\ {\rm for\ all}\ h\in R_m, hr\in R_{n+m}\},\\
R' = &\ \bigcup_{n\in\mathbb{Z}}R'_n,\\
R'' = &\ \bigcup_{n\in\mathbb{Z}}R''_n.
\end{align*}

\begin{definition}\label{goodOreset}
Let $R$ be an $\mathsf{SG}$ ring and consider a left Ore set $S$ of $R$. We say that $S$ is {\em good} if the following conditions hold:
    \begin{enumerate}
        \item [\rm (i)] $S\subseteq R''$, and 
        \item [\rm (ii)] if $s\in S$ and $r\in R'$, then there exist elements $u\in R'$ and $v\in S$ such that $us=vr$.
    \end{enumerate}
\end{definition}

From Definition \ref{goodOreset} it follows that for any elements $s_1,\dots,s_k\in S$, there exist $r_1,\dots,r_k\in R'$ such that $r_is_i=r_js_j\in S$, for every $i,j$.

\begin{definition}
    Let $R$ be an $\mathsf{SG}$ ring and $M$ an $\mathsf{SG}$ $R$-module. We say that $M$ is {\em localizable semi-graded} ($\mathsf{LSG}$) if for every element $(n,m)\in \mathbb{Z}^2$, the inclusion $R'_nM_m\subseteq M_{n+m}$ holds. 
\end{definition}

\begin{proposition}\label{localization-module}
Let $R$ be an $\mathsf{SG}$ ring, $S$ a good left Ore set, and $M$ an $\mathsf{LSG}$ $R$-module. Then $S^{-1}M$ is an $\mathsf{LSG}$ $R$-module with semi-graduation given by
\[
    (S^{-1}M)_n = \left\{\frac{f}{s}\mid f\in \bigcup_{k\in\mathbb{Z}} M_k,\ {\rm deg}(f) - {\rm deg}(s)=n \right\}.
\]
\end{proposition}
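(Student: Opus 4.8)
The plan is to verify in turn the defining conditions: first that the proposed $(S^{-1}M)_n$ are additive subgroups whose sum is all of $S^{-1}M$ and is direct, then the $\mathsf{SG}$-module inclusion $R_m\,(S^{-1}M)_n\subseteq\bigoplus_{k\le m+n}(S^{-1}M)_k$, and finally the $\mathsf{LSG}$ inclusion $R'_m\,(S^{-1}M)_n\subseteq(S^{-1}M)_{m+n}$. I would work throughout with the identification $S^{-1}M\cong Q_{\kappa_S}(M)$, so as to use freely: the localization map $\varphi_M\colon M\to S^{-1}M$ with $\ker\varphi_M=\kappa_S(M)=\{m\mid tm=0\text{ for some }t\in S\}$; that $S^{-1}M$ is $\kappa_S$-torsion-free; that $s^{-1}f=(as)^{-1}(af)$ whenever $as\in S$; that $r\cdot(s^{-1}f)=v^{-1}(uf)$ for any $v\in S$, $u\in R$ with $vr=us$; and that $w\cdot(w^{-1}g)=\varphi_M(g)$. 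Since $S\subseteq R''$, every $s\in S$ is homogeneous, and every nonzero element of $R'$ (resp.\ of $M$) has a well-defined degree because $R=\bigoplus_n R_n$ (resp.\ $M=\bigoplus_n M_n$). One preliminary observation I will need is that $\kappa_S(M)$ is an $\mathsf{SG}$ submodule of $M$: this holds because $S\subseteq R''$ forces the homogeneous components of any $S$-torsion element to be $S$-torsion again.

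The workhorse for the additive and degree bookkeeping is the consequence of goodness recorded after Definition \ref{goodOreset}: for $s_1,\dots,s_k\in S$ there are $r_1,\dots,r_k\in R'$ with $w:=r_is_i=r_js_j\in S$ for all $i,j$. To see each $(S^{-1}M)_n$ is a subgroup, given $f/s,\,g/t\in(S^{-1}M)_n$ I would pick $r,r'\in R'$ with $w=rs=r't\in S$; then $f/s=(rf)/w$ and $g/t=(r'g)/w$, and since $M$ is $\mathsf{LSG}$ the numerators $rf$ and $r'g$ are homogeneous of degree $\deg w+n$, so their sum is too and $f/s+g/t\in(S^{-1}M)_n$; closure under negatives and $0\in(S^{-1}M)_n$ for every $n$ are immediate. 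That $S^{-1}M=\sum_n(S^{-1}M)_n$ follows by splitting $s^{-1}m=\sum_k s^{-1}m_k$ over the homogeneous components of $m$. For directness, suppose $\sum_n x_n=0$ with $x_n\in(S^{-1}M)_n$; putting all the $x_n$ over a common denominator $w$ as above gives $x_n=w^{-1}g_n$ with $g_n$ homogeneous of degree $\deg w+n$; then $0=w\cdot\sum_n x_n=\varphi_M(\sum_n g_n)$, so $\sum_n g_n\in\kappa_S(M)$, hence each $g_n\in\kappa_S(M)$ (that submodule being $\mathsf{SG}$), hence $w\cdot x_n=\varphi_M(g_n)=0$; since $S^{-1}M$ is $\kappa_S$-torsion-free, $x_n=0$.

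For the module-action conditions, I would argue as follows. For the $\mathsf{SG}$ inclusion, fix $r\in R_m$ and $x=s^{-1}f\in(S^{-1}M)_n$, so $f\in M_{\deg s+n}$, and choose $v\in S$, $u\in R$ with $vr=us$; since $v\in R''$ the product $vr$ is homogeneous of degree $\deg v+m$, whereas the homogeneous components of $us=\sum_j u_js$ lie in the pairwise distinct degrees $j+\deg s$ (using $s\in R''$), so every component of $u$ except the one in degree $\deg v+m-\deg s$ is annihilated by $s$; replacing $u$ by that component we may take $u\in R_{\deg v+m-\deg s}$, whence $uf\in\bigoplus_{k\le\deg v+m+n}M_k$ and $rx=v^{-1}(uf)\in\bigoplus_{l\le m+n}(S^{-1}M)_l$. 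For the $\mathsf{LSG}$ inclusion, take instead $r\in R'_m$; now goodness (ii) applies directly and yields $u\in R'$, $v\in S$ with $us=vr$, so $u$ is already homogeneous, say $u\in R'_q$; from $us=vr$ one reads off $q+\deg s=\deg v+m$, and since $M$ is $\mathsf{LSG}$ the element $uf$ is homogeneous of degree $q+\deg s+n$, so $rx=v^{-1}(uf)\in(S^{-1}M)_{(q+\deg s+n)-\deg v}=(S^{-1}M)_{m+n}$. Degenerate cases, where one of the relevant products vanishes, give $rx=0$, which lies in every component.

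I expect the main obstacle to be organizational rather than conceptual: one must invoke goodness at each step precisely so that every auxiliary multiplier that appears (the $r_i$ in the common denominator, the $u$ obtained by commuting past $s^{-1}$) is homogeneous — lies in $R'$ or $R''$ — and therefore preserves degrees, and one must keep the degree arithmetic consistent using the "$R''$ preserves degrees on both sides" property. The only genuinely non-formal point is the directness argument, which requires both that $\kappa_S(M)$ is a semi-graded submodule of $M$ and that $S^{-1}M=Q_{\kappa_S}(M)$ is $\kappa_S$-torsion-free.
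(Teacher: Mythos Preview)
Your proposal is correct and follows essentially the same route as the paper's proof: common denominators via goodness for the subgroup and directness checks, then the Ore relation $vr=us$ (the paper writes $s'r=r's$) with the observation that $s,v\in R''$ forces the auxiliary multiplier to be effectively homogeneous, and finally goodness (ii) for the $\mathsf{LSG}$ step. The only cosmetic difference is that you phrase the directness argument through the facts that $\kappa_S(M)$ is an $\mathsf{SG}$ submodule and $S^{-1}M$ is $\kappa_S$-torsion-free, whereas the paper unwinds this directly: from $\sum r_im_i/(r_1s_1)=0$ it picks $s\in S$ with $s\sum r_im_i=0$, notes that the $sr_im_i$ are homogeneous of pairwise distinct degrees (since $s,r_i\in R'$), and concludes each term vanishes.
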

\begin{proof}
First of all, let us show that $(S^{-1}M)_n$ is a subgroup of $S^{-1}M$. It is clear that $0=\frac{0}{1}\in (S^{-1}M)_n$ and that $(S^{-1}M)_n$ have additive inverses. Consider elements  $\frac{p}{s},\frac{q}{t}\in (S^{-1}M)_n$. Then  ${\rm deg}(p)-{\rm deg}(s) = {\rm deg}(q)-{\rm deg}(t)=n$. There exist elements $u\in R'$ and $v\in S$ such that $us=vt\in S$. Note that ${\rm deg}(u) + {\rm deg}(s) = {\rm deg}(v) + {\rm deg}(t)$. Since $u,v\in R'$, it follows that $up$ and $vq$ are homogeneous elements satisfying ${\rm deg}(up) = {\rm deg}(u) + {\rm deg}(p) = {\rm deg}(v) + {\rm deg}(q) = {\rm deg}(vq)$, whence  $\frac{p}{s}+\frac{q}{t}=\frac{up+vq}{vt}$ is a homogeneous elements of degree ${\rm deg}(v) + {\rm deg}(q)-({\rm deg}(v)+{\rm deg}(t))=n$.
    
\medskip

Since it is clear that $S^{-1}M$ is the sum of the subgroups $(S^{-1}M)_n$, let us show that the sum is direct. Consider the sum 
\[
    \sum_{i=1}^k\frac{m_i}{s_i}=0
\]  
of homogeneous elements of  $S^{-1}M$ with different degrees, that is, ${\rm deg}(m_i) - {\rm deg}(s_i)\neq {\rm deg}(m_j) -{\rm deg}(s_j)$, for $i\neq j$. There exist elements $r_1,\dots,r_k\in R'$ such that $r_is_i=r_js_j$ for all $i,j$, which implies that 
\[
    0= \sum_{i=1}^k\frac{m_i}{s_i}= \frac{\sum_{i=1}^kr_im_i}{r_1s_1}.
\]
Hence, there exists an element $s\in S$ such that $0=s\sum_{i=1}^kr_im_i=\sum_{i=1}^ksr_im_i$. Since $s,r_i\in R'$ and $m_i$ is homogeneous for $0\leq i \leq k$, then every one of the terms above is homogeneous. Using that $r_is_i=r_js_j$, we have ${\rm deg}(r_i) + {\rm deg}(s_i) = {\rm deg}(r_j) + {\rm deg}(s_j)$, whence ${\rm deg}(s) + {\rm deg}(r_i) + {\rm deg}(m_i)\neq {\rm deg}(s) + {\rm deg}(r_j) + {\rm deg}(m_j)$, which shows that $sr_im_i=0$. Thus, $0=\frac{r_im_i}{r_is_i}=\frac{m_i}{s_i}$.

\medskip

Now, let us see that $R_a(S^{-1}M)_b\subseteq\bigoplus_{k\leq a+b}(S^{-1}M)_k$. Let $r\in R_a$ and $\frac{m}{s}\in(S^{-1}M)_b$. There exist elements $r'\in R$ and $s'\in S$ such that $r's=s'r$. Since $s, s'\in R''$ and $r$ is homogeneous, we can take the element $r'$ being homogeneous. Then ${\rm deg} (r') = {\rm deg}(s') + {\rm deg}(r) - {\rm deg}(s)$, and using that $r\frac{m}{s} = \frac{r'm}{s'}$ and $r'm\in\bigoplus_{k\leq {\rm deg}(r') + {\rm deg}(m)}M_k$, it follows that $\frac{r'm}{s'}\in \bigoplus_{k\leq {\rm deg}(r') + {\rm deg}(m) - {\rm deg}(s')}(S^{-1}M)_k$. Since ${\rm deg}(r') + {\rm deg}(m)- {\rm deg}(s') = {\rm deg}(r) + {\rm deg}(m) - {\rm deg} (s) = a+b$, then $r\frac{m}{s}\in \bigoplus_{k\leq a+b}(S^{-1}M)_k$. This fact proves that $S^{-1}M$ is an $\mathsf{SG}$ $R$-module.

\medskip

Notice that if we above consider the element $r\in R'_a$, then we can take $r'\in R'$, whence $r'm\in M_{{\rm deg}(r') + {\rm deg}(m)}$, and so $r\frac{m}{s}\in(S^{-1}M)_{a+b}$. This shows that $S^{-1}M$ is an $\mathsf{LSG}$ $R$-module.
\end{proof}

The next result shows that the localization of an $\mathsf{SG}$ ring by considering a good Ore set is again an $\mathsf{SG}$ ring. 

\begin{proposition}
    Let $R$ be an $\mathsf{SG}$ ring and $S$ a good left Ore set. Then $S^{-1}R$ is an $\mathsf{SG}$ ring with semigraduation given by
    \[
    (S^{-1}R)_n=\left\{\frac{f}{s}\mid f\in \bigcup_{k\in\mathbb{Z}} R_k,\ {\rm deg}(f) - {\rm deg}(s)=n \right\}.
    \]
    \begin{proof}
        It is clear that $R$ is an $\mathsf{LSG}$ $R$-module, so $S^{-1}R$ is an $\mathsf{SG}$ $R$-module with the semigraduation above, so $S^{-1}R = \bigoplus_{k\in\mathbb{Z}}(S^{-1}R)_k$. It is easy to see that $1=\frac{1}{1}\in(S^{-1}R)_0$. We only have to show that  $(S^{-1}R)_n(S^{-1}R)_m\subseteq\bigoplus_{k\leq n+m}(S^{-1}R)_k$.

        \medskip

        Let $\frac{r_1}{s_1}\in(S^{-1}R)_n$ and $\frac{r_2}{s_2}\in(S^{-1}R)_m$. There exist elements $u\in R$ and $v\in S$ such that $vr_1=us_2$, which implies that $\frac{r_1}{s_1}\frac{r_2}{s_2}=\frac{ur_2}{vs_1}$. Again, since $s_2,v\in R''$ and $r_1$ is homogeneous, we can take $u$ as an homogeneous element. Hence, $ur_2\in\bigoplus_{k\leq {\rm deg}(u) + {\rm deg}(r_2)}R_k$, and so
        $\frac{ur_2}{vs_1}\in\bigoplus_{k\leq {\rm deg}(u) + {\rm deg}(r_2) - {\rm deg}(v) - {\rm deg}(s_1)}(S_{-1}R)_k$, i.e., $\frac{ur_2}{vs_1}\in \bigoplus_{k\leq n+m}(S^{-1}R)_k$.
    \end{proof}
\end{proposition}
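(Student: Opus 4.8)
The plan is to reduce everything to Proposition~\ref{localization-module}. First I would observe that $R$, viewed as a module over itself, is an $\mathsf{LSG}$ $R$-module: by the very definition of the set $R'_n$ we have $R'_nR_m\subseteq R_{n+m}$ for every $(n,m)\in\mathbb{Z}^2$, which is precisely the $\mathsf{LSG}$ condition. Since $S$ is a \emph{good} left Ore set, Proposition~\ref{localization-module} then gives that $S^{-1}R$ is an $\mathsf{LSG}$ $R$-module with exactly the semigraduation displayed in the statement; in particular $S^{-1}R=\bigoplus_{n\in\mathbb{Z}}(S^{-1}R)_n$ as abelian groups, which is axiom~(i) of Definition~\ref{def.SG2}, and $1=\frac{1}{1}\in(S^{-1}R)_0$, which is axiom~(iii). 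Thus the whole problem collapses to verifying the multiplicative axiom~(ii), namely $(S^{-1}R)_n(S^{-1}R)_m\subseteq\bigoplus_{k\le n+m}(S^{-1}R)_k$.

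To check this, I would take homogeneous fractions $\frac{r_1}{s_1}\in(S^{-1}R)_n$ and $\frac{r_2}{s_2}\in(S^{-1}R)_m$ and compute their product inside the Ore localization. Applying the left Ore condition to the pair $(s_2,r_1)$, choose $v\in S$ and $u\in R$ with $vr_1=us_2$, so that $\frac{r_1}{s_1}\frac{r_2}{s_2}=\frac{ur_2}{vs_1}$ (note $vs_1\in S$ since $S$ is multiplicatively closed). The crucial point — and what I expect to be the main obstacle — is that $u$ can be taken homogeneous, and this is exactly where goodness of $S$ is used. Since $S\subseteq R''$, left multiplication by $v$ and right multiplication by $s_2$ both shift degree exactly: $vr_1\in R_{\deg v+\deg r_1}$ is genuinely homogeneous, and if $u=\sum_k u_k$ is the decomposition into homogeneous components then $u_ks_2\in R_{k+\deg s_2}$, with these summands living in pairwise distinct graded pieces. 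Comparing homogeneous components in $vr_1=\sum_k u_ks_2$ forces $u_ks_2=0$ for every $k$ except $k_0:=\deg v+\deg r_1-\deg s_2$, so $u':=u_{k_0}$ still satisfies $vr_1=u's_2$; by well-definedness of Ore multiplication we may therefore replace $u$ by the homogeneous element $u'$, i.e.\ $\frac{r_1}{s_1}\frac{r_2}{s_2}=\frac{u'r_2}{vs_1}$.

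Once $u'$ is homogeneous, the rest is bookkeeping with degrees. By axiom~(ii) applied to $R$ itself, $u'r_2\in\bigoplus_{j\le\deg u'+\deg r_2}R_j$, and $vs_1\in S\cap R_{\deg v+\deg s_1}$ because $v,s_1\in S\subseteq R''$; hence, by the description of the semigraduation of $S^{-1}R$, $\frac{u'r_2}{vs_1}\in\bigoplus_{k\le\deg u'+\deg r_2-\deg v-\deg s_1}(S^{-1}R)_k$. Substituting $\deg u'+\deg s_2=\deg v+\deg r_1$ (the homogeneous identity $u's_2=vr_1$) gives $\deg u'+\deg r_2-\deg v-\deg s_1=(\deg r_1-\deg s_1)+(\deg r_2-\deg s_2)=n+m$, which is precisely the bound demanded by axiom~(ii). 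This establishes $(S^{-1}R)_n(S^{-1}R)_m\subseteq\bigoplus_{k\le n+m}(S^{-1}R)_k$ and completes the proof that $S^{-1}R$ is an $\mathsf{SG}$ ring with the stated semigraduation.
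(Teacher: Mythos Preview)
Your proof is correct and follows essentially the same approach as the paper: reduce to Proposition~\ref{localization-module} for axioms (i) and (iii), then verify the multiplicative axiom (ii) by applying the Ore condition and using $S\subseteq R''$ to replace $u$ by a homogeneous element. The only difference is cosmetic: where the paper simply asserts ``since $s_2,v\in R''$ and $r_1$ is homogeneous, we can take $u$ as an homogeneous element,'' you spell out the component-comparison argument that justifies this replacement.
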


\begin{proposition}
Let $R$ be an $\mathsf{SG}$ ring, $S$ a good left Ore set and $M$ an $\mathsf{LSG}$ $R$-module. Then $S^{-1}M$ is an $\mathsf{SG}$ $S^{-1}R$-module.
\end{proposition}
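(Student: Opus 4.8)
The plan is to check, directly, the two defining conditions of a semi-graded module for $S^{-1}M$ regarded as a module over the semi-graded ring $S^{-1}R$, using the semi-graduation $(S^{-1}M)_n$ exhibited in Proposition \ref{localization-module}. Condition (i), that $S^{-1}M=\bigoplus_{n\in\mathbb{Z}}(S^{-1}M)_n$ as abelian groups, is already part of the content of that proposition, so nothing needs to be done. For condition (ii), by additivity it suffices to fix homogeneous elements $\frac{r}{s}\in(S^{-1}R)_a$ (so $r$ is homogeneous with ${\rm deg}(r)-{\rm deg}(s)=a\ge 0$) and $\frac{m}{t}\in(S^{-1}M)_b$, and to prove $\frac{r}{s}\cdot\frac{m}{t}\in\bigoplus_{k\le a+b}(S^{-1}M)_k$.

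The cleanest route I see is to factor $\frac{r}{s}=\frac{1}{s}\cdot\frac{r}{1}$ in $S^{-1}R$ and reuse the structures already in place. On one hand, $\frac{r}{1}$ acts on $S^{-1}M$ exactly as $r\in R$ acts through the canonical ring homomorphism $R\to S^{-1}R$, and $S^{-1}M$ is an $\mathsf{LSG}$ (hence $\mathsf{SG}$) $R$-module by Proposition \ref{localization-module}; thus $r\cdot\frac{m}{t}\in\bigoplus_{k\le {\rm deg}(r)+b}(S^{-1}M)_k$. On the other hand, left multiplication by $\frac{1}{s}$ sends $\frac{n}{v}$ to $\frac{n}{vs}$ and hence shifts the degree by exactly $-{\rm deg}(s)$, i.e.\ it carries $\bigoplus_{k\le e}(S^{-1}M)_k$ into $\bigoplus_{k\le e-{\rm deg}(s)}(S^{-1}M)_k$. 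Composing, $\frac{r}{s}\cdot\frac{m}{t}=\frac{1}{s}\cdot\bigl(r\cdot\frac{m}{t}\bigr)$ lands in $\bigoplus_{k\le {\rm deg}(r)+b-{\rm deg}(s)}(S^{-1}M)_k=\bigoplus_{k\le a+b}(S^{-1}M)_k$, which is condition (ii). Alternatively, one can argue exactly as in the proof that $S^{-1}R$ is semi-graded: solve $t'r=r't$ with $t'\in S$ and $r'\in R$ via the left Ore condition, use $t,t'\in R''\subseteq R'$ and the homogeneity of $r$ to take $r'$ homogeneous of degree ${\rm deg}(t')+{\rm deg}(r)-{\rm deg}(t)$, invoke the semi-graded module axiom of $M$ on the product $r'm$, and divide by the homogeneous element $t's$.

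The step I expect to be the actual obstacle is the degree bookkeeping in condition (ii), and in particular ensuring that the semi-graded module inequality $R_cM_d\subseteq\bigoplus_{k\le c+d}M_k$ is invoked only at a nonnegative degree $c$, which is where the definition supplies it: in the notation above $c={\rm deg}(r)$ (equivalently ${\rm deg}(t')+{\rm deg}(r)-{\rm deg}(t)$ in the second approach). Since $a\ge 0$ we have ${\rm deg}(r)={\rm deg}(s)+a$, so this reduces to knowing ${\rm deg}(s)\ge 0$ for $s\in S$, which is automatic when $R$ is positively semi-graded and is in fact built into $S\subseteq R''$ in that case. Everything else — well-definedness of the action, the remaining verifications, and the module axioms of $S^{-1}R$ — is inherited from the constructions of the two preceding propositions and requires no new idea.
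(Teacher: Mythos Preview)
Your proposal is correct. Your ``alternative'' approach---solve $t'r=r't$ via the left Ore condition, take $r'$ homogeneous using $t,t'\in R''$, apply the semi-graded axiom for $M$ to $r'm$, then divide by the homogeneous element $t's$---is exactly the paper's proof, line for line.

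Your primary approach, factoring $\frac{r}{s}=\frac{1}{s}\cdot\frac{r}{1}$ and then invoking the already-established $\mathsf{SG}$ $R$-module structure of $S^{-1}M$ from Proposition~\ref{localization-module} for the $\frac{r}{1}$ factor followed by the pure degree shift for $\frac{1}{s}$, is a genuine (if minor) variation: you reuse the previous proposition as a black box rather than repeating its Ore-condition computation. This is slightly more economical and makes clear that nothing new is happening beyond what was already done for the $R$-action. The paper instead redoes the computation from scratch, which has the small advantage of keeping the argument self-contained. Your worry about the nonnegativity of the degree where the $\mathsf{SG}$-module axiom is invoked is more scrupulous than the paper, which does not address it; in the positively semi-graded case (the only one used later) it is, as you note, automatic.
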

\begin{proof}
We know that $S^{-1}M$ is an $S^{-1}R$-module and an $\mathsf{SG}$ $R$-module, which implies the direct sum $S^{-1}M = \bigoplus_{k\in\mathbb{Z}}(S^{-1}M)_k$. In this way, we just have to prove that $(S^{-1}R)_n(S^{-1}M)_m\subseteq\bigoplus_{k\leq n+m}(S^{-1}M)_k$. Consider elements $\frac{r}{s_1}\in(S^{-1}R)_n$ and $\frac{a}{s_2}\in(S^{-1}M)_m$. There exist elements $u\in R$ and $v\in S$ such that $vr = us_2$, which implies that $\frac{r}{s_1}\frac{a}{s_2} = \frac{ua}{vs_1}$. Again, since $s_2,v\in R''$ and $r$ is homogeneous, we can take $u$ as an homogeneous element. Hence, $ua\in\bigoplus_{k\leq {\rm deg}(u) + {\rm deg}(a)}R_k$, and so $\frac{ua}{vs_1}\in\bigoplus_{k\leq {\rm deg}(u) + {\rm deg}(a) - {\rm deg}(v) - {\rm deg}(s_1)}(S_{-1}R)_k$, i.e., $\frac{ua}{vs_1}\in \bigoplus_{k\leq n+m}(S^{-1}R)_k$.
\end{proof}

%Since we want to perform localizations and limits, we must consider modules bounded by some value of $\beta$. However, as it is clear, this $\beta$ should be such that if $M$ is $\beta-\mathsf{SG}$, then $S^{-1}M$ is also $\beta-\mathsf{SG}$. With this aim, notice that at the end of the proof of Proposition \ref{localization-module} we should have the relationship
%\[
%r\frac{m}{s}\in\bigoplus_{k=\beta(a,b-c)}^{a+b} (S^{-1}M)_k,
%\]
%where ${\rm deg}(r) = a, {\rm deg}(m) = b$ and ${\rm deg}(s) = c$. In this case, $r\frac{m}{s}=\frac{r'm}{s'}\in\bigoplus_{k=\beta(d,b)-e}^{c+d-e}(S^{-1}M)_k$, with ${\rm deg}(r') = d$, ${\rm deg}(s') = e$ and $r's=s'r$, whence $d+c=e+a$. Hence, for the condition to be met it is sufficient that $\beta(a,b-c)\leq \beta(d,b)-e = \beta(a+e-c,b)-e$, for every elements $a,b,c,e\in \mathbb{Z}$. Equivalently, the following inequality has to be satisfied 
%\[
%\beta(a+e-c,b)\geq \beta(a,b-c)+e. 
%\]
%In particular, if $a=c=0$ then $\beta(e,b)\geq\beta(0,b)+e$, and if $a=0$ and $b=e=c$, it follows that $\beta(0,b)\geq \beta(0,0)+b$. Both inequalities guarantee that $\beta(e,b)\geq \beta(0,0)+e+b=\beta_k(e,b)$, where $k=-\beta(0,0)$.

\subsection{Category of semi-graded rings}
    We define the {\em category} $\mathsf{SGR}$ {\em of semi-graded rings} whose objects are the semi-graded rings and morphisms are the homogeneous ring homomorphisms. For a semi-graded ring $R$, $\mathsf{SGR}-R$ will denote the {\em category of semi-graded modules over} $R$ where the morphisms are the homogeneous $R-$homomorphisms. It is straightforward to see that $\mathsf{SGR}-R$ is preadditive, and that the zero object of the category is the trivial module.

\medskip

Let $f:M\rightarrow N$ be a morphism in $\mathsf{SGR}-R$. Since ${\rm Ker}(f)$ and ${\rm Im}(f)$ are semi-graded submodules, it follows that $N/{\rm Im}(f)$ is a semi-graded module. This fact guarantees that the category $\mathsf{SGR}-R$ has kernels and cokernels. If $f$ is a monomorphism of $\mathsf{SGR}-R$, then $f$ is the kernel of the canonical homomorphism $j:N\rightarrow N/{\rm Im}(f)$. If $f$ is an epimorphism, then $f$ is the cokernel of the inclusion $i:{\rm Ker}(f)\rightarrow M$. In this way, the category $\mathsf{SGR}-R$ is normal and conormal.

\medskip

If $\{M_i\}_{i\in I}$ is a family of objects of $\mathsf{SGR}-R$, then their direct sum $\bigoplus_{i\in I} M_i$ is a semi-graded ring with semi-graduation given by
\[
\biggl(\bigoplus_{i\in I} M_i\biggr)_p := \bigoplus_{i\in I}(M_i)_p, \ p\in\mathbb{Z}.
\]
It is easy to see that this object with the natural inclusions coincides with the coproduct of the familiy of objects $\{M_i\}_{i\in I}$ in $\mathsf{SGR}-R$. Therefore, $\mathsf{SGR}-R$ is an Abelian category.

\medskip 

We define $\mathsf{LSG}-R$ as the full subcategory of $\mathsf{SGR}-R$ whose objects are the  $\mathsf{LSG}$ modules. This subcategory is closed for subobjects, quotients and coproducts, so it is Abelian.

\section{Schematicness of semi-graded rings}\label{schematicness}

Following Van Oystaeyen and Willaert's ideas developed in \cite{VanOystaeyenWillaert1995}, in this section we define the notion of {\em schematicness} in the setting of semi-graded rings. For a positively $\mathsf{SG}$ ring $R$, we define $R_+=\bigoplus_{k\ge 1} R_k$ and we say that a left Ore set $S$ is {\em non-trivial} if $S\cap R_+\neq\emptyset$.

\begin{definition}\label{def.schematic}
    Let $R$ be a positively $\mathsf{SG}$ left Noetherian ring. $R$ is called ({\em left}) {\em schematic} if there is a finite set $I$ of non-trivial good left Ore sets of $R$ such that for each $(x_S)_{S\in I}\in \prod_{S\in I}S$, there exist $t,m\in \mathbb{N}$ such that $(R_{\ge t})^m\subseteq\sum_{S\in I}Rx_s$.
\end{definition}

The following result illustrates some characterizations of being schematic (c.f. Definition \ref{schematicgradedsetting}).

\begin{proposition}
    Let $R$ be a positively $\mathsf{SG}$ left Noetherian algebra and $S_1,\dots, S_n$ a finite set of non-trivial good left Ore sets of $R$. The following conditions are equivalent:
    \begin{enumerate}
        \item [\rm (1)] For each $(x_1,\dots,x_n)\in \prod_{i=1}^n S_i$, there exist elements $t, m\in \mathbb{N}$ such that $(R_{\ge t})^m\subseteq\sum_{S\in I}Rx_s$.
        \item [\rm (2)] Let $I\vartriangleleft_l R$. If $I$ has no trivial intersection with every $S_i$, then $I$ contains a power of $R_{\ge t}$ for some $t\in\mathbb{N}$.
        \item [\rm (3)] $\bigcap_{i=1}^n\mathscr{L}(S_i)=\mathscr{L}(\kappa_+)$.
    \end{enumerate}
    \begin{proof}
        The equivalence $(1) \Leftrightarrow (2)$ and the implication $(3) \Rightarrow (1)$ are straightforward.

        $(1) \Rightarrow (3)$ Let $I\in \bigcap_{i=1}^n\mathscr{L}(S_i)$. There exist elements $x_1,\dots, x_n$ such that $x_i\in I\cap S_i$, for every $i$. Thus $\sum_{i=1}^n Rx_i\subseteq I$, and there exist $t,m$ with $(R_{\ge t})^m\subseteq I$, which shows that $I\in\mathscr{L}(\kappa_+)$.

        Now, let $I\in \mathscr{L}(\kappa_+)$. There exist $t,m$ such that $(R_{\ge t})^m\subseteq I$. By using that $S_i\cap R_+\neq \emptyset$, there exist elements $s_i\in S_i$ such that ${\rm deg}(s_i)\ge 1$, for all $i$. Hence $s_i^t\in R_{\ge t}$, $s_i^{tm}\in (R_{\ge t})^m\subseteq I$, and therefore $I\cap S_i\neq\emptyset$. This shows that $I\in\bigcap_{i=1}^n\mathscr{L}(S_i)$.
    \end{proof}
\end{proposition}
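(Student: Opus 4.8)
The plan is to prove the three conditions equivalent by establishing a short cycle of implications, relying on the basic closure properties of the filters $\mathscr{L}(S_i)$ and $\mathscr{L}(\kappa_+)$ recorded earlier. First I would dispose of $(1)\Leftrightarrow(2)$: condition $(2)$ is just the contrapositive-style reformulation of $(1)$, because a left ideal $I$ has nonempty intersection with each $S_i$ precisely when one can choose $x_i\in I\cap S_i$, and then $\sum_i Rx_i\subseteq I$; conversely, if $\sum_i Rx_i\subseteq I$ contains some $(R_{\ge t})^m$, then the same power sits inside $I$. So these two are literally the same statement read from two sides, and I would spend only a sentence on it. Likewise the implication $(3)\Rightarrow(1)$ is immediate: if $\bigcap_i\mathscr{L}(S_i)=\mathscr{L}(\kappa_+)$, then given any $(x_i)\in\prod S_i$ the ideal $\sum_i Rx_i$ meets every $S_i$ (it contains $x_i$), hence lies in $\mathscr{L}(\kappa_+)$, which by definition means it contains some $(R_{\ge t})^m$.

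The substantive work is $(1)\Rightarrow(3)$, which I would split into the two inclusions of the claimed equality of filters. For $\bigcap_i\mathscr{L}(S_i)\subseteq\mathscr{L}(\kappa_+)$: take $I$ in every $\mathscr{L}(S_i)$, pick $x_i\in I\cap S_i$, apply hypothesis $(1)$ to the tuple $(x_1,\dots,x_n)$ to get $(R_{\ge t})^m\subseteq\sum_i Rx_i\subseteq I$, so $I\in\mathscr{L}(\kappa_+)$. For the reverse inclusion $\mathscr{L}(\kappa_+)\subseteq\bigcap_i\mathscr{L}(S_i)$: given $I\supseteq(R_{\ge t})^m$, I would use the non-triviality of each $S_i$ to fix $s_i\in S_i$ with $\deg(s_i)\ge 1$. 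The point is then that a sufficiently high power of $s_i$ lands in $(R_{\ge t})^m$. Here one must be slightly careful with the semi-graded setting: $s_i^t$ need not be homogeneous, but since $s_i\in R''$ and it has a homogeneous component of top degree $\ge 1$ (indeed $\deg(s_i)\ge 1$ in the sense used in the paper, i.e. $s_i\in\bigcup_{k\ge1}R_k$ up to lower-degree pieces), one checks $s_i^t\in R_{\ge t}$ using property (ii) of Definition \ref{def.SG2} and the definition of $R_{\ge t}$ as the smallest two-sided $\mathsf{SG}$-ideal containing $\bigoplus_{k\ge t}R_k$; then $s_i^{tm}\in(R_{\ge t})^m\subseteq I$, so $I\cap S_i\ne\emptyset$. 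Doing this for each $i$ gives $I\in\bigcap_i\mathscr{L}(S_i)$.

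The main obstacle, and the only place where the semi-graded generality (as opposed to the $\mathbb{N}$-graded case) actually bites, is precisely the claim $s_i^t\in R_{\ge t}$ when $s_i$ is a non-homogeneous element of $S_i\cap R_+$. In the $\mathbb{N}$-graded setting one simply takes $s_i$ homogeneous of positive degree and $s_i^t\in R_t\subseteq R_{\ge t}$ is trivial; here I would argue that if $s\in R_+$ then $s\in\bigoplus_{1\le k\le N}R_k$ for some $N$, write $s=s^{(1)}+\dots+s^{(N)}$ with $s^{(k)}\in R_k$, observe that $s\in R_{\ge1}$ (since $\bigoplus_{k\ge1}R_k\subseteq R_{\ge1}$), and then use that $R_{\ge1}$ is a two-sided ideal to get $s^t\in(R_{\ge1})^t\subseteq R_{\ge t}$ — the last inclusion because $(R_{\ge1})^t$ is a two-sided $\mathsf{SG}$-ideal contained in $\bigoplus_{k\ge t}$-type pieces by repeated use of Definition \ref{def.SG2}(ii), hence contained in the smallest such ideal $R_{\ge t}$. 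Once this monomial bookkeeping is pinned down, the rest is formal. I would present the proof in the order $(1)\Leftrightarrow(2)$, then $(3)\Rightarrow(1)$, then the two halves of $(1)\Rightarrow(3)$, matching the skeleton already sketched in the excerpt but filling in the semi-graded subtlety above.
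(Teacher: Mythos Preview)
Your proof follows exactly the same route as the paper's: dispose of $(1)\Leftrightarrow(2)$ and $(3)\Rightarrow(1)$ as formalities, then prove both inclusions of $(1)\Rightarrow(3)$ via the obvious choices of $x_i$ and $s_i$. The only difference is the extra paragraph you devote to what you call ``the main obstacle'', and that obstacle does not exist. By Definition~\ref{goodOreset}(i), a good Ore set satisfies $S_i\subseteq R''=\bigcup_n R''_n\subseteq\bigcup_n R_n$, so every $s_i\in S_i$ is already homogeneous; there are no ``lower-degree pieces''. If $s_i\in R''_d$ with $d\ge1$, then repeated use of the defining property of $R''$ gives $s_i^t\in R''_{td}\subseteq R_{td}\subseteq\bigoplus_{k\ge t}R_k\subseteq R_{\ge t}$ directly, and the paper's one-line argument goes through.

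Your fallback argument for a hypothetical non-homogeneous $s_i$ is also not quite right as stated: $R_{\ge t}$ is the \emph{smallest} two-sided $\mathsf{SG}$-ideal \emph{containing} $\bigoplus_{k\ge t}R_k$, so showing that $(R_{\ge1})^t$ is ``contained in $\bigoplus_{k\ge t}$-type pieces'' (even if that were true, which in the semi-graded setting it need not be, since products can drop degree) would not give $(R_{\ge1})^t\subseteq R_{\ge t}$. Fortunately none of this is needed; simply drop the detour and your proof matches the paper's.
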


If $R$ is schematic by considering the {\em good} left Ore sets $S_i$, then $\bigcap_{i=1}^n\kappa_{S_i}(M) = \kappa_+(M)$, for every $R$-module $M$. If $M$ is an $\mathsf{LSG}-$module, then for each $i = 1,\dots,n$ we have that $\kappa_{S_i}(M)$ is an $\mathsf{SG}$ submodule, and so $\kappa_+(M)$ is also an $\mathsf{SG}$ submodule. These facts imply that $M/\kappa_{+}(M)$ is an $\mathsf{SG}$ $R$-module, and so a submodule of $Q_{\kappa_+}(M)$. The idea is to show that $Q_{\kappa_+}(M)$ is semi-graded. For the remainder of the section we will take $\mathscr{L} := \mathscr{L}(\kappa_+)$.

\medskip

Let us start by taking an $\mathsf{LSG}$ $R$-module $M$ such that $\kappa_+(M)=0$. It is clear that $Q_{\kappa_+}(M) = M_{\mathscr{L}}$ and $\varphi_M(M)\cong M$. Thus, $\varphi_M(M)$ is a submodule of $M_\mathscr{L}$ which is an $\mathsf{SG}$ $R$-module where $\varphi_M(m)$ is homogeneous of degree $k$ if and only if $m$ is homogeneous of degree $k$. If we want $M_{\mathscr{L}}$ to be an $\mathsf{LSG}$ $R$-module, it must be satisfied that if $\xi$ is homogeneous of degree $k$, then for every $s\in R'$, the element $s\xi\in (\varphi_M(M))_{{\rm deg}(s)+k}$. Now, since there exists $I\in\mathscr{L}$ with $I\xi\subseteq \varphi_M(M)$, the following definition makes sense.

\begin{definition}
    Let $\xi\in M_{\mathscr{L}}$. We say that the element $\xi$ is {\em homogeneous of degree} $k$ if there exists $I\in\mathscr{L}$ such that $I\xi\subseteq\varphi_M(M)$, and for every element $s\in I\cap R'$, $s\xi\in (\varphi_M(M))_{{\rm deg}(s)+k}$.
\end{definition}

Notice that if the condition above is satisfied for $I$, then it also holds for every $J\subseteq I$. The following lemma shows that this condition is true for ideals containing $I$.

\begin{remark}
Since the good Ore sets $S_i$ are non-trivial, there exist elements $s'_i\in S_i\cap R_+$ for $i=1,\dotsc,n$, whence $\alpha_i = {\rm deg}(s'_i)>0$. If we define $m := {\rm lcm}\{\alpha_i\}_{1\le i\le n}$ and $s_i'':=(s'_i)^{m/\alpha_i}$, then we obtain $s_i''\in S_i\cap R_+$, and all of them have the same degree. Now, if we consider an element $I\in \mathscr{L}$, there exist $t,n\in \mathbb{Z}$ such that $(R_{\ge t})^n\subseteq I$. Thus, $s_i=(s_i'')^{tn}\in I\cap S_i$, which implies that $\sum_{i=1}^n Rs_i\subseteq I$. In this way, for each $I\in \mathscr{L}$ there exist elements $s_i\in S_i$, all with the same positive degree, satisfying the relation $\sum_{i=1}^n Rs_i\subseteq I$.
\end{remark}

\begin{lemma}\label{lemmasameproperty}
    Let $I, J\in\mathscr{L}$ be ideals such that $I\subseteq J$ and $I\xi,J\xi\subseteq \varphi_M(M)$. If for every $s\in I\cap R'$ the element $s\xi\in (\varphi_M(M))_{{\rm deg}(s)+k}$, then the same property holds for each  $s\in J\cap R'$. 
\end{lemma}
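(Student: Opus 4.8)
The plan is to reduce the claim to the defining property of $I$ together with two facts already available: $M$ is $\kappa_+$-torsion-free (here $\kappa_+(M)=0$), and $\bigcap_{i=1}^{n}\kappa_{S_i}(M)=\kappa_+(M)=0$, this last identity being exactly the schematicness of $R$ with respect to the good Ore sets $S_1,\dots,S_n$.

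First I would fix $s\in J\cap R'$. Since $J\xi\subseteq\varphi_M(M)$ and ${\rm Ker}(\varphi_M)=\kappa_+(M)=0$, there is a unique $m\in M$ with $s\xi=\varphi_M(m)$, and it suffices to prove that $m$ is homogeneous of degree ${\rm deg}(s)+k$. Writing $m=\sum_j m_j$ for its homogeneous decomposition, the goal becomes $m_j=0$ for every $j\neq {\rm deg}(s)+k$.

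Next I would produce suitable elements of $I$ that multiply $s$ into $I$. Since $\mathscr{L}$ is a filter and $I\in\mathscr{L}$, condition $T_3$ gives $(I:s)\in\mathscr{L}$; by the Remark preceding the lemma there are elements $s_i\in S_i$ (all of the same positive degree, although only $s_is\in I$ is needed here) with $\sum_i Rs_i\subseteq (I:s)$, so $s_is\in I$. Because $R'$ is closed under multiplication — an immediate check from the definition of the $R'_n$, which also shows ${\rm deg}(s_is)={\rm deg}(s_i)+{\rm deg}(s)$ — and $s_i\in S_i\subseteq R''\subseteq R'$, we get $s_is\in I\cap R'$. Applying the hypothesis for $I$ to $s_is$ yields $(s_is)\xi\in(\varphi_M(M))_{{\rm deg}(s_i)+{\rm deg}(s)+k}$; but $(s_is)\xi=s_i\varphi_M(m)=\varphi_M(s_im)$, so by injectivity of $\varphi_M$ the element $s_im$ is homogeneous of degree ${\rm deg}(s_i)+{\rm deg}(s)+k$.

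Finally I would compare homogeneous components. As $M$ is $\mathsf{LSG}$ and $s_i\in R'$, we have $s_im_j\in M_{{\rm deg}(s_i)+j}$, so the terms of $s_im=\sum_j s_im_j$ lie in pairwise distinct degrees; homogeneity of $s_im$ in degree ${\rm deg}(s_i)+{\rm deg}(s)+k$ then forces $s_im_j=0$ for all $j\neq{\rm deg}(s)+k$. Since $s_i\in S_i$, the left ideal $Rs_i$ belongs to $\mathscr{L}(S_i)$ and annihilates $m_j$, so $m_j\in\kappa_{S_i}(M)$; letting $i$ range over $1,\dots,n$ gives $m_j\in\bigcap_{i=1}^{n}\kappa_{S_i}(M)=\kappa_+(M)=0$, hence $m_j=0$, which completes the argument. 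The one place where care is needed is precisely this last step: a single Ore set $S_i$ only detects $\kappa_{S_i}$-torsion, so all $n$ of them must be invoked at once, and this is exactly where the schematicness hypothesis is essential; the remaining steps are routine bookkeeping with degrees.
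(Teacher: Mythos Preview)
Your proof is correct and follows essentially the same approach as the paper's: both obtain elements $s_i\in S_i$ with $s_is\in I\cap R'$, apply the hypothesis to conclude $s_is\xi$ is homogeneous of the right degree, and then use the $\mathsf{LSG}$ property together with $\bigcap_i\kappa_{S_i}=\kappa_+=0$ to kill the unwanted homogeneous components. The only cosmetic difference is that the paper first replaces $I$ by a two-sided ideal $(R_{\ge t})^m$ to get $s_is\in I$, whereas you use the filter axiom $T_3$ to land the $s_i$ inside $(I:s)$; both routes are equally valid.
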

\begin{proof}
Let $s\in J\cap R'$. Then $s\xi\in\varphi_M(M)$ and so there exist homogeneous elements $\xi_j,\ j=l_1,\dots,l_r$ of $\varphi_M(M)$ with $\xi_j\in(\varphi_M(M))_j$ and such that $s\xi = \sum \xi_j$. As we said before, if the property holds for $I$ then it is true for any ideal contained in $I$, so we can take $I=(R_{\ge t})^m$, for some $t,m\in\mathbb{N}$. From above, there exist $s_i\in S_i$ for $i=1,\dots,n$ such that $\sum Rs_i\subseteq I$ and ${\rm deg}(s_i) = \beta$ for each $1\leq i\leq n$. In particular, every element $s_i\in I$ whence $s_is\in I$ (recall that $I$ is a two-sided ideal). By assumption, $s_is\xi\in(\varphi_M(M))_{{\rm deg}(s)+\beta+k}$ for each $i$.

On the other hand, if we consider the expression $s_is\xi=\sum s_i\xi_j$ in terms of homogeneous elements of $\varphi_M(M)$, then for each $j\neq k + {\rm deg}(s)$ the equality $s_i\xi_j=0$ holds. Since this is true for every $i$, it follows that $\xi_j\in\bigcap_{i=1}^n\kappa_{S_i}(\varphi_M(M))=\kappa_+(\varphi_M(M))=0$ (recall that $\kappa_+(M)=0$). Therefore, $s\xi=\xi_{{\rm deg}(s)+k}$.
\end{proof}

From Lemma \ref{lemmasameproperty}, it is sufficient to guarantee the property by considering any ideal $I$ such that $I\xi\subseteq \varphi_M(M)$. Our purpose is to give a more simple method to verify that the element $\xi$ is homogeneous. Let $\xi=[I,f]$. Since $I\xi\subseteq \varphi_M(M)$, the element $\xi$ is homogeneous of degree $k$ if and only if for each $s\in I\cap R'$ the element $s\xi=[R,\beta(f(s))]=\varphi_M(f(s))\in(\varphi_M(M))_{{\rm deg}(s)+k}$, or equivalently, for all $s\in I\cap R'$, the element $f(s)\in M_{{\rm deg}(s)+k}$.

\medskip

For a morphism $f:I\rightarrow M$, we will say that $f$ is {\em homogeneous of degree} $k$ if for each $s\in I\cap R'$, the element $f(s)$ is homogeneous of degree ${\rm deg}(s)+k$. Hence, $[I,f]$ is homogeneous of degree $k$ (in $M_\mathscr{L}$) if and only if $f$ is homogeneous of degree $k$. Let $(M_{\mathscr{L}})_k$ be the family of homogeneous elements of degree $k$. It is clear that $(M_{\mathscr{L}})_k$ is a subgroup and $\varphi_M(M_k) \subseteq (M_{\mathscr{L}})_k$. 

\begin{remark}
We will say that the morphism $f:I\rightarrow M$ is {\em strongly homogeneous of degree} $k$ if for every homogeneous element $s\in I$, the element $f(s)$ is homogeneous of degree ${\rm deg}(s)+k$. It is clear that in the setting of graded rings, the notions of homogeneous morphism and strongly homogeneous morphism coincide.

\medskip

On the other hand, $[I,f]$ it will be called {\em strongly homogeneous of degree} $k$ if some of its representative elements is strongly homogeneous of degree $k$. Let $(\overline{M_{\mathscr{L}}})_k$ be the family of strongly homogeneous elements of degree $k$. It is straightforward to see that $\overline{R_{\mathscr{L}}} = \bigoplus (\overline{R_{\mathscr{L}}})_k$ is a graded ring and  $\overline{M_{\mathscr{L}}} = \bigoplus (\overline{M_{\mathscr{L}}})_k$ is an  $\overline{R_{\mathscr{L}}}$-graded module. Note also that if $s\in R''$ then $\varphi_R(s)\in \overline{R_{\mathscr{L}}}$; in particular $\overline{R_{\mathscr{L}}}$, is an extension of the graded ring $\bigoplus R''_k$. As it is clear, $\overline{R_{\mathscr{L}}}$ is an $R$-submodule of $R_{\mathscr{L}}$ if and only if $R$ is graded. This last remark shows that in the setting of non-graded rings is not appropriate to consider strongly homogeneous morphisms.
\end{remark}
        
\begin{proposition}\label{directsum}
The sum $\sum (M_{\mathscr{L}})_k$ is direct.
\end{proposition}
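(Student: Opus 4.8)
The plan is to show that if $\xi_1 + \cdots + \xi_r = 0$ with $\xi_j \in (M_{\mathscr{L}})_{k_j}$ homogeneous of pairwise distinct degrees $k_1, \dots, k_r$, then each $\xi_j = 0$. First I would choose, for each $j$, an ideal $I_j \in \mathscr{L}$ witnessing homogeneity of $\xi_j$, i.e. $I_j \xi_j \subseteq \varphi_M(M)$ and $s\xi_j \in (\varphi_M(M))_{{\rm deg}(s)+k_j}$ for all $s \in I_j \cap R'$. Replacing each $I_j$ by $\bigcap_j I_j$ (which lies in $\mathscr{L}$ by $T_2$, and still works by the remark following the definition of homogeneous element), and then shrinking further, I may assume all the $\xi_j$ are witnessed by a common ideal of the form $I = (R_{\ge t})^m$ for suitable $t, m \in \mathbb{N}$. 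By the Remark preceding Lemma \ref{lemmasameproperty}, inside $I$ there are elements $s_i \in S_i$ ($i = 1, \dots, n$), all of the same positive degree $\beta$, with $\sum_i R s_i \subseteq I$; in particular each $s_i \in I \cap R'$ (recall $S_i \subseteq R'' \subseteq R'$ since the $S_i$ are good).

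Next I would apply each $s_i$ to the relation: from $\sum_j \xi_j = 0$ we get $\sum_j s_i \xi_j = 0$, and by the choice of $I$ each term $s_i\xi_j$ is homogeneous of degree ${\rm deg}(s_i) + k_j = \beta + k_j$. Since the $k_j$ are pairwise distinct, the degrees $\beta + k_j$ are pairwise distinct, and these homogeneous elements live in $\varphi_M(M) \cong M$, which is a genuine semi-graded module whose direct-sum decomposition is honest. Hence $s_i \xi_j = 0$ for every $i$ and every $j$. Fixing $j$, this says $\xi_j \in \kappa_{S_i}(\varphi_M(M))$ for each $i$ (as $s_i \in S_i$ annihilates $\xi_j$ and $\mathscr{L}(S_i)$ is generated by such left ideals). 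Therefore $\xi_j \in \bigcap_{i=1}^n \kappa_{S_i}(\varphi_M(M)) = \kappa_+(\varphi_M(M))$ by schematicness, and this is $0$ because $\varphi_M(M) \cong M$ with $\kappa_+(M) = 0$. Thus each $\xi_j = 0$, proving directness.

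The one point requiring care — and the main obstacle — is the reduction to a common witnessing ideal and the verification that the $s_i$ genuinely annihilate each $\xi_j$ whenever the homogeneous degrees don't match. This is handled by Lemma \ref{lemmasameproperty} together with the fact that $s_i \in I \cap R'$, so that $s_i \xi_j$ is a well-defined homogeneous element of $\varphi_M(M)$ of the predicted degree; once the elements are known to be homogeneous of distinct degrees in the honest module $\varphi_M(M)$, vanishing is immediate, and the schematicness hypothesis does the rest. I would also remark that the argument is essentially the one used inside the proof of Lemma \ref{lemmasameproperty}, so the write-up can be kept short by pointing back to it.
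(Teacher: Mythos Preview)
Your proof is correct and follows essentially the paper's route: pass to a common witnessing ideal containing elements $s_i\in S_i$ of equal positive degree, apply each $s_i$ to the relation $\sum_j\xi_j=0$, and use the honest direct-sum decomposition of $\varphi_M(M)\cong M$ to force $s_i\xi_j=0$ for all $i,j$. One small slip to fix: in your concluding step you should write $\xi_j\in\kappa_{S_i}(M_{\mathscr{L}})$ rather than $\kappa_{S_i}(\varphi_M(M))$, since $\xi_j$ lives in $M_{\mathscr{L}}$ and not in $\varphi_M(M)$ (the argument then finishes because $M_{\mathscr{L}}=Q_{\kappa_+}(M)$ is $\kappa_+$-torsion-free); the paper instead concludes more concretely by observing that $f_j(s_i)=0$ for all $i$ and $J=\sum_i Rs_i$ give $f_j|_J=0$, whence $[I_j,f_j]=[J,0]=0$, avoiding any appeal to torsion-freeness of $M_{\mathscr{L}}$.
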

\begin{proof}
Let $[I_i,f_i]\in (M_{\mathscr{L}})_{k_i}$ for $i = 1,\dots,m$ with $k_i\neq k_j$ if $i\neq j$. Notice that if $\sum [I_i,f_i]=0$, then there exists $J\subseteq \bigcap I_i$, $J\in\mathscr{L}$, such that $(\sum f_i)|_J=\sum f_i|_J=0$. We can take $J=\sum Rs_j$ for some $s_j\in S_j$. Let $s\in J\cap R'$ with ${\rm deg}(s) = l$. Then $0=(\sum f_i)(s)=\sum f_i(s)$, and since  $f_i(s)$ is homogeneous of degree $l+k_i$ and all elements $k_i$ are different, then we have a sum of homogeneous elements of different degrees equal to zero, whence $f_i(s)=0$, for each $i$. In particular, $f_i(s_j)=0$, for all $i, j$. Therefore, $f_i(x)=0$ for all $x\in J$, and so $[J,f_i|J]=[I_i,f_i]=0$.
\end{proof}

Let $[I,f]\in M_\mathscr{L}$ with $I=\sum_{i=1}^n Rs_i$, for some elements $s_i\in S_i\cap R''_{k}$. Since there are finitely $s_i$'s, we can consider that the homogeneous decompositions of the elements $f(s_i)$ have the same length, say $f(s_i)=\sum_{t=\alpha}^\beta (f(s_i))_{t+k}$, where $(f(s_i))_j$ is the $j$-th homogeneous component of $f(s_i)$. By taking  $f_t(s_i)=(f(s_i))_{t+k}$, we have  $f(s_i)=\sum_{t=\alpha}^\beta f_t(s_i)$. For elements $t=\alpha,\dots,\beta$, we define the maps $f_t: I\rightarrow M$ in the natural way as $f_t(\sum a_is_i)=\sum a_if_t(s_i)$. However, we have to prove that these maps are well defined. This is the content of the following proposition.

\begin{proposition}\label{MHTf_t}
$f_t$ is well defined for every element $t=\alpha,\dots,\beta$.
 \end{proposition}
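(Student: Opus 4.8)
The plan is to show that the maps $f_t$ are independent of the chosen representation $\sum a_i s_i$ of an element of $I = \sum_{i=1}^n R s_i$. Concretely, I would suppose $\sum_{i=1}^n a_i s_i = \sum_{i=1}^n b_i s_i$ in $R$ (equivalently $\sum_i (a_i - b_i) s_i = 0$), and aim to prove $\sum_i (a_i - b_i) f_t(s_i) = 0$ for each $t$, i.e. $\sum_i c_i f_t(s_i) = 0$ whenever $\sum_i c_i s_i = 0$. Since $f_t$ is additive in the coefficients, it suffices to handle a single relation $\sum_i c_i s_i = 0$.

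First I would pass to a common homogeneous refinement. The $s_i$ all lie in $R''_k$ (same degree $k$) and $f(s_i) = \sum_{t=\alpha}^{\beta} f_t(s_i)$ with $f_t(s_i) = (f(s_i))_{t+k}$ homogeneous of degree $t+k$. Write each coefficient $c_i = \sum_j (c_i)_j$ in homogeneous components. Because each $s_i \in R''$, multiplication $h \mapsto h s_i$ preserves degrees exactly: $(c_i)_j s_i \in R_{j+k}$. The relation $\sum_i c_i s_i = 0$ then splits by degree into $\sum_i (c_i)_j s_i = 0$ for every $j$. So it is enough to treat the case where all $c_i$ are homogeneous of a single degree $j$, and show $\sum_i c_i f_t(s_i) = 0$ for every $t$ — but now the left-hand side $\sum_i c_i f_t(s_i)$ is itself homogeneous: $c_i f_t(s_i)$, with $c_i$ homogeneous of degree $j$ and $f_t(s_i)$ homogeneous of degree $t+k$, lands in $\bigoplus_{\ell \le j+t+k} M_\ell$, and I would want its top component to vanish and iterate, or better, use that $f$ respects the module structure.

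The cleanest route is: since $f: I \to M$ is an $R$-homomorphism, from $\sum_i c_i s_i = 0$ we get $0 = f(\sum_i c_i s_i) = \sum_i c_i f(s_i) = \sum_i \sum_t c_i f_t(s_i)$. Now I would argue degree-by-degree. Fix the homogeneous case $c_i \in R_j$ as above. The summand $c_i f_t(s_i)$ contributes to degrees $\le j + t + k$. Looking at the highest degree $\beta + j + k$, only the $t = \beta$ terms contribute their top part, and by the $\mathsf{LSG}$/good-Ore-set bookkeeping (multiplication by homogeneous elements of $R$ lowers or preserves degree, with the top piece behaving multiplicatively on the associated graded), the degree-$(\beta+j+k)$ component of $0 = \sum_i c_i f(s_i)$ forces $\sum_i (c_i)\cdot(\text{leading part of } f_\beta(s_i)) = 0$; descending induction on $t$ from $\beta$ down to $\alpha$ then peels off $\sum_i c_i f_t(s_i) = 0$ one degree at a time. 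Summing over the homogeneous pieces of the $c_i$ gives well-definedness of each $f_t$.

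The main obstacle is precisely this last degree-filtration argument: in the semi-graded setting $c_i f_t(s_i)$ is not homogeneous, only filtered, so one cannot simply match degrees as in the graded case. The work is in checking that the leading-term map on the associated graded is well-behaved enough that a vanishing filtered sum forces vanishing of each graded layer — this is where the hypothesis $s_i \in R''$ (so that right multiplication by $s_i$ is degree-exact) and the $\mathsf{LSG}$ condition are essential, and I expect the proof to invoke these to reduce to a genuinely graded computation. One should also double-check that $f_t$ is indeed $R$-linear once well-defined, which follows formally from the definition $f_t(\sum a_i s_i) = \sum a_i f_t(s_i)$ together with well-definedness, since any $a \in R$ and $x = \sum a_i s_i \in I$ give $ax = \sum (a a_i) s_i$.
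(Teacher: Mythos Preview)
Your reduction steps are correct and match the paper: reducing to a single relation $\sum_i c_i s_i = 0$, and then decomposing the coefficients $c_i$ into homogeneous pieces using $s_i \in R''$ so that the relation splits degree-by-degree. The paper does exactly this in its parts (2) and (3).

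The genuine gap is in the core homogeneous case. Your descending-induction argument on the filtration does not work, and you essentially diagnose the problem yourself: with $c_i \in R_j$ a general homogeneous element, the product $c_i f_t(s_i)$ lies only in $\bigoplus_{\ell \le j+t+k} M_\ell$, not in $M_{j+t+k}$. So from $0 = \sum_{i,t} c_i f_t(s_i)$ the top degree $j+\beta+k$ only gives $\sum_i \bigl(c_i f_\beta(s_i)\bigr)_{j+\beta+k} = 0$, not $\sum_i c_i f_\beta(s_i) = 0$. At the next degree down, the lower-degree residue of the $t=\beta$ terms mixes inextricably with the top piece of the $t=\beta-1$ terms, and there is no ``associated graded'' mechanism to separate them: the $\mathsf{LSG}$ condition says $R'$ acts degree-exactly, but your $c_i$ are arbitrary homogeneous elements of $R$, not of $R'$.

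The paper's argument is structurally different and does not use a filtration argument at all. For each fixed index $i$ it invokes the \emph{good} Ore condition on $S_i$ (Definition~\ref{goodOreset}(ii)) to produce, for every $j$, elements $u_j \in R'$ and $v_j \in S_i$ with $u_j s_i = v_j s_j$. Because $u_j, v_j \in R'$ and $M$ is $\mathsf{LSG}$, applying $f$ and comparing homogeneous components gives $u_j f_t(s_i) = v_j f_t(s_j)$ \emph{exactly}, for each $t$ separately. A further Ore-set manipulation then transports the relation $\sum_j c_j s_j = 0$ into an equation $s r = 0$ with $s \in S_i$, and one computes that a suitable element of $S_i$ annihilates $\sum_j c_j f_t(s_j)$. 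Thus $\sum_j c_j f_t(s_j) \in \kappa_{S_i}(M)$ for every $i$, and schematicness plus $\kappa_+(M) = 0$ forces it to vanish. The point you are missing is that the good Ore condition is precisely what lets you replace arbitrary coefficients by elements of $R'$, where degree-matching is legitimate; without that move, no purely filtered argument will close the gap.
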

\begin{proof}
We divide the proof in three parts.
\begin{itemize}
    \item Suppose that $0=\sum a_is_i$, with $a_i\in R_{k_1}$ for every $i$ (recall that $s_i\in R''_{k}$). Fix $i$. Since $s_j\in R''$ for each $1\leq j\leq n$, there exist elements $u_j\in R'$ and $v_j\in S_i$ such that $u_js_i=v_js_j$. In particular, ${\rm deg}(u_j) = {\rm deg}(v_j)$, and since $u_j,v_j\in R'$, $u_jf(s_i)=v_jf(s_j)$, and $M$ is $\mathsf{LSG}$, if we compare the homogeneous components of the same degree, then we obtain that $u_jf_t(s_i) = v_jf_t(s_j)$, for each $\alpha\leq t\leq \beta$.  

Now, by using that $v_1\in S_i$ and $a_1\in R$, there exist elements $b_1\in R$ and $c_1\in S_i$ such that $b_1v_1 = c_1a_1$. Repeating this argument with the elements $v_2$ and $c_1a_2$, we find that $b_2\in R$ and $c_2\in S_i$ satisfy the equality $b_2v_2 = c_2c_1a_2$. Continuing in this way, for every $1\leq j\leq n$ we will find elements $b_j\in R$ and $c_j\in S_i$ such that $b_jv_j = \prod_{i=1}^jc_ia_j$ (notice that the elements $b_i$'s can be taken homogeneous). If we define $c := \prod_{i=1}^nc_i\in S_i$ and $d_j = \prod_{i=j+1}^nc_ib_j$, then we have $d_jv_j = ca_j$, for every $1\leq j\leq n$. Hence $0 = c\sum_{j=1}^n a_js_j = \sum_{j=1}^n d_jv_js_j = \sum_{j=1}^nd_ju_js_i = rs_i$, where $r = \sum_{j=1}^nd_ju_j$. Note that the elements $d_ju_j$ are homogeneous of the same degree, which implies that $r$ is also homogeneous. Since $0 = rs_i$, by the first condition of the noncommutative localization, there exists an element $s\in S_i$ such that $sr = 0$. 

Now, considering the equalities
\[
sc\sum_{j=1}^na_jf_t(s_j) = s\sum_{j=1}^nd_jv_jf_t(s_j) = s\sum_{j=1}^nd_ju_jf_t(s_i) = sr f_t(s_i) = 0,
\]
it follows that  $\sum_{j=1}^na_jf_t(s_j)\in\kappa_{S_i}(M)$. Since this holds for every element $i$, we have that $\sum_{j=1}^na_jf_t(s_j)\in\bigcap\kappa_{S_i}(M) = \kappa_+(M)=0$, whence $\sum_{j=1}^na_jf_t(s_j)=0$.

\item Suppose that $0 = \sum_{i=1}^n a_is_i$ (the elements $a_i$'s are not necessarily homogeneous). Since there are only finitely elements $a_i$'s, we can consider the sum $a_i = \sum_{j=l_1}^{l_2}b_{i,j}$, with $b_{i,j}\in R_j$. In this way, $0 = \sum_{i=1}^n\sum_{j=l_1}^{l_2} b_{i,j}s_i = \sum_{j=l_1}^{l_2}\sum_{i=1}^n b_{i,j}s_i$. Now, using that $\sum_{i=1}^n b_{i,j}s_i\in R_{j+k}$ is the homogeneous component of degree $j+k$, it follows that $0 = \sum_{i=1}^n b_{i,j}s_i$. By the first part above, we can assert that $\sum_{i=1}^nb_{i,j}f_t(s_i)=0$, whence $0 = \sum_{j=l_1}^{l_2}\sum_{i=1}^nb_{i,j}f_t(s_i)=\sum_{i=1}^n\sum_{j=l_1}^{l_2}b_{i,j}f_t(s_i)=\sum_{i=1}^na_if_t(s_i)$, for $\alpha\leq t\leq\beta$.

\item Let $r$ be an element of $\sum Rs_i$. Suppose that we have two expressions for $r$ given by $r=\sum a_is_i=\sum b_is_i$. Then $0 = \sum (a_i-b_i)s_i$. By the second part above, $\sum(a_i-b_i)f_t(s_i) = 0$, and so $\sum a_if_t(s_i) = \sum b_if_t(s_i)$, for $\alpha\leq t\leq\beta$. This means that the expression for $f_t(r)$ does not depend of the decomposition of $r$.
\end{itemize}
\end{proof}

From the proof of Proposition \ref{MHTf_t} it follows that the maps $f_t$'s are $R-$ho\-mo\-mor\-phisms. The next proposition establishes that these are homogeneous of degree $t$.

\begin{proposition}\label{prop.f_t.is.homogeneous}
The map $f_t$ is homogeneous of degree $t$.
\end{proposition}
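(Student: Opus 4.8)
The plan is to unwind the definition of ``homogeneous of degree $t$'' for the morphism $f_t : I \to M$: I must check that for every $s \in I \cap R'$, the element $f_t(s)$ is homogeneous of degree ${\rm deg}(s) + t$. The natural first move is to reduce from an arbitrary $s \in I \cap R'$ to the generators $s_i$ of $I = \sum_{i=1}^n Rs_i$, for which the statement holds essentially by construction: recall $s_i \in S_i \cap R''_k$ and $f_t(s_i) = (f(s_i))_{t+k}$, which is homogeneous of degree $t + k = t + {\rm deg}(s_i)$. So the genuine content is propagating this from the $s_i$ to an arbitrary homogeneous-degree element $s$ of $I \cap R'$.

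\textbf{Key steps.} First I would fix $s \in I \cap R'$ and write $s = \sum_{i=1}^n a_i s_i$. Since $s$ is homogeneous, say of degree $\ell$, and each $s_i$ has degree $k$, I may assume (arguing component-by-component exactly as in the second part of the proof of Proposition \ref{MHTf_t}) that each $a_i \in R_{\ell - k}$. Then $f_t(s) = \sum_{i=1}^n a_i f_t(s_i)$ by definition of $f_t$. Now the subtlety: the $a_i$ need not lie in $R'$, so $a_i f_t(s_i)$ need not be homogeneous on the nose — only that $a_i f_t(s_i) \in \bigoplus_{j \le \ell + t} M_j$. To pin down the top component I would use the good-Ore-set machinery as in the first part of Proposition \ref{MHTf_t}: pick elements $s_i' \in S_i$ of a common positive degree, use the Ore condition repeatedly to produce $c \in S_i$ and homogeneous $d_j, u_j$ with $d_j v_j = c a_j$ and $u_j s_i = v_j s_j$, so that $c \cdot s = c\sum a_j s_j = \sum d_j u_j s_i = r s_i$ with $r = \sum d_j u_j$ homogeneous of degree $\ell - k$. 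Because $M$ is $\mathsf{LSG}$ and $r, s_i' $-type elements and the $d_j u_j$ live in $R'$, I can compare homogeneous components: applying this to $f_t$ gives $c \sum_j a_j f_t(s_j) = r f_t(s_i)$, and $r f_t(s_i) \in M_{(\ell - k) + (t + k)} = M_{\ell + t}$ since $r \in R'$ and $f_t(s_i)$ is homogeneous of degree $t+k$. Finally, to conclude $f_t(s)$ itself is homogeneous I would show that every homogeneous component of $f_t(s)$ of degree $\ne \ell + t$ is killed by each $S_i$ — multiply $f_t(s) = \sum_j (f_t(s))_j$ by the chosen $c_i \in S_i$ (all of positive degree, as in the Remark preceding Lemma \ref{lemmasameproperty}) and use that $c_i f_t(s)$ is homogeneous to force the off-degree components into $\kappa_{S_i}(M)$; intersecting over $i$ gives them in $\bigcap_i \kappa_{S_i}(M) = \kappa_+(M) = 0$. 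Hence $f_t(s) = (f_t(s))_{\ell + t}$ is homogeneous of degree ${\rm deg}(s) + t$, which is exactly the assertion.

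\textbf{Main obstacle.} The crux is the second paragraph's passage from ``$a_i f_t(s_i)$ lies in a lower-truncation'' to ``the relevant component of $f_t(s)$ is exactly in degree $\ell + t$'': since $R$ is merely semi-graded, multiplication by non-$R'$ elements scrambles degrees downward, so one cannot argue naively by grading. The bookkeeping with the Ore-condition chain (producing homogeneous $r$ and a denominator $c \in S_i$) is precisely what is needed to circumvent this, and it is the step most likely to require care — in particular ensuring all the auxiliary elements $d_j, u_j, r$ can genuinely be chosen homogeneous, which follows from $s_i \in R''$ and $R'$ being closed under the operations used, exactly as recorded in the proof of Proposition \ref{MHTf_t}. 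Everything else (the reduction to homogeneous $a_i$, the final $\kappa_+$-triviality argument) is a direct transcription of techniques already established in Lemma \ref{lemmasameproperty} and Proposition \ref{MHTf_t}.
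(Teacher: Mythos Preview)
Your overall architecture is right---show that each homogeneous component of $f_t(s)$ in degree $\neq \ell + t$ lies in $\bigcap_i \kappa_{S_i}(M) = \kappa_+(M) = 0$---but the route you take has a genuine gap. You assert that $r = \sum_j d_j u_j$ lies in $R'$, so that $c\, f_t(s) = r f_t(s_i)$ is homogeneous. But in the Ore chain borrowed from Proposition~\ref{MHTf_t}, the elements $d_j$ arise from the \emph{ordinary} left Ore condition (applied to $v_j \in S_i$ and the coefficients $a_j \in R$), which produces $d_j$ merely homogeneous, not in $R'$. And $R'$ is not stable under left multiplication by arbitrary homogeneous elements: for $d_j \in R_p$ and $u_j \in R'_q$, the product $d_j u_j$ acts on $h \in R_m$ by $d_j(u_j h)$, with $u_j h \in R_{q+m}$ but $d_j(u_j h)$ only in the truncation $\bigoplus_{r \le p+q+m} R_r$. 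So in general $r \notin R'$, and $r f_t(s_i)$ lies only in $\bigoplus_{p \le {\rm deg}(c) + \ell + t} M_p$, not in a single degree. Your comparison of components then kills only the pieces of $f_t(s)$ of degree strictly \emph{greater} than $\ell + t$; the components of lower degree survive. (Your ``main obstacle'' paragraph flags exactly this step, but then claims only that $r$ can be taken \emph{homogeneous}, which is weaker than the $r \in R'$ you actually use.)

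The paper's proof avoids the decomposition $s = \sum a_j s_j$ entirely. It applies condition~(ii) of Definition~\ref{goodOreset} directly to the pair $s_i \in S_i$ and $s \in R'$: this yields $u_i \in R'$ and $v_i \in S_i$ with $u_i s_i = v_i s$ in a single step. Applying the $R$-linear map $f_t$ gives $v_i f_t(s) = u_i f_t(s_i)$, and now $u_i \in R'$ is \emph{guaranteed by the good-Ore axiom}, so $u_i f_t(s_i) \in M_{{\rm deg}(u_i) + t + k} = M_{{\rm deg}(v_i) + \ell + t}$ exactly. Since $v_i \in R''$, left multiplication by $v_i$ preserves homogeneous components, and comparing degrees forces $v_i(f_t(s))_m = 0$ for every $m \neq \ell + t$, hence $(f_t(s))_m \in \kappa_{S_i}(M)$. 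The key point you missed is that the very hypothesis $s \in R'$ is what makes the \emph{good} Ore condition applicable, delivering the needed $R'$-element in one move without any chain.
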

\begin{proof}
Consider $s\in I\cap R'$ with ${\rm deg}(s) = l$. Let $(f_t(s))_m$ be the homogeneous component of degree $m$ in the expression of $f_t(s)$. For a fixed $i$, there exist elements $v_i\in S_i$ and $u_i\in R'$ such that $u_is_i = v_is$, which implies that $v_if_t(s)=u_if_t(s_i)$. Since $f_t(s_i)\in M_{t+k}$, $u_i,v_i\in R''$ and $M$ is $\mathsf{LSG}$, when we compare the homogeneous components of these elements, we have that if $m\neq t+l$ then $v_i(f_t(s))_m=0$, whence $(f_t(s))_m\in \kappa_{S_i}(\varphi_M(M))$. Since this fact holds for every $i$, it follows that $(f_t(s))_m\in \bigcap \kappa_{S_i}(\varphi_M(M)) = \kappa_+(\varphi_M(M))=0$. Therefore, $f_t(s) = (f_t(s))_{t+l}$, which asserts that $f_t$ is homogeneous of degree $t$.
 \end{proof}

Propositions \ref{directsum}, \ref{MHTf_t} and \ref{prop.f_t.is.homogeneous} imply the following important result.

\begin{proposition}\label{preliminartheorem}
    If $M$ is an $\mathsf{LSG}$ $R$-module with $\kappa_+(M)=0$, then $Q_{\kappa_+}(M) = M_{\mathscr{L}}$ is an $\mathsf{LSG}$ $R$-module with semigraduation given by 
    \[ 
    M_{\mathscr{L}} = \bigoplus_k (M_{\mathscr{L}})_k.
    \]
\end{proposition}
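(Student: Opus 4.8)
The plan is to assemble Proposition~\ref{preliminartheorem} directly from the three preceding propositions, checking that the pieces fit together into a semi-graduation of $M_{\mathscr{L}}$ that additionally satisfies the $\mathsf{LSG}$ condition. First I would record that $(M_{\mathscr{L}})_k$ is a subgroup of $M_{\mathscr{L}}$ for every $k$ (already noted just before Proposition~\ref{directsum}), and that by Proposition~\ref{directsum} the sum $\sum_k (M_{\mathscr{L}})_k$ is direct. The crucial point to establish is surjectivity, i.e.\ that every $\xi = [I,f] \in M_{\mathscr{L}}$ decomposes as a finite sum of homogeneous elements. For this I would invoke the Remark preceding Lemma~\ref{lemmasameproperty}: given $\xi = [I,f]$, there exist elements $s_i \in S_i \cap R''_k$ (all of the same positive degree $k$) with $\sum_{i=1}^n R s_i \subseteq I$, so replacing $I$ by $I' := \sum_{i=1}^n R s_i$ we may assume $\xi = [I', f|_{I'}]$ with $I'$ of this special form. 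Then the construction preceding Proposition~\ref{MHTf_t} produces maps $f_t : I' \to M$ for $t = \alpha, \dots, \beta$ with $f|_{I'} = \sum_{t=\alpha}^{\beta} f_t$ on generators; Proposition~\ref{MHTf_t} says each $f_t$ is well defined (and, as remarked, an $R$-homomorphism), and Proposition~\ref{prop.f_t.is.homogeneous} says $f_t$ is homogeneous of degree $t$. Hence $\xi = \sum_{t=\alpha}^{\beta} [I', f_t]$ with $[I', f_t] \in (M_{\mathscr{L}})_t$, giving the decomposition.

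Next I would verify that this genuinely is a semi-graduation in the sense of the definition for $\mathsf{SG}$ modules, namely that $R_a (M_{\mathscr{L}})_b \subseteq \bigoplus_{j \le a+b} (M_{\mathscr{L}})_j$, and then upgrade this to the $\mathsf{LSG}$ inclusion $R'_a (M_{\mathscr{L}})_b \subseteq (M_{\mathscr{L}})_{a+b}$. For the latter: take $r \in R'_a$ and a homogeneous $\xi = [I, f] \in (M_{\mathscr{L}})_b$; recall $r\xi = [r^{-1}(I), f \circ \beta(r)]$ in the ring/module action, and one checks that for $s \in r^{-1}(I) \cap R'$ one has $sr \in I \cap R'$ (using $r \in R'$, and shrinking $I$ if necessary to an ideal of the form $(R_{\ge t})^m$ as in Lemma~\ref{lemmasameproperty} so that $sr$ lands in $I$), whence $(f \circ \beta(r))(s) = f(sr) \in M_{\mathrm{deg}(sr)+b} = M_{\mathrm{deg}(s)+a+b}$, so $r\xi$ is homogeneous of degree $a+b$. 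The general $R_a$ case then follows by writing a homogeneous-degree-$a$ piece's image as a finite sum of contributions of degrees $\le a$ exactly as in the proof of Proposition~\ref{localization-module}, or simply by noting $\varphi_M$ is compatible with the $R$-action and reducing to the computation already done there. Finally I would remark that since $\kappa_+(M) = 0$ we have $Q_{\kappa_+}(M) = M_{\mathscr{L}}$ by Definition~\ref{quotientmoduleMwrtQ}, so the statement about $Q_{\kappa_+}(M)$ is immediate.

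The main obstacle is the reduction step: making sure that an arbitrary $\xi = [I,f]$ can be represented over an ideal of the special form $\sum_{i=1}^n R s_i$ with $s_i \in S_i \cap R''_k$ all of equal positive degree, so that the machinery of Propositions~\ref{MHTf_t} and \ref{prop.f_t.is.homogeneous} applies verbatim. This is where the \emph{good} Ore set hypothesis and non-triviality are really used, via the Remark before Lemma~\ref{lemmasameproperty}, together with Lemma~\ref{lemmasameproperty} itself to guarantee that the homogeneity test is insensitive to which such $I$ we pick. Once that reduction is in hand, the rest is bookkeeping: combining the direct-sum statement, the surjectivity of the decomposition, and the module-action inclusion, all of which are either already proved or routine variants of computations appearing earlier in the section.
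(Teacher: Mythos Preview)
Your proposal is correct and follows exactly the route the paper intends: the paper's ``proof'' is the single sentence preceding the proposition, namely that Propositions~\ref{directsum}, \ref{MHTf_t} and \ref{prop.f_t.is.homogeneous} together imply the result, and you have simply spelled out how these pieces assemble (directness, surjectivity via the $f_t$ decomposition after passing to an ideal $\sum_i Rs_i$, and the module-action checks). Your verification of the $\mathsf{SG}$ and $\mathsf{LSG}$ conditions on $M_{\mathscr{L}}$ is in fact more explicit than the paper, which leaves these implicit; the sketch you give for $R'_a(M_{\mathscr{L}})_b \subseteq (M_{\mathscr{L}})_{a+b}$ is fine, and your appeal to the Ore-style manoeuvre of Proposition~\ref{localization-module} for the general $R_a$ inclusion is the right idea (one moves $r$ past an $s_i\in S_i\cap R''$ via the Ore condition, lands in $\varphi_M(M)$ in degrees $\le \deg(s_i)+a+b$, and kills higher components using $\bigcap_i\kappa_{S_i}=\kappa_+$ and $\kappa_+$-torsion-freeness of $M_{\mathscr{L}}$).
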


\begin{theorem}
If $M$ is an $\mathsf{LSG}$ $R-$module, then $Q_{\kappa_+}(M)$ is an $\mathsf{LSG}$ $R-$module.
\end{theorem}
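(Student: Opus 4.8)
The plan is to reduce the general case to the torsion-free case already handled in Proposition~\ref{preliminartheorem}. Given an arbitrary $\mathsf{LSG}$ $R$-module $M$, set $N := M/\kappa_+(M)$. As observed just before Proposition~\ref{preliminartheorem}, the hypothesis that $R$ is schematic (via the good Ore sets $S_i$) guarantees $\kappa_+(M) = \bigcap_{i} \kappa_{S_i}(M)$ is an $\mathsf{SG}$ submodule of $M$, and since each $\kappa_{S_i}(M)$ is in fact an $\mathsf{LSG}$ submodule, $\kappa_+(M)$ is an $\mathsf{LSG}$ submodule; hence the quotient $N$ inherits the structure of an $\mathsf{LSG}$ $R$-module with semi-graduation $(M/\kappa_+(M))_n = (M_n + \kappa_+(M))/\kappa_+(M)$. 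By construction $\kappa_+(N) = 0$ because $\kappa_+$ is a radical (condition $T_4$ / idempotency of $\mathscr{L}$).

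Next I would invoke the defining identity $Q_{\kappa_+}(M) = Q_{\kappa_+}(M/\kappa_+(M)) = N_{\mathscr{L}}$ from Definition~\ref{quotientmoduleMwrtQ}. Since $N$ is an $\mathsf{LSG}$ $R$-module with $\kappa_+(N) = 0$, Proposition~\ref{preliminartheorem} applies directly to $N$ and yields that $N_{\mathscr{L}} = \bigoplus_k (N_{\mathscr{L}})_k$ is an $\mathsf{LSG}$ $R$-module. Therefore $Q_{\kappa_+}(M)$, being equal to $N_{\mathscr{L}}$, is an $\mathsf{LSG}$ $R$-module, and we read off its semi-graduation as $Q_{\kappa_+}(M) = \bigoplus_k (N_{\mathscr{L}})_k$ where $N = M/\kappa_+(M)$.

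The one point that genuinely needs verification — and which I expect to be the main (though modest) obstacle — is that $\kappa_+(M)$ is really an $\mathsf{LSG}$ submodule of $M$, i.e. that $R'_n \cdot \kappa_+(M)_m \subseteq \kappa_+(M)_{n+m}$, so that the quotient is $\mathsf{LSG}$ and not merely $\mathsf{SG}$. This follows because for $S$ a good left Ore set and $M$ an $\mathsf{LSG}$ module, $\kappa_{S}(M) = \ker(M \to S^{-1}M)$, and $S^{-1}M$ is $\mathsf{LSG}$ by Proposition~\ref{localization-module}; the localization map is a homomorphism of $\mathsf{SG}$ modules which respects the $R'$-action degree-wise, so its kernel is an $\mathsf{LSG}$ submodule. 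Intersecting the finitely many $\mathsf{LSG}$ submodules $\kappa_{S_i}(M)$ preserves the $\mathsf{LSG}$ property, and schematicity identifies this intersection with $\kappa_+(M)$. Once this is in place the theorem is immediate from Proposition~\ref{preliminartheorem}; no further computation with the maps $f_t$ is needed, since all that work was already absorbed into the torsion-free case.
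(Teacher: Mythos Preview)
Your proposal is correct and follows essentially the same route as the paper: reduce to the torsion-free case $N = M/\kappa_+(M)$ and invoke Proposition~\ref{preliminartheorem}. The paper's proof is a terse two-line version of exactly this argument; your extra care in checking that $\kappa_+(M)$ is an $\mathsf{LSG}$ (not merely $\mathsf{SG}$) submodule is welcome, though note it also follows immediately from the general fact that any $\mathsf{SG}$ submodule or quotient of an $\mathsf{LSG}$ module is automatically $\mathsf{LSG}$.
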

\begin{proof}
   From Proposition \ref{preliminartheorem} it follows that $\kappa_+\left(M/\kappa_+(M)\right) = 0$ and $Q_{\kappa_+}(M) = Q_{\kappa_+}(M/\kappa_+(M))$.
\end{proof}

\section{Serre-Artin-Zhang-Zerevkin theorem}\label{section-serre-theorem}

In this section, we prove the Serre-Artin-Zhang-Zerevkin theorem for semi-graded rings (Theorem \ref{Serre-theo}) using a different approach than the one presented by Lezama \cite{Lezama2021, LezamaLatorre2017}.

\medskip

Briefly, this theorem was partially formulated by Lezama and Latorre \cite[Theorem 6.12]{LezamaLatorre2017} where it was assumed that the semi-graded left Noetherian ring is a domain. Nevertheless, as is well-known, the Serre-Artin-Zhang-Verevkin theorem for finitely graded algebras does not include this restriction, so that this assumption was eliminated by Lezama \cite[Theorem 1.24]{Lezama2021} (see also \cite[Section 18.4, Theorem 18.5.13]{libropbw}). More exactly, he proved the theorem for an $\mathsf{SG}$ ring $R = \bigoplus_{n\ge 0} R_n$ satisfying the following conditions:

\begin{enumerate}
    \item [\rm (C1)] $R$ is left Noetherian;
    \item [\rm (C2)] $R_0$ is left Noetherian;
    \item [\rm (C3)] for every $n$, $R_n$ is a finitely generated left $R_0$-module;
    \item [\rm (C4)] $R_0 \subset Z(R)$.
\end{enumerate}
Notice that condition (C4) implies that $R_0$ is a commutative Noetherian ring.

\medskip

Universal enveloping algebras of finite-dimensional Lie algebras, some quantum algebras with three generators, and some examples of 3-dimensional skew polynomial algebras \cite{BellSmith1990, Redman1999, ReyesSarmiento2022} illustrate the Serre-Artin-Zhang-Verevkin theorem \cite[Example 1.26]{Lezama2021} and \cite[Example 18.5.15]{libropbw}.

\medskip

We start with the following preliminary result.

\begin{lemma}\label{preliminaryresult}
Let $R$ be a positively $\mathsf{SG}$ left Noetherian ring and $S$ a non-trivial left Ore set of $R$. Then $\mathscr{L}(\kappa_+)\subseteq \mathscr{L}(S)$.
\end{lemma}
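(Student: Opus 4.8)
The plan is to unwind the definitions of the two filters. Recall $\mathscr{L}(\kappa_+)=\{I\vartriangleleft_l R\mid (R_{\ge t})^m\subseteq I\text{ for some }t,m\in\mathbb{N}\}$, while $\mathscr{L}(S)=\{I\vartriangleleft_l R\mid I\cap S\neq\emptyset\}$. So let $I\in\mathscr{L}(\kappa_+)$ and fix $t,m\in\mathbb{N}$ with $(R_{\ge t})^m\subseteq I$; we must produce an element of $I\cap S$. The key observation is that $S$ is non-trivial, i.e., $S\cap R_+\neq\emptyset$, so there is $s\in S$ with $\deg(s)=\alpha\ge 1$. Then $s^t\in R_{\ge t}$ — here one uses that multiplication of homogeneous elements of degrees $\ge t$ lands in $\bigoplus_{k\le \cdot}R_k$ together with the definition of $R_{\ge t}$ as the smallest two-sided SG-ideal containing $\bigoplus_{k\ge t}R_k$, so in particular $s^{t}\in R_{\ge\alpha t}\subseteq R_{\ge t}$ when $\alpha\ge 1$ — and consequently $s^{tm}\in(R_{\ge t})^m\subseteq I$. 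Since $s\in S$ and $S$ is multiplicatively closed, $s^{tm}\in S$, hence $s^{tm}\in I\cap S$ and $I\in\mathscr{L}(S)$.

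First I would state carefully the containment $s^{t}\in R_{\ge t}$. The subtlety compared with the $\mathbb{N}$-graded case is that $R_{\ge t}$ is not simply $\bigoplus_{k\ge t}R_k$ (which need not be a two-sided ideal here), but its two-sided SG-ideal closure; nevertheless $\bigoplus_{k\ge t}R_k\subseteq R_{\ge t}$, and more generally $R_{\ge u}\subseteq R_{\ge t}$ whenever $u\ge t$. Since $s$ is homogeneous of degree $\alpha\ge1$, property (ii) of the semi-graduation gives $s^{t}\in\bigoplus_{k\le \alpha t}R_k$; but I actually need the \emph{lower} bound $s^t\in R_{\ge t}$, which is not automatic from the SG-axioms alone. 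The clean way around this is exactly the argument already used in the Remark preceding Lemma~\ref{lemmasameproperty} and in the proof of the characterization proposition: for a non-trivial $S$ one picks $s'\in S\cap R_+$ and forms suitable powers so that $s^{tm}\in (R_{\ge t})^m$; in those places the authors silently use that a positive-degree element of $R$, raised to a large enough power, lies in $R_{\ge t}$. I would therefore either cite that argument verbatim or note that $s^{tm}$, being a product of $tm$ homogeneous elements each of positive degree, lies in $R_{\ge tm}\subseteq R_{\ge t}$ by the very construction of the ideals $R_{\ge\cdot}$ (the smallest two-sided SG-ideals with the stated generators are nested, and a product of generators of $\bigoplus_{k\ge 1}R_k$ taken $tm$ times lands in the $tm$-th power $(R_{\ge1})^{tm}\subseteq(R_{\ge1})^m\subseteq\dotsb$).

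Putting it together: given $I\in\mathscr{L}(\kappa_+)$, choose $t,m$ with $(R_{\ge t})^m\subseteq I$, choose $s\in S\cap R_+$ of degree $\alpha\ge 1$, observe $s^{tm}\in (R_{\ge t})^m\subseteq I$ and $s^{tm}\in S$, hence $I\cap S\neq\emptyset$, i.e.\ $I\in\mathscr{L}(S)$. Since $I$ was arbitrary, $\mathscr{L}(\kappa_+)\subseteq\mathscr{L}(S)$.

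The main obstacle, and the only nontrivial point, is justifying $s^{tm}\in (R_{\ge t})^m$ — that is, controlling the position of a power of a positive-degree homogeneous element inside the SG-ideal filtration $(R_{\ge t})$, which behaves less transparently than in the $\mathbb{N}$-graded setting. I expect this to be dispatched by the same bookkeeping already present in the Remark before Lemma~\ref{lemmasameproperty}, so in the write-up I would simply invoke that passage rather than redo it. Everything else is a direct unwinding of the two filter definitions, and no Noetherian hypothesis is actually needed for this inclusion (it is only the reverse inclusion, part of the schematic condition, that is delicate).
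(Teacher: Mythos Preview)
Your proposal is correct and follows essentially the same approach as the paper: pick $s\in S$ of positive degree, observe $s^{tn}\in (R_{\ge t})^n\subseteq I$ and $s^{tn}\in S$, hence $I\cap S\neq\emptyset$. The paper asserts the containment $s^{tn}\in (R_{\ge t})^n$ without further comment, so your extended discussion of that point is more careful than the original, but the argument is otherwise identical.
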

\begin{proof}
Let $I\in\mathscr{L}(\kappa_+)$. There exist elements $t,n\in \mathbb{N}$ such that $R_{\ge t}^n\subseteq I$. Since $S$ is non-trivial, there exists $s\in S$ with $\deg(s)\ge 1$, whence $s^{tn}\in R_{\ge t}^n$. This fact shows that $S\cap I\neq\emptyset$.
\end{proof}

Lemma \ref{preliminaryresult} says that if $M$ is an $R$-module and $S$ is a non-trivial left Ore set of $R$, then $\kappa_+(M)\subseteq \kappa_S(M)$.

\begin{lemma}\label{iso.local}
    Let $R$ be a positively $\mathsf{SG}$ left Noetherian ring and $S$ a non-trivial good left Ore set. If $M$ is an $\mathsf{LSG}$ $R$-module, then $S^{-1}(M)\cong S^{-1}(Q_{\kappa_+}(M))$.
\end{lemma}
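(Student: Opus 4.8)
The claim is an isomorphism $S^{-1}M \cong S^{-1}(Q_{\kappa_+}(M))$ for a non-trivial good left Ore set $S$. The plan is to exhibit the natural map $S^{-1}(\varphi_M): S^{-1}M \to S^{-1}(Q_{\kappa_+}(M))$ induced by the canonical $R$-homomorphism $\varphi_M: M \to Q_{\kappa_+}(M)$ and show it is both injective and surjective using the torsion-theoretic relationship between $\kappa_+$ and $\kappa_S$. The key input is Lemma \ref{preliminaryresult}, which gives $\kappa_+(N) \subseteq \kappa_S(N)$ for every $R$-module $N$, equivalently $\mathscr{L}(\kappa_+) \subseteq \mathscr{L}(S)$; this is exactly the statement that localizing at $S$ ``kills more'' than $\kappa_+$ does, so the $\kappa_+$-quotient map becomes an isomorphism after applying $S^{-1}(-)$.

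\textbf{Key steps.} First I would recall the two basic facts about the quotient module: the kernel of $\varphi_M$ is $\kappa_+(M)$, and the cokernel of $\varphi_M$ is $\kappa_+$-torsion (both were established in Section \ref{schematicalgebrasmotivation} for a general idempotent filter, applied here to $\mathscr{L} = \mathscr{L}(\kappa_+)$). So there is a short exact sequence $0 \to \kappa_+(M) \to M \xrightarrow{\varphi_M} Q_{\kappa_+}(M) \to C \to 0$ with $\kappa_+(C) = C$. Breaking this into two short exact sequences $0 \to \kappa_+(M) \to M \to \varphi_M(M) \to 0$ and $0 \to \varphi_M(M) \to Q_{\kappa_+}(M) \to C \to 0$, I would apply the exact functor $S^{-1}(-)$ (Ore localization is exact). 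For the first sequence, $S^{-1}(\kappa_+(M)) = 0$ because $\kappa_+(M)$ is $\kappa_+$-torsion, hence $\kappa_S$-torsion by Lemma \ref{preliminaryresult}, and $\kappa_S$-torsion modules localize to zero at $S$ (this is the content of Example (i) after Definition \ref{quotientmoduleMwrtQ}: $Q_S(N) \cong S^{-1}N$ and $Q_S$ annihilates torsion). Therefore $S^{-1}M \cong S^{-1}(\varphi_M(M))$. For the second sequence, $S^{-1}C = 0$ for the same reason ($C$ is $\kappa_+$-torsion), so $S^{-1}(\varphi_M(M)) \cong S^{-1}(Q_{\kappa_+}(M))$. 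Composing the two isomorphisms gives $S^{-1}M \cong S^{-1}(Q_{\kappa_+}(M))$, and the composite is precisely $S^{-1}(\varphi_M)$, which is an $R$-module (indeed $S^{-1}R$-module) homomorphism.

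\textbf{Compatibility with the semi-graded structure.} Since $M$ is $\mathsf{LSG}$, Proposition \ref{localization-module} ensures $S^{-1}M$ is $\mathsf{LSG}$, and by the theorem preceding this lemma $Q_{\kappa_+}(M)$ is $\mathsf{LSG}$, so $S^{-1}(Q_{\kappa_+}(M))$ is $\mathsf{LSG}$ as well. One should check that $\varphi_M$ and the localization maps are homogeneous of degree zero, so that the isomorphism respects the semi-graduations; this follows because $\varphi_M(m) = [R, \beta(m)]$ is homogeneous of degree $k$ exactly when $m$ is, as noted in Section \ref{schematicness}, and $S^{-1}(-)$ of a homogeneous map is homogeneous by the explicit description of the semi-graduation of a localized module.

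\textbf{Main obstacle.} The only genuinely delicate point is verifying that passing to $S^{-1}(-)$ and to the semi-graded localization are compatible, i.e., that the abstract module isomorphism can be taken homogeneous. The torsion-theoretic vanishing arguments are routine once Lemma \ref{preliminaryresult} is in hand; the care is in tracking degrees through the two short exact sequences and confirming that the representative-level description of $S^{-1}M$ from Proposition \ref{localization-module} matches the one for $S^{-1}(Q_{\kappa_+}(M))$ under $S^{-1}(\varphi_M)$. I expect this to be a short verification rather than a real obstruction.
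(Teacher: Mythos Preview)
Your approach is essentially the same as the paper's: both factor the comparison through $M/\kappa_+(M)\cong\varphi_M(M)$ and use Lemma~\ref{preliminaryresult} to conclude that the $\kappa_+$-torsion kernel and cokernel of $\varphi_M$ vanish after applying $S^{-1}(-)$; the paper does this by explicit element-chasing on the two maps $S^{-1}M\to S^{-1}(M/\kappa_+(M))$ and $S^{-1}(M/\kappa_+(M))\to S^{-1}(Q_{\kappa_+}(M))$, while you package the same computation via exactness of Ore localization on the two short exact sequences. One small over-reach: your claim that $Q_{\kappa_+}(M)$ is $\mathsf{LSG}$ invokes the theorem at the end of Section~\ref{schematicness}, which was proved only under the standing hypothesis that $R$ is schematic, whereas this lemma does not assume schematicness---but the paper's statement only asserts a module isomorphism, so this extra compatibility is not needed here.
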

    \begin{proof}
        Let 
        \begin{align*}
        f\colon S^{-1}M & \longrightarrow S^{-1}(M/\kappa_+(M)) \\
        \frac{m}{s} & \longmapsto \frac{\overline{m}}{s}.
    \end{align*}
    It is clear that $f$ is surjective. Let $\frac{m}{s}\in {\rm Ker}(f)$. Then $\frac{\overline{m}}{s}=0$, and so there exists $s'\in S$ such that $s'\overline{m}=0$, i.e., $s'm\in\kappa_+(M)\subseteq \kappa_S(M)$. Hence, there exists $s''\in S$ with $s''s'm=0$, and since $s''s'\in S$, it follows that $\frac{m}{s}=0$. Therefore, $S^{-1}(M)\cong S^{-1}(M/\kappa_+(M))$.

    Now, let 
    \begin{align*}
    g\colon S^{-1}(M/\kappa_+(M)) & \longrightarrow S^{-1}(Q_{\kappa_+}(M)) \\        \frac{\overline{a}}{s} & \longmapsto \frac{h(\overline{a})}{s},
    \end{align*}   
    where $h$ is the isomorphism between $M/\kappa_+(M)$ and $\varphi_M(M)$. Since $h$ is injective, so $g$ also is. Let $\frac{\xi}{s}\in S^{-1}(Q_+(M))$. Then there exist elements $t,n\in\mathbb{N}$ such that $(R_{\ge t}^n)\xi\subseteq \varphi_M(M)$. Since $S$ is non-trivial, repeating the argument above in the proof of Lemma \ref{preliminaryresult} we can assert that there exists $s'\in S$ such that $s''=(s')^{tn}\in R_{\ge t}^n$. In this way, $s''\xi\in\varphi_M(M)$, and so there exist $m\in M$ such that $s''\xi = \varphi_M(m)$, whence $g\left(\frac{\overline{m}}{s''s}\right) = \frac{h(\overline{m})}{s''s}=\frac{\varphi_M(m)}{s''s} = \frac{s''\xi}{s''s} = \frac{\xi}{s}$. We conclude that $S^{-1}(Q_+(M))\cong S^{-1}(M/\kappa_+(M)) \cong S^{-1}(M)$.
    \end{proof}

For the rest of this section, $R$ denotes a schematic ring (recall that by Definition \ref{def.schematic} $R$ is left Noetherian). Consider the full subcategory $(R,\kappa_+)-\mathsf{LSG}$ of $\mathsf{LSG}-R$ whose objects are the $\kappa_+$-closed modules. If $M$ is an $R$-module $\kappa_+$-closed and $N$ is a submodule of $M$, then $N$ is $\kappa_+$-closed if and only if $M/N$ is $\kappa_+$-torsion-free \cite[Proposition 4.2, Chapter IX]{Stenstrom1975}. Hence, it is clear that the intersection of $\kappa_+$-closed modules is $\kappa_+$-closed. This fact allows us to consider the submodule $\kappa_+$-closed generated by a subset of $M$. If we define
\begin{equation*}
    N^c=\{x\in M\mid (N:x)\in\mathscr{L}(\kappa_+)\},
\end{equation*}
then it is clear that $N^c$ is the submodule $\kappa_+$-closed generated by $N$, and in fact, $N^c=M$ if and only if  $M/N$ is $\kappa_+$-torsion.

\medskip

Notice that in the category $(R,\kappa_+)-\mathsf{LSG}$ the subobjects are the submodules $\mathsf{LSG}$-$\kappa_+$-closed, that are closed under arbitrary intersections. The submodule $\mathsf{LSG}$-$\kappa_+$-closed generated by $X\subseteq M$ will be denoted as $\langle X\rangle^{\mathsf{SG}-\kappa}$. We will say that $M$ is $\mathsf{LSG}$-$\kappa_+$-{\em finitely generated} if there exists a finite set $X\subseteq M$ with $\langle X\rangle^{\mathsf{SG}-\kappa} = M$. Let ${\rm Proj}(R)$ be the $\mathsf{LSG}$-$\kappa_+$ category of finitely generated $R$-modules.

\begin{proposition}
If $N$ is an $\mathsf{SG}$ submodule of $M$, then $N^c$ also is.
\begin{proof}
Let $m = m_1 + \dotsb + m_k\in N^c$ with $m_i\in M_{l_i}$. There exists $I\in\mathscr{L}(\kappa_+)$ such that $I\subseteq (N:m)$. Since $R$ is schematic by the good left Ore set $S_i$, $i=1,\dots,n$, say, then there exist elements $s_i\in S_i$ with $\sum Rs_i\subseteq I$, whence $s_im\in N$ for all $i$.  Since $N$ is $\mathsf{SG}$ and $s_i\in R''$, then $s_im_j\in N$, for each $i,j$. Thus, $\sum_{i = 1}^n Rs_i\subseteq (N:m_j)$, which shows that $m_j\in N^c$.
\end{proof}
\end{proposition}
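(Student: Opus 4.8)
The plan is to reduce the statement to the componentwise criterion for semi-gradedness: a submodule of a semi-graded module is $\mathsf{SG}$ if and only if it contains all homogeneous components of each of its elements (\cite[Proposition 2.6]{LezamaLatorre2017}). So it suffices to take $z\in N^c$, write its homogeneous decomposition $z=m_1+\dotsb+m_k$ with $m_j\in M_{l_j}$ and the $l_j$ pairwise distinct, and show $m_j\in N^c$ for each $j$, that is, $(N:m_j)\in\mathscr{L}(\kappa_+)$.

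First I would unwind $z\in N^c$. By definition $(N:z)\in\mathscr{L}(\kappa_+)$, so there is a left ideal $I\in\mathscr{L}(\kappa_+)$ with $Iz\subseteq N$ (one may even take $I=(R_{\ge t})^n$). Since $R$ is schematic by good non-trivial left Ore sets $S_1,\dotsc,S_n$, we have $\mathscr{L}(\kappa_+)=\bigcap_i\mathscr{L}(S_i)$; in particular $I$ meets every $S_i$, so choose $s_i\in I\cap S_i$. Goodness of $S_i$ gives $S_i\subseteq R''$, hence each $s_i$ is homogeneous, say of degree $k_i$.

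The crux is the degree bookkeeping. For every $i$ we have $s_iz\in N$. Because $s_i\in R''_{k_i}$ and $M$ is $\mathsf{LSG}$, each product $s_im_j$ lies in $M_{k_i+l_j}$; since the $l_j$ are pairwise distinct, so are the $k_i+l_j$, and therefore $s_im_1,\dotsc,s_im_k$ are exactly the homogeneous components of $s_iz$. As $N$ is an $\mathsf{SG}$ submodule, each of these components belongs to $N$, i.e. $s_im_j\in N$ for all $i,j$. Consequently $s_i\in(N:m_j)$ for every $i$, so $\sum_{i=1}^{n}Rs_i\subseteq(N:m_j)$. Now the left ideal $\sum_iRs_i$ contains each $s_i\in S_i$, hence meets every $S_i$, so schematicness again yields $\sum_iRs_i\in\bigcap_i\mathscr{L}(S_i)=\mathscr{L}(\kappa_+)$; by the filter axiom $T_1$ we conclude $(N:m_j)\in\mathscr{L}(\kappa_+)$, that is, $m_j\in N^c$.

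The step I expect to be the main obstacle --- or at least the one demanding care --- is the degree bookkeeping above: one must guarantee that left multiplication by each chosen $s_i$ shifts the degree of every homogeneous $m_j$ by exactly $k_i$, so that the family $\{s_im_j\}_j$ is genuinely the homogeneous decomposition of $s_iz$ with no collisions or hidden cancellations. This is precisely where the hypotheses that the Ore sets are \emph{good} (so that $s_i\in R''$) and that $M$ is $\mathsf{LSG}$ enter essentially; a subtler bookkeeping point is the two-fold appeal to schematicness, once to extract the $s_i$ from $I$ and once to certify $\sum_iRs_i$ as a member of $\mathscr{L}(\kappa_+)$.
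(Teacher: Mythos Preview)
Your argument is correct and follows essentially the same route as the paper's proof: decompose an element of $N^c$ into homogeneous pieces, use schematicness to produce $s_i\in S_i$ lying in the annihilating ideal, exploit $s_i\in R''$ (together with $M$ being $\mathsf{LSG}$) so that the $s_im_j$ are the homogeneous components of $s_im\in N$, and conclude via $N$ being $\mathsf{SG}$ that $\sum_i Rs_i\subseteq(N:m_j)\in\mathscr{L}(\kappa_+)$. Your write-up is in fact more explicit than the paper's on the degree bookkeeping and on the two separate invocations of schematicness, but the underlying strategy is identical.
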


From these facts we have the equality $\langle X\rangle^{ \mathsf{SG}-\kappa} = \left(\langle X\rangle^{\mathsf{SG}}\right)^{c}$ for each $X\subseteq M$. In this way, $M$ is $\mathsf{LSG}$-$\kappa_+$-finitely generated if and only if there exist a finite set $X\subseteq M$ such that $\left(\langle X\rangle^{\mathsf{SG}}\right)^{c}=M$, or equivalently, $M/M_1$ is $\kappa_+$-torsion with $M_1=\langle X\rangle^{\mathsf{SG}}$.

\medskip

Next, we define the notion of {\em noncommutative site}.

\begin{definition}
Let $\mathscr{O}$ be the set of non-trivial good left Ore sets of $R$ and $\mathscr{W}$ the free monoid on $\mathscr{O}$. We define the category $\mathscr{\underline{W}}$ as follows: the objects of $\mathscr{\underline{W}}$ are the elements of $\mathscr{W}$, while for two words $W$ and $W'$ we define the morphisms of $\mathscr{\underline{W}}$, denoted by ${\rm Hom}(W',W)$, as a singleton $\{W'\rightarrow W\}$ if there exists an increasing injection from the letters of $W$ to the letters of $W'$, i.e., $W = S_1,\dotsc S_n$ and $W'=V_0S_1V_1S_2V_2\dots S_nV_n$ for some letters $S_i$ and some (possibly empty) words $V_i$. In other case, ${\rm Hom}(W',W)$ is defined to be empty.
\end{definition}

It is easy to see that $\mathscr{\underline{W}}$ is a thin category. We denote the empty word as $1$, which is the final object of the category.

\medskip

If $W = S_1\dotsc S_n\in\mathscr{W}\ \backslash\ \{1\}$ and $M$ is an $\mathsf{LSG}$ $R$-module, we define 
\[
Q_W(M) = S_n^{-1}R\otimes_R\dots\otimes_R S_1^{-1}R\otimes_R M.
\] 
Lemma \ref{iso.local} asserts that if $W\neq 1$, then $Q_W(M)\cong Q_W(Q_{\kappa_+}(M))$.
 
\medskip

If $W = S_1\dots S_n\in\mathscr{W}\ \backslash\ \{1\}$, we say that $w \in W$ if $w= s_1\dots s_n$ with $s_i\in S_i$. We associate a set of left ideals to $W$, namely 
\[
\mathscr{L}(W) = \{I\vartriangleleft_l R\mid\ {\rm there\ exists}\ w\in W\ {\rm  such\ that}\ w\in I\}.
\]
We define $\mathscr{L}(1) = \mathscr{L}(\kappa_+)$.

\begin{lemma}\label{wordsproof}
    Let $W\in \mathscr{W}\ \backslash\ \{1\}$ and $w, w'\in W$. Then there exists $w''\in W$ such that $w''=aw$ and $w''=bw'$, for some elements $a, b\in R$.
\begin{proof}
We prove the assertion by induction on the length of elements of $W$. If $W = S_1$, then by the Ore's condition there exist elements $a\in R$ and $b\in S_1$ such that $aw= bw'\in S_1$.

Suppose that the assertion holds for every element of length $k$. Let $W = S_1\dotsc S_{k+1},\ \Tilde{W} = S_2\dotsc S_{k+1},\ w = s_1\dots s_{k+1},\ w' = s'_1\dots s'_{k+1}\in W$, $x=s_2\dots s_{k+1}$, and $x'=s'_2\dots s'_{k+1}$. By the inductive step, there exist elements $a, b\in R$ such that $ax = bx'\in\Tilde{W}$. Since $S_1$ is a left Ore set, then there exist $s_1''\in S_1$ and $a_1\in R$ such that $a_1s_1=s_1''a$. Hence, $a_1w = a_1s_1x = s_1''ax = s_1''bx'\in W$. Again, by the Ore's condition, there exist $s_1^*\in S_1$ and $b_1\in R$ such that $b_1s_1'=s_1^*s_1''b$, whence $b_1w'=b_1s_1'x'=s_1^*s_1''bx'=s_1^*a_1w\in W$. 
\end{proof}
\end{lemma}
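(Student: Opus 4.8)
The statement to establish is the following: for a word $W = S_1\dotsc S_n$ in the free monoid $\mathscr{W}$ and two elements $w, w'\in W$ (meaning $w = s_1\dotsc s_n$ and $w' = s'_1\dotsc s'_n$ with $s_i, s'_i\in S_i$), there is a common left multiple $w''\in W$ of $w$ and $w'$, i.e.\ $w'' = aw = bw'$ for suitable $a,b\in R$. My plan is to argue by induction on the length $n$ of $W$, using the left Ore condition one letter at a time, peeling off $S_1$ from the left.

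For the base case $n=1$, the claim is exactly the left Ore condition for the Ore set $S_1$: given $w = s_1$ and $w' = s'_1$ in $S_1$, there exist $a\in R$ and $b\in S_1$ with $aw = bw'$, and this common value lies in $S_1 = W$. For the inductive step, write $W = S_1\widetilde W$ with $\widetilde W = S_2\dotsc S_{n}$, and split $w = s_1 x$, $w' = s'_1 x'$ where $x = s_2\dotsc s_{n}$ and $x' = s'_2\dotsc s'_{n}$ lie in $\widetilde W$. By the inductive hypothesis applied to $\widetilde W$, there are $a,b\in R$ with $ax = bx' \in \widetilde W$; call this common element $y$. Now I must bring the leading letter back into play: the key move is that $S_1$ is a \emph{left} Ore set, so from $s_1$ and the element $a$ I get $s''_1\in S_1$ and $a_1\in R$ with $a_1 s_1 = s''_1 a$. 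Then $a_1 w = a_1 s_1 x = s''_1 a x = s''_1 y$, which is of the form (element of $S_1$)$\cdot$(element of $\widetilde W$), hence lies in $W$. Apply the Ore condition once more: since $S_1$ is left Ore, from $s'_1$ and $s''_1 b$ there exist $s^{*}_1\in S_1$ and $b_1\in R$ with $b_1 s'_1 = s^{*}_1 (s''_1 b)$; then $b_1 w' = b_1 s'_1 x' = s^{*}_1 s''_1 b x' = s^{*}_1 s''_1 a x = s^{*}_1 a_1 w$, which again lies in $W$ since $s^{*}_1 s''_1\in S_1$ and $x'\in\widetilde W$. Setting $w'' := s^{*}_1 a_1 w = b_1 w'$, this is the desired common left multiple lying in $W$, with $a := s^{*}_1 a_1$ and $b := b_1$.

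The only subtlety — and the step I expect to require the most care — is verifying that the elements produced genuinely lie in $W$ in the structured sense $w'' = u_1 u_2\dotsc u_n$ with $u_i\in S_i$, rather than merely in the left ideal they generate; this is why the argument peels off $S_1$ from the \emph{left} and uses the inductive common multiple $y\in\widetilde W$ as a block, so that $s''_1 y$ and $s^{*}_1 s''_1 y$ inherit the factored form. Note also that we never need $a_1, b_1$ or the block $y$ to be homogeneous or to lie in $R'$ or $R''$ — the lemma is a purely multiplicative statement about the free monoid of Ore sets, so the semi-graded structure plays no role here and plain left Ore localization suffices throughout.
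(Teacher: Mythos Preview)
Your proof is correct and follows essentially the same approach as the paper: induction on the length of $W$, peeling off the leftmost letter $S_1$, applying the inductive hypothesis to $\widetilde W = S_2\dotsc S_n$, and then invoking the left Ore condition for $S_1$ twice (first with $s_1$ and $a$, then with $s'_1$ and $s''_1 b$) to rebuild an element of $W$. Your extra remarks about why $w''$ genuinely has the factored form and why no semi-graded structure is needed are apt but do not diverge from the paper's argument.
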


\begin{remark}\label{factor.comun}
    Lemma \ref{wordsproof} can be extended to a finite collection of words, i.e., if $w_1,\dots,w_i\in W$, then there exist $a_1,\dots, a_n\in R$ such that $a_1w_1 = a_2w_2 = \dotsb = a_n w_n\in W$.
\end{remark}

\begin{lemma}\label{wordsproof(2)}
    Let $W\in \mathscr{W}\backslash\ \{1\}$, $w\in W$ and $a\in R$. There exist elements $w'\in W$ and $b\in R$ with $w'a = bw$.
\end{lemma}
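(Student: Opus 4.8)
The plan is to prove this by induction on the length $n$ of the word $W = S_1\dots S_n$, in the same spirit as the proof of Lemma~\ref{wordsproof}, but now producing a \emph{right} common multiple of $w$ and $a$ inside $W$ out of the (left) Ore condition applied to the individual sets $S_i$. For the base case $n=1$ we have $W = S_1$ and $w = s_1\in S_1$, and the left Ore condition for the pair $(s_1,a)$ immediately furnishes $w'\in S_1$ and $b\in R$ with $w'a = bs_1 = bw$.

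For the inductive step I would write $W = S_1\dots S_{n+1}$, split $w = y\,s_{n+1}$ with $y = s_1\dots s_n\in \widehat{W}:=S_1\dots S_n$ and $s_{n+1}\in S_{n+1}$, and proceed in two moves. First, apply the left Ore condition to $(s_{n+1},a)$ to get $s_{n+1}'\in S_{n+1}$ and $a_1\in R$ with $s_{n+1}'a = a_1 s_{n+1}$; this pushes $a$ past the last letter. Second, apply the induction hypothesis to $\widehat{W}$, the element $y\in\widehat{W}$ and the scalar $a_1$, obtaining $y'\in\widehat{W}$ and $b\in R$ with $y'a_1 = by$. Then $w':=y's_{n+1}'$ has its letters coming from $S_1,\dots,S_n,S_{n+1}$ in the correct order, so $w'\in W$, and
\[
w'a = y'(s_{n+1}'a) = y'(a_1 s_{n+1}) = (y'a_1)s_{n+1} = (by)s_{n+1} = bw,
\]
which closes the induction.

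I do not expect a genuine obstacle here: the statement is a letter-by-letter bookkeeping consequence of the defining Ore condition $s'a = a's$, and the only points needing care are keeping the letters of $w'$ in the order $S_1,\dots,S_{n+1}$ (so that $w'$ really is an element of $W$ and not of some rearrangement of the $S_i$'s) and applying the Ore condition on the correct side. An alternative, non-inductive route would set $b_n:=a$ and, running $i$ from $n$ down to $1$, use the left Ore condition for $(s_i,b_i)$ to choose $t_i\in S_i$ and $b_{i-1}\in R$ with $t_i b_i = b_{i-1}s_i$; a telescoping computation then shows that $w':=t_1\cdots t_n\in W$ and $b:=b_0$ satisfy $w'a = bw$.
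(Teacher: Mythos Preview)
Your proof is correct and follows essentially the same inductive strategy as the paper: reduce to a shorter word and handle the remaining letter with the left Ore condition. The only cosmetic difference is that the paper peels off the \emph{first} letter $S_1$ (applying the induction hypothesis to $S_2\cdots S_{k+1}$ and then Ore to $S_1$), whereas you peel off the \emph{last} letter $S_{n+1}$; your non-inductive variant is just the unrolled form of your own induction.
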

\begin{proof}
We prove by induction on the length of words of $W$. If $W = S_1$, then the assertion is precisely the Ore's condition. 

Suppose that the lemma holds for each element of length $k$. Let $W = S_1\dotsc S_{k+1}$, $\Tilde{W} = S_2\dotsc S_{k+1},\ w = s_1\dotsc s_{k+1}$, and $x = s_2\dots s_{k+1}$. By the inductive step, there exist elements $x'\in \Tilde{W}$ and $b\in R$ such that $x'a = bx$. Since $S_1$ is an Ore set, there exist $s_1'\in S_1$ and $b'\in R$ such that $s_1'b = b's_1$, whence $s_1'x'a = s_1'bx = b's_1x = b'w$.
\end{proof}

Lemmas \ref{wordsproof} and \ref{wordsproof(2)} allow us to conclude that $\mathscr{L}(W)$ is a filter. In the case $W\neq1$, we will call $\kappa_W$ the {\em pre-radical} associated to $\mathscr{L}(W)$. It is straightforward to see that for every $\mathsf{LSG}$ $R$-module $M$, the following equality holds
\[
\kappa_W(M) = \{m\in M\mid\ {\rm there\ exists}\ w\in W\ {\rm such\  that}\ wm = 0\} = {\rm Ker}(M\rightarrow Q_W(M)).
\]

Following \cite[p. 113]{VanOystaeyenWillaert1995}, a {\em global cover} is a finite subset $\{W_i \mid i\in I\}$ of $\mathscr{W}$ such that $\bigcap_{i\in I}\mathscr{L}(W_i)=\mathscr{L}(\kappa_+)$.  For $W\in \mathscr{W}$, ${\rm Cov}(W)$ is defined as the set of all sets of the morphisms of $\underline{\mathscr{W}}$ of the form $\{W_iW\rightarrow W \mid i \in I\}$, where $\{W_i\mid i\in I\}$ is a global cover. It is clear that $\{1\}$ is a global cover that will be called trivial. Notice that the schematic condition guarantees the existence of at least one non-trivial global cover. This collection of coverings is not a Grothendieck topology of $\underline{\mathscr{W}}$, but satisfies similar conditions (Proposition \ref{conditions-grothendieck-topology}) that allow us to talk about sheaves on $\underline{\mathscr{W}}$. For this reason, Van Oystaeyen and Willaert called the category $\underline{\mathscr{W}}$ with this coverings the {\em noncommutative site} (c.f. \cite{VanOystaeyenWillaert1996a}).

\medskip

The proof of the following lemma is analogous to the setting of graded rings \cite[Lemma 1]{VanOystaeyenWillaert1995}. We include it for the completeness of the paper. 
\begin{lemma}\label{lemma-global-cover}
    If $\{W_i\mid i\in I\}$ is a global cover, then for all $V\in \mathscr{W}$,  
    \[
    \bigcap_{i\in I}\mathscr{L}(W_iV)=\mathscr{L}(V).
    \]   
\end{lemma}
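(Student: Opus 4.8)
The statement to prove is that for a global cover $\{W_i \mid i \in I\}$ and any word $V \in \mathscr{W}$, one has $\bigcap_{i\in I}\mathscr{L}(W_iV) = \mathscr{L}(V)$. The plan is to handle the two inclusions separately, dealing first with the easy inclusion $\bigcap_i \mathscr{L}(W_iV) \subseteq \mathscr{L}(V)$, and then with the reverse inclusion by reducing, via the concatenation structure of words and the associated filters, to the defining property of a global cover, namely $\bigcap_{i\in I}\mathscr{L}(W_i)=\mathscr{L}(\kappa_+)$. Throughout I would treat the degenerate cases $V=1$ and $W_iV$ involving the empty word using the conventions $\mathscr{L}(1)=\mathscr{L}(\kappa_+)$; the case $V=1$ is then exactly the hypothesis.

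\textbf{The easy inclusion.} Suppose $I \in \bigcap_{i\in I}\mathscr{L}(W_iV)$. For each $i$ there is a word element $w_iv_i \in W_iV$ (with $w_i \in W_i$, $v_i \in V$) lying in $I$. I want to produce a single element of $V$ lying in $I$. Here I would invoke Lemma~\ref{wordsproof} together with Remark~\ref{factor.comun}: applied to the words $v_i \in V$, there exist $a_i \in R$ with $a_1v_1 = a_2v_2 = \dotsb = a_nv_n =: v \in V$. But this needs to interact with $I$ correctly; more carefully, I would argue letter by letter on $V$, or alternatively observe that $\mathscr{L}(W_iV) \subseteq \mathscr{L}(V)$ already holds for each individual $i$ — indeed if $w v \in I$ with $w\in W_i$, $v \in V$, then since $I$ is a left ideal and $wv \in I$... no, that gives $rwv \in I$, not $v \in I$. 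So the single-$i$ inclusion is false in general, and one genuinely must use all the $W_i$ simultaneously. The correct route: since $\{W_i\}$ is a global cover, $\bigcap_i \mathscr{L}(W_i) = \mathscr{L}(\kappa_+)$, so it suffices to show $I \in \mathscr{L}(W_iV)$ for all $i$ implies the ideal $(I : v)$, built from a common $v\in V$, lies in $\bigcap_i \mathscr{L}(W_i) = \mathscr{L}(\kappa_+) = \mathscr{L}(1) \subseteq \mathscr{L}(W_i)$, and then reassemble. Concretely: for the common element $v \in V$ obtained from Remark~\ref{factor.comun} (adjusting the $w_iv_i$ via Lemma~\ref{wordsproof(2)} so they share the $v$-tail), each $w_i \in (I : v)$, so $(I:v) \in \bigcap_i\mathscr{L}(W_i) = \mathscr{L}(\kappa_+)$; applying $T_3$-type closure and the fact that $v \in V$, conclude $I \in \mathscr{L}(V)$ because $(I:v) \in \mathscr{L}(\kappa_+) \subseteq \mathscr{L}(V)$ and $v \in V$ forces, via the filter axiom $T_4$ for $\mathscr{L}(V)$ (which is a filter by Lemmas~\ref{wordsproof} and~\ref{wordsproof(2)}), that $I \in \mathscr{L}(V)$.

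\textbf{The reverse inclusion.} Now let $I \in \mathscr{L}(V)$, so there is $v \in V$ with $v \in I$. Fix $i$. I must find $w'v' \in W_iV$ with $w'v' \in I$. Apply the global cover hypothesis to the principal-type ideal $(I : v)$: actually I want to show $(I:v) \supseteq$ something in $\mathscr{L}(\kappa_+)$; but $v \in I$ gives $1 \in (I:v)$, so $(I:v) = R \in \mathscr{L}(\kappa_+) = \bigcap_j \mathscr{L}(W_j) \subseteq \mathscr{L}(W_i)$. Hence there is $w \in W_i$ with $w \in (I:v)$, i.e. $wv \in I$. But $wv$ need not lie in $W_iV$ as a word element because the product $wv$ might not decompose as a single-letter-per-position word — wait, it does: $w = s_1\dots s_k$ with $s_j \in (S_i)_j$ the letters of $W_i$, and $v = t_1\dots t_m$ with $t_\ell$ letters of $V$, so $wv = s_1\dots s_k t_1 \dots t_m$ is precisely an element of the word $W_iV$. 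Thus $wv \in W_iV \cap I$, giving $I \in \mathscr{L}(W_iV)$. Since $i$ was arbitrary, $I \in \bigcap_i \mathscr{L}(W_iV)$.

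\textbf{Main obstacle.} The delicate point is the easy-looking inclusion $\bigcap_i\mathscr{L}(W_iV)\subseteq\mathscr{L}(V)$: one cannot conclude $v\in I$ directly from $wv\in I$, so the argument must genuinely exploit the global cover condition in the "wrong" direction, passing through $(I:v)$ and the filter axiom $T_4$ for $\mathscr{L}(V)$ — which is where Lemmas~\ref{wordsproof} and~\ref{wordsproof(2)} (filter property of $\mathscr{L}(W)$) do the real work, plus Remark~\ref{factor.comun} to arrange a \emph{common} tail $v \in V$ across all $i \in I$. Synchronizing the finitely many witnesses $w_iv_i$ to share this common $v$ is the one step requiring care; everything else is bookkeeping with the word-concatenation structure.
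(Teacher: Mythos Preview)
Your overall strategy matches the paper's: for the inclusion $\mathscr{L}(V)\subseteq\bigcap_i\mathscr{L}(W_iV)$ you pick $v\in V\cap I$ and any $w\in W_i$ (your detour through $(I:v)=R$ is correct but unnecessary---the paper just observes $wv\in I$ since $I$ is a left ideal); for the hard inclusion you synchronize the witnesses $w_iv_i$ to a common tail $v\in V$ via Remark~\ref{factor.comun} and Lemma~\ref{wordsproof(2)}, obtaining (after the adjustment you allude to) elements $w_i'\in W_i$ with $w_i'v\in I$, whence $(I:v)\in\bigcap_i\mathscr{L}(W_i)=\mathscr{L}(\kappa_+)$. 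Up to this point you and the paper agree.

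The gap is in your final step. You invoke axiom $T_4$ for $\mathscr{L}(V)$, but Lemmas~\ref{wordsproof} and~\ref{wordsproof(2)} only establish that $\mathscr{L}(V)$ is a \emph{filter} (axioms $T_1$--$T_3$); the paper pointedly calls $\kappa_W$ a \emph{pre}-radical, signalling that $\mathscr{L}(W)$ is not known to be idempotent. So you cannot appeal to $T_4$ here. The paper closes the argument differently and more directly: from $(I:v)\in\mathscr{L}(\kappa_+)$ one has $(R_{\ge t})^n\subseteq(I:v)$ for some $t,n$; letting $S$ be the first letter of $V$, non-triviality of $S$ gives an $s\in S\cap(R_{\ge t})^n$ (take a suitable power of an element of $S\cap R_+$), so $sv\in I$, and since $sv=(s\cdot s_1)s_2\cdots s_m$ with $ss_1\in S$, one has $sv\in V$. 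This produces the required witness in $V\cap I$ without any idempotency assumption. Replacing your $T_4$ appeal with this two-line observation fixes the proof.
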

\begin{proof}
    If $I\in\mathscr{L}(V)$, there exists $v\in V$ such that $v\in I$. Let $w_i\in W_i$, we have that $w_iv\in W_iV$ and $w_iv\in I$ thus $I\in\mathscr{L}(W_iV)$. From this, $\mathscr{L}(V)\subseteq \bigcap_{i\in I}\mathscr{L}(W_iV)$.

    Let $I\in \bigcap_{i\in I}\mathscr{L}(W_iV)$, for each $i$ there exist $v_i\in V$ and $w_i\in W_i$ such that $w_iv_i\in I$. by Remark \ref{factor.comun} there exist $a_1,\dots, a_n\in R$ and $v\in V$ such that $v=a_iv_i$, for each $i$. By Lemma \ref{wordsproof(2)}, we obtain that there exist $w_i'\in W_i$ and $b_i\in R$ with $w_i'a_i=b_iw_i$. Since $w_i'\in \sum Rw'_i$ and $\{W_i\mid i\in I\}$ is a global cover, there exist elements $n, t\in\mathbb{N}$ such that $R_{\ge t}^n\subseteq \sum Rw'_i$. Multiplying by $v$ we obtain $\left(R_{\ge t}^n\right)v\subseteq I$. If $S$ is the first letter of $V$, there exists $s\in S\cap R_{\ge t}^n$. Finally, $sv\in I$ and $sv\in V$, and so $I\in \mathscr{L}(V)$.  
\end{proof}

\begin{proposition}\label{conditions-grothendieck-topology}
    The category $\underline{\mathscr{W}}$ together with the sets ${\rm Cov}(W)$ for any element $W\in\underline{\mathscr{W}}$ satisfy the following properties:
    \begin{enumerate}
        \item [$G_1:$]$\{W\rightarrow W\}\in{\rm Cov}(W)$,
        \item [$G_2:$] $\{W_i\rightarrow W\mid i\in I\}\in {\rm Cov}(W)$ and $\forall i\in I:\{W_{ij}\rightarrow W_i\mid j\in I_i\}\in {\rm Cov}(W_i)$ then $\{W_{ij}\rightarrow W_i\rightarrow U\mid i\in I, j\in I_i\}\in {\rm Cov}(W)$,
        \item [$G_3:$] if $\{W_iW\rightarrow W\mid i\in I\}\in {\rm Cov}(W)$, $W'\rightarrow W\in \underline{\mathscr{W}}$, and if we define $W_iW\times_W W'=U_iW'$ then $\{W_iW\times_W W'\rightarrow W'\mid i\in I\}\in{\rm Cov}(W')$.
    \end{enumerate}
\end{proposition}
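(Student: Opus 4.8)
The plan is to verify the three axioms $G_1$, $G_2$, $G_3$ one at a time. For $G_1$ and $G_2$ everything will reduce to the definition of a global cover together with Lemma~\ref{lemma-global-cover}, while $G_3$ will require an explicit description of the fibered product in the thin category $\underline{\mathscr{W}}$. For $G_1$: the trivial global cover $\{1\}$ produces the singleton family $\{1\cdot W\to W\}=\{W\to W\}$, so $\{W\to W\}\in{\rm Cov}(W)$ directly from the definition of ${\rm Cov}(W)$.

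For $G_2$, I would first put the data in normal form. Since $\{W_i\to W\mid i\in I\}\in{\rm Cov}(W)$ there is a global cover $\{V_i\mid i\in I\}$ with $W_i=V_iW$, and since $\{W_{ij}\to W_i\mid j\in I_i\}\in{\rm Cov}(W_i)$ there is, for each $i$, a global cover $\{U_{ij}\mid j\in I_i\}$ with $W_{ij}=U_{ij}W_i=U_{ij}V_iW$. Thus the composite family is $\{(U_{ij}V_i)W\to W\mid i\in I,\ j\in I_i\}$, and it lies in ${\rm Cov}(W)$ exactly when $\{U_{ij}V_i\mid i\in I,\ j\in I_i\}$ is a global cover; note this index set is finite, being a finite union of finite sets. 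To see it is a global cover, apply Lemma~\ref{lemma-global-cover} to the global cover $\{U_{ij}\mid j\in I_i\}$ and the word $V=V_i$ to get $\bigcap_{j\in I_i}\mathscr{L}(U_{ij}V_i)=\mathscr{L}(V_i)$, and then intersect over $i$, using that $\{V_i\mid i\in I\}$ is a global cover (equivalently, Lemma~\ref{lemma-global-cover} with $V=1$), to obtain $\bigcap_{i\in I}\bigcap_{j\in I_i}\mathscr{L}(U_{ij}V_i)=\bigcap_{i\in I}\mathscr{L}(V_i)=\mathscr{L}(\kappa_+)$.

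For $G_3$, suppose $\{W_iW\to W\mid i\in I\}\in{\rm Cov}(W)$ with $\{W_i\mid i\in I\}$ a global cover, and let $W'\to W$ be a morphism of $\underline{\mathscr{W}}$. Writing $W=S_1\dots S_n$, the existence of this morphism means $W'=V_0S_1V_1\dots S_nV_n$ for some (possibly empty) words $V_j$. I would then identify the fibered product as $W_iW\times_WW'=W_iW'$, i.e. the word $U_i$ in the statement is $W_i$ itself: the concatenation $W_iW'=W_iV_0S_1V_1\dots S_nV_n$ carries the evident morphisms $W_iW'\to W_iW$ (induced by $W'\to W$) and $W_iW'\to W'$, and one checks the universal property by observing that any object $Q$ equipped with morphisms to $W_iW$ and to $W'$ contains $W_iW'$ as an increasing subword, which yields the required map $Q\to W_iW'$. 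Consequently $\{W_iW\times_WW'\to W'\mid i\in I\}=\{W_iW'\to W'\mid i\in I\}$, and since $\{W_i\mid i\in I\}$ is a global cover, this family has precisely the shape demanded of an element of ${\rm Cov}(W')$.

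I expect the main obstacle to be $G_3$, and more precisely the universal property used to pin down the fibered product: because $\underline{\mathscr{W}}$ is only thin, the two copies of $W$ occurring inside a test word $Q$ — one via $Q\to W_iW\to W$, the other via $Q\to W'\to W$ — automatically agree as morphisms $Q\to W$, yet reconciling them into a single increasing injection of the letters of $W_iW'$ into those of $Q$ calls for the combinatorial bookkeeping of increasing injections of letters (and, strictly, the fibered product is only determined up to the identification encoded in the phrase \textquotedblleft if we define $W_iW\times_WW'=U_iW'$\textquotedblright). By contrast, once Lemma~\ref{lemma-global-cover} is available, $G_1$ and $G_2$ are formal manipulations of the filters $\mathscr{L}(-)$.
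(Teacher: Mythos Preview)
Your argument is correct and follows the paper's line: $G_1$ via the trivial cover $\{1\}$, $G_2$ via Lemma~\ref{lemma-global-cover}, and $G_3$ directly from the definitions. The only difference is that you do more than necessary for $G_3$: the statement literally says \textquotedblleft if we \emph{define} $W_iW\times_W W'=U_iW'$\textquotedblright\ (with $U_i=W_i$, a typo you rightly spotted), so no universal property needs to be checked---the paper treats this as a notational convention rather than a claim about categorical pullbacks, which is why it can dismiss $G_3$ as clear.
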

\begin{proof}
    $G_1$ holds since $\{1\}$ is a global cover,  $G_2$ is a direct consequence of Lemma \ref{lemma-global-cover} and $G_3$ is clear.
\end{proof}

Definitions \ref{def:presheaf} and \ref{def:sheaf} introduces the notion of presheaf and sheaf, respectively, in the setting of the category $\underline{\mathscr{W}}$. 

\begin{definition}\label{def:presheaf}
A {\em presheaf} $\mathscr{F}$ on $\underline{\mathscr{W}}$ is a contravariant functor from $\underline{\mathscr{W}}$ to the category $\mathsf{LSG}-R$ such that for all $W\in \underline{\mathscr{W}}\backslash\{1\}$, the sections $\mathscr{F}(W)$ of $\mathscr{F}$ on $W$ is an $\mathsf{SG}$ $S^{-1}R$-module, where $S$ denotes the last letter of $W$ and $\mathscr{F}(1)$ is an $\mathsf{SG}$ $Q_{\kappa_+}(R)$-module.
\end{definition}

Since $\mathscr{F}(1)$ denotes the global sections, we will denote it as $\Gamma_*(\mathscr{F})$. We abbreviate $\mathscr{F}(V\rightarrow W)$ as $\rho_V^W:\mathscr{F}(W)\rightarrow\mathscr{F}(V)$; if $W = 1$, then we will write $\rho_V$ instead of $\rho_V^1$. 

\begin{definition}\label{def:sheaf}
    A presheaf $\mathscr{F}$ on $\underline{\mathscr{W}}$ is a {\em sheaf} if it satisfies the following two properties:
    \begin{enumerate}
        \item [\rm (i)] {\em Separatedness}: for all element $W\in\mathscr{W}$ and each global cover $\{W_i\mid i\in I\}$, if $m\in\mathscr{F}(W)$ satisfies that for every $i\in I$, $\rho_{W_iW}^W(m)=0$ in $\mathscr{F}(W_iW)$, then $m = 0$.
        \item [\rm (ii)] {\em Gluing}: for all $W\in\mathscr{W}$ and each global cover $\{W_i\mid i\in I\}$, given $(m_i)\in\prod_i\mathscr{F}(W_iW)$ satisfying 
        \[\rho_{W_iW_jW}^{W_iW}(m_i)=\rho_{W_iW_jW}^{W_jW}(m_j),\quad {\rm for\ all}\ (i,j)
        \in I\times I,
        \]
        there exists an element $m\in\mathscr{F}(W)$ such that
        \[\rho_{W_iW}^W(m)=m_i,\ {\rm for\ all}\ i\in I. 
        \]
    \end{enumerate}
\end{definition}  

\begin{remark}\label{sheaf.iff.limit}
A presheaf $\mathscr{F}$ is a sheaf if and only if for every word $W$ and each global cover $\{W_i\mid i\in I\}$, $\mathscr{F}(W)$ (with the arrows given by $\mathscr{F}$) is the limit of the diagram
\begin{equation}\label{limitdiagram}
\xymatrix@R=8pt{
    \mathscr{F}(W_iW) \ar[dr] \ar[r] & \mathscr{F}(W_iW_jW)\\
    \mathscr{F}(W_jW) \ar[ur] \ar[r] & \mathscr{F}(W_jW_iW)}
\end{equation}
\end{remark}
\begin{proof}
Suppose that $\mathscr{F}$ is a sheaf. Let $M$ be an $\mathsf{SG}$ $R$-module with morphisms $f_i:M\rightarrow \mathscr{F}(W_iW)$ which are compatibles with the morphisms $\rho_{W_iW_jW}^{W_iW}$ and $\rho_{W_JW_iW}^{W_iW}$. Consider an element $m\in M$. By using this compatibility, we have that the element $(f_i(m))$ of $\prod_i\mathscr{F}(W_iW)$ satisfies the equality
\[
\rho_{W_iW_jW}^{W_iW}f_i(m)=\rho_{W_iW_jW}^{W_jW}f_j(m), \;\;\; {\rm for\ all}\ (i,j)
        \in I\times I.
\] 
In this way, there is a unique element $m'\in \mathscr{F}(W)$ such that $\rho_{W_iW}^W(m') = f_i(m)$, for each $i\in I$. If we define the map $\beta:M\rightarrow\mathscr{F}(W)$ as $\beta(m) = m'$, then it is clear that $\beta$ is a homogeneous $R$-homomorphism and it is the only one that satisfies the equality $\rho_{W_iW}^W\circ\beta = f_i$, for all $i\in I$. Hence, $\mathscr{F}(W)$ is the limit of the diagram (\ref{limitdiagram}).

On the other hand, suppose that $\mathscr{F}(W)$ is the limit of the diagram (\ref{limitdiagram}). Let $m\in \mathscr{F}(W)$ such that $\rho_{W_iW}^W(m) = 0$, for each $i\in I$. This means that $m\in\bigcap{\rm Ker}(\rho_{W_iW}^W)$, and by assumption on $\mathscr{F}(W)$, $\bigcap{\rm Ker}(\rho_{W_iW}^W) = 0$, whence $m=0$. Let 
\[
A := \{(m_i)\in\prod\mathscr{F}(W_iW)\mid \rho_{W_iW_jW}^{W_iW}(m_i)=\rho_{W_iW_jW}^{W_jW}(m_j),\ {\rm for\ all}\ (i,j)\in I\times I\}.
\]
It is clear that $A$ is an $\mathsf{SG}$ $R$-submodule of $\prod\mathscr{F}(W_iW)$, which guarantees the existence of only one homogeneous $R-$homomorphism $\beta: A\rightarrow\mathscr{F}(W)$ such that $\rho_{W_iW}^W\circ\beta = \pi_i$, for all $i\in I$, where $\pi_i$ denotes the usual projection. In this way, it is immediate that $\beta(m_i)$ is the element that satisfies the gluing condition.
\end{proof}

\begin{definition}
Let $M$ be an $\mathsf{LSG}$ $R$-module. We define the presheaf $\widehat{M}$ in the following way: for objects, $\widehat{M}(1) = Q_{\kappa_+}(M)$, and for $W\in\mathscr{W}\ \backslash\ \{1\}$, $\widehat{M}(W) = Q_W(M)$. Now, for morphisms, if $W\neq 1$ then to the map $V\rightarrow W$ we assign it the trivial morphism such that the following diagram commutes
\begin{equation}
    \xymatrix{
    Q_W(M) \ar[r]^{\rho_V^W} & Q_V(M)\\
    M \ar[ur] \ar[u] & }
\end{equation}
while for the morphism $W\rightarrow 1$ we assign the composition map
\[
Q_{\kappa_+}(M)\rightarrow Q_W(Q_{\kappa_+}(M))\rightarrow Q_W(M),
\]
where the first arrow is the natural map, and the second arrow is precisely the isomorphism obtained in Lemma \ref{iso.local}.
\end{definition}

If $W = S_1\dotsc S_n$ and $w\in W$, say $w = s_1\dots s_n$, then the element $\frac{1}{s_n}\otimes\dots\otimes \frac{1}{s_1}\otimes m\in Q_W(M)$ will be noted as $\frac{m}{w}$. In particular, $\frac{m}{1}$ stands for $1\otimes m$ in $Q_S(M)$, for $1\otimes 1\otimes m$ in $Q_{ST}(M)$, and so on, which element is meant depends on the module it belongs to.

\medskip

The proof of the following two lemmas follow the same ideas to those presented in the setting of $\mathbb{N}$-graded rings \cite[Lemmas 2 and 3]{VanOystaeyenWillaert1995}.  

\begin{lemma}\label{lemma.van3}
    Given elements $\frac{m}{w}\in Q_W(M)$ and $a\in R$, there exist $w'\in W,\ b\in R$ such that $w'a = bw$ and $a\frac{m}{w} = \frac{bm}{w'}\in Q_W(M)$.
\end{lemma}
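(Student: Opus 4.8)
The statement to prove is Lemma~\ref{lemma.van3}: given $\frac{m}{w}\in Q_W(M)$ and $a\in R$, there exist $w'\in W$ and $b\in R$ with $w'a=bw$ and $a\frac{m}{w}=\frac{bm}{w'}$ in $Q_W(M)$. The plan is to first invoke Lemma~\ref{wordsproof(2)}, which gives exactly the existence of $w'\in W$ and $b\in R$ with $w'a=bw$; so the only real content is to verify the identity $a\frac{m}{w}=\frac{bm}{w'}$ in $Q_W(M)=S_n^{-1}R\otimes_R\dots\otimes_R S_1^{-1}R\otimes_R M$, where $W=S_1\dotsc S_n$ and $w=s_1\dots s_n$ with $s_i\in S_i$.

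The argument I would give is an induction on the length $n$ of the word $W$, mirroring the inductive structure of Lemmas~\ref{wordsproof} and \ref{wordsproof(2)}. For $n=1$ this is just the standard computation in an Ore localization: if $s_1'a=bs_1$ in $R$ with $s_1'\in S_1$, then in $S_1^{-1}R\otimes_R M$ one has $a\cdot\frac{1}{s_1}\otimes m=\frac{1}{s_1's_1}\otimes s_1'am=\frac{1}{s_1's_1}\otimes bs_1 m$, and after cancelling $s_1$ one gets $\frac{1}{s_1'}\otimes bm=\frac{bm}{w'}$; this is where I would be careful to track how the element $a$ moves across the tensor sign, using the defining relations of the Ore localization (an element of $S_1^{-1}R$ times $a$ can be rewritten by the Ore condition, and the tensor over $R$ lets us push $b$ onto $m$). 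For the inductive step, I write $W=S_1\,\widetilde W$ with $\widetilde W=S_2\dots S_n$ and $w=s_1 x$ with $x=s_2\dots s_n\in\widetilde W$, so that $Q_W(M)=S_1^{-1}R\otimes_R Q_{\widetilde W}(M)$ (associativity of the iterated tensor/localization, which I would note holds since each $S_i^{-1}R$ is a flat left module over the previous ring and the tensor product is associative). Applying the Ore condition for $S_1$ to $a$ and $s_1$ gives $s_1''a=a_1 s_1$ with $s_1''\in S_1$, $a_1\in R$; then inside $Q_{\widetilde W}(M)$ one applies the inductive hypothesis to $a_1$ and $\frac{m}{x}$, obtaining $x'\in\widetilde W$, $b'\in R$ with $x'a_1=b'x$ and $a_1\frac{m}{x}=\frac{b'm}{x'}$. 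Combining, $a\frac{m}{w}=a\cdot\frac{1}{s_1}\otimes\frac{m}{x}=\frac{1}{s_1''s_1}\otimes a_1 s_1\frac{m}{x}$; after cancelling $s_1$ and substituting the inductive identity one lands on $\frac{1}{s_1''}\otimes\frac{b'm}{x'}=\frac{b'm}{s_1''x'}$, and setting $w'=s_1''x'\in W$, $b=b'$ (after checking $w'a=bw$, which follows by composing the two Ore relations exactly as in the proof of Lemma~\ref{wordsproof(2)}) finishes the step.

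The main obstacle is bookkeeping rather than conceptual: one must keep careful track of \emph{which} ring acts on which tensor factor and make sure every manipulation ``$a\otimes\text{-}$'' or ``cancel $s_i$'' is legitimate in the iterated Ore localization — i.e.\ that the relations used are precisely the defining relations of $S_i^{-1}R\otimes_R(-)$ and that associativity of the iterated construction has been justified (flatness of each $S_i^{-1}R$ as a right $R$-module, which is standard for Ore localizations). A secondary point is to confirm that the $w'$ and $b$ produced by the induction coincide with (or can be chosen to coincide with) those furnished by Lemma~\ref{wordsproof(2)}, so that the two halves of the statement are compatible; in practice one simply runs the inductive construction and observes that the relation $w'a=bw$ it produces is of the required form, so no separate appeal to Lemma~\ref{wordsproof(2)} is strictly needed. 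Since the authors explicitly say the proof ``follow[s] the same ideas'' as \cite[Lemma~2]{VanOystaeyenWillaert1995}, I expect the written proof to be short, essentially the $n=1$ case plus a remark that the general case follows by iterating over the letters of $W$.
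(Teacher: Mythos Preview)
Your approach---iterate the Ore condition over the letters of $W$---is the same as the paper's, which simply unrolls the recursion: set $a_n=a$ and for $i=n,n-1,\dots,1$ choose $s_i'\in S_i$, $a_{i-1}\in R$ with $s_i'a_i=a_{i-1}s_i$; then $a\,\frac{m}{w}=\frac{1}{s_n'}\otimes\dots\otimes\frac{1}{s_1'}\otimes a_0m$, and one takes $b=a_0$, $w'=s_1'\dotsm s_n'$.

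Your inductive step, however, has the tensor order reversed. By the paper's convention $Q_W(M)=S_n^{-1}R\otimes_R\dots\otimes_R S_1^{-1}R\otimes_R M$, the \emph{last} letter $S_n$ is outermost, so $a\in R$ acts first on the $S_n^{-1}R$ factor. Your decomposition $W=S_1\widetilde W$ gives $Q_W(M)=Q_{\widetilde W}(S_1^{-1}M)$, not $S_1^{-1}R\otimes_R Q_{\widetilde W}(M)$ as you wrote; the induction should peel off $S_n$ on the right instead. With that fix---apply Ore for $S_n$ to move $a$ past $\frac{1}{s_n}$, then invoke the hypothesis inside $Q_{\widetilde W}(M)$ with $\widetilde W=S_1\dots S_{n-1}$---the argument is correct and coincides exactly with the paper's recursion. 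Your $n=1$ computation also slips: the intermediate expression $\frac{1}{s_1's_1}\otimes s_1'am$ equals $s_1^{-1}a\otimes m$, not $as_1^{-1}\otimes m$, and the subsequent ``cancellation of $s_1$'' is not valid in a noncommutative localization; the clean one-line step is $a\cdot\frac{1}{s_1}=\frac{b}{s_1'}$ directly from $s_1'a=bs_1$. You were right to flag bookkeeping as the only real obstacle---these are exactly the places where it bites.
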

\begin{proof}
Let $W = S_1\dotsc S_n$ and $w = s_1\dotsc s_n$. We consider $a_n = a$ and define $a_i$ recursively. More exactly, for an element $a_i$, the Ore's condition guarantees the existence of elements $s_i'\in S_i$ and $a_{i-1}\in R$ such that $s_i'a_i = a_{i-1}s_i$. Hence, $a\frac{m}{w} = \frac{1}{s'_n}\otimes\dotsc \otimes\frac{1}{s_1'}\otimes a_0m$. If we define $b = a_0$ and $w'=s_1'\dotsc s_n'$, then the assertion follows.
\end{proof}

\begin{lemma}\label{lemma.n/1}
    If $\frac{m}{w} = \frac{n}{1}$ in $Q_W(M)$ for some element $n\in M$, then there exist $\tilde{w}\in W$ and $r\in R$ such that $\tilde{w} = rw$ and $\tilde{w}n = rm$.
\end{lemma}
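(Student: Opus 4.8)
The plan is to prove the statement by induction on the length $n$ of the word $W=S_1\dotsc S_n$, exploiting the recursive description $Q_W(M)=S_n^{-1}R\otimes_R Q_{W'}(M)\cong S_n^{-1}\bigl(Q_{W'}(M)\bigr)$, where $W'=S_1\dotsc S_{n-1}$, together with the standard criterion for two fractions to coincide in a left Ore localization and the previously established Lemma \ref{lemma.van3}.

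First I would settle the base case $n=1$, where $Q_W(M)=S_1^{-1}M$ and $w\in S_1$. The equality $\frac{m}{w}=\frac{n}{1}$ amounts to $\frac{m-wn}{w}=0$ in $S_1^{-1}M$, so there is $u\in S_1$ with $um=uwn$; taking $\tilde w:=uw\in S_1=W$ and $r:=u$ yields $\tilde w=rw$ and $\tilde w n=rm$.

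For the inductive step I would write $w=s_1\dotsc s_n$ with $s_i\in S_i$, set $w':=s_1\dotsc s_{n-1}\in W'$, and note $w=w's_n$. Viewing the hypothesis inside $S_n^{-1}\bigl(Q_{W'}(M)\bigr)$, it reads $\frac{\xi}{s_n}=\frac{\eta}{1}$ with $\xi:=\frac{m}{w'}$ and $\eta:=\frac{n}{1}$ in $Q_{W'}(M)$, so there is $u\in S_n$ with $u\xi=us_n\eta$; putting $a:=u$ and $b:=us_n\in S_n$ gives $b=as_n$ and $a\frac{m}{w'}=\frac{bn}{1}$ in $Q_{W'}(M)$. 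Applying Lemma \ref{lemma.van3} to $a$ and $\frac{m}{w'}$ produces $w''\in W'$ and $c\in R$ with $w''a=cw'$ and $a\frac{m}{w'}=\frac{cm}{w''}$; hence $\frac{cm}{w''}=\frac{bn}{1}$ in $Q_{W'}(M)$, which is an instance of the lemma for the shorter word $W'$. The induction hypothesis then furnishes $\hat w\in W'$ and $d\in R$ with $\hat w=dw''$ and $\hat w(bn)=d(cm)$. I would conclude by setting $\tilde w:=\hat wb$ and $r:=dc$: since $\hat w\in W'$ and $b\in S_n$ one has $\tilde w\in W$, while a short computation using $b=as_n$, $\hat w=dw''$, $w''a=cw'$ and $w=w's_n$ gives $\tilde w=\hat w a\,s_n=d(w''a)s_n=dc\,w's_n=dc\,w=rw$ and $\tilde w n=\hat w(bn)=d(cm)=rm$.

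I would not spell out the routine manipulations (the existence of the witnessing Ore elements, the internal computation in Lemma \ref{lemma.van3}). The step that needs care is the bookkeeping in the inductive passage: identifying $Q_W(M)$ with the single localization $S_n^{-1}\bigl(Q_{W'}(M)\bigr)$, invoking the right ``equality of fractions'' there, and then chaining $b=as_n$, $w''a=cw'$ and $\hat w=dw''$ so that $\hat wb$ telescopes to $dc\,w$. The subtle point is that the two conclusions $\tilde w=rw$ and $\tilde w n=rm$ must be produced with the \emph{same} scalar $r$, and it is precisely Lemma \ref{lemma.van3}---moving $a$ past $\frac{m}{w'}$ while recording $w''a=cw'$---that makes the numerator identity fall into line.
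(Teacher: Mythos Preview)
Your proof is correct and follows essentially the same approach as the paper's: induction on the length of $W$, reducing the inductive step to the equality $\frac{cm}{w''}=\frac{bn}{1}$ in $Q_{W'}(M)$ via Lemma \ref{lemma.van3}, then taking $\tilde w=\hat w b$ and $r=dc$. Your version simply spells out the base case and the final telescoping computation $\hat w b=\hat w a s_n=d w''a s_n=dc\,w' s_n=dc\,w$ more explicitly than the paper does.
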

\begin{proof}
Induction on the length $n$ of the word $w = s_1\dotsc s_n$. The case $n=1$ it is clear. Suppose that the assertion holds for any word of length less than $n$. Let $W' = S_1\dotsc S_{n-1}$ and $w' = s_1\dotsc s_{n-1}$. Then $\frac{1}{s_n}\otimes\frac{m}{w'} = 1\otimes \frac{n}{1}\in S_n^{-1}(Q_{W'})$, so that there exist elements $a, b\in R$ such that $a\frac{m}{w'} = b\frac{n}{1}=\frac{bn}{1}\in Q_{W'}(M)$ and $as_n=b\in S_n$. By Lemma \ref{lemma.van3} there exist elements $w''\in W'$ and $c\in R$ with $w''a = cw'$ and $\frac{cm}{w''} = a\frac{m}{w'} = \frac{bn}{1}$. By hypothesis, there exist $w'''\in W'$ and $d\in R$ such that $w'''=dw''$ and $w'''bn=dcm$, so that if we consider $\tilde{w} = w'''b$ and $x = dc$ the assertion follows.
\end{proof}

Let $\{W_i\mid i\in I\}$ be a global cover. The limit of the diagram
\begin{equation}\label{limitdiagramQ}
    \xymatrix{
    Q_W(Q_{W_i}(M)) \ar[r]\ar[rd] & Q_W(Q_{W_j}(Q_{W_i}(M)))\\  
    Q_W(Q_{W_j}(M)) \ar[ur] \ar[r] & Q_W(Q_{W_i}(Q_{W_j}(M)))  }
\end{equation}
will be denoted by $\Gamma_W(\widehat{M})$. Notice that due to the universal property of the limit, for the family $\{M\rightarrow Q_{W_i}\mid i\in I\}$ there is a unique morphism $\varphi: M\rightarrow\Gamma_1(\widehat{M})$. This morphism is of great importance in the following lemma.

\begin{lemma}\label{lemma.torsion}
    Let $\varphi: M\rightarrow\Gamma_1(\widehat{M})$ the morphism described above. Then ${\rm Coker}(\varphi)$ is $\kappa_+$-torsion. 
\end{lemma}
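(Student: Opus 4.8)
The plan is to show that for every element $\xi \in \Gamma_1(\widehat{M})$ there is an ideal $I \in \mathscr{L}(\kappa_+)$ with $I\xi \subseteq \varphi(M)$, since ${\rm Coker}(\varphi)$ being $\kappa_+$-torsion means exactly that each of its elements is annihilated by some ideal in $\mathscr{L}(\kappa_+)$. Recall that $\Gamma_1(\widehat{M})$ is the limit of the diagram \eqref{limitdiagramQ} with $W = 1$, so an element $\xi$ is a compatible family $(\xi_i)_{i\in I}$ with $\xi_i \in Q_{W_i}(M)$ and with matching images in $Q_{W_j}(Q_{W_i}(M))$ and $Q_{W_i}(Q_{W_j}(M))$. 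First I would fix, for each $i$, a representative $\xi_i = \frac{m_i}{w_i}$ with $w_i \in W_i$ and $m_i \in M$; then for each $i$ there is $w_i' \in W_i$ with $w_i' \cdot \frac{m_i}{w_i}$ lying in the image of $M \to Q_{W_i}(M)$, i.e. of the form $\frac{n_i}{1}$, by clearing the denominator along the word $W_i$ using Lemma \ref{lemma.van3}.

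Next I would use the schematic hypothesis together with Remark \ref{factor.comun} to find a common multiplier. Since $\{W_i \mid i\in I\}$ is a global cover, $\bigcap_i \mathscr{L}(W_i) = \mathscr{L}(\kappa_+)$, and the ideal $\sum_i R w_i'$ has non-trivial intersection with each $\mathscr{L}(W_i)$ (as $w_i' \in W_i$), hence contains a power $(R_{\ge t})^n$ of some $R_{\ge t}$; this is the ideal $I$ we want. The key point is then that $I$ multiplies $\xi$ into $\varphi(M)$. For $a = \sum_i a_i w_i' \in I$, acting on $\xi$ component-wise, the $i$-th component is $a \cdot \frac{m_i}{w_i}$; I would rewrite $a \cdot \frac{m_i}{w_i}$ using the relation $w_i' \cdot \frac{m_i}{w_i} = \frac{n_i}{1}$ and Lemma \ref{lemma.van3} to move the scalars past the word, showing that $a\xi_i$ actually comes from a single element of $M$ — and that these elements agree across $i$ because of the compatibility conditions defining the limit, using Lemma \ref{lemma.n/1} to compare the two ways an element of $M$ maps into the double localizations $Q_{W_j}(Q_{W_i}(M))$.

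The main obstacle I anticipate is the bookkeeping in the last step: showing that the local elements $a\xi_i \in Q_{W_i}(M)$, once pulled back to $M$, glue to a single well-defined element of $M$ whose image in $\Gamma_1(\widehat{M})$ is $a\xi$. One must check that the compatibility of the family $(\xi_i)$ in the limit diagram, after multiplying by $a$ and clearing denominators, descends to an honest equality in $M$ rather than merely in some further localization; here Lemma \ref{lemma.n/1} is essential, since it says that if $\frac{m}{w} = \frac{n}{1}$ in $Q_W(M)$ then already $\tilde w n = r m$ in $M$ for suitable $\tilde w = rw$, converting localized equalities back into ring-level ones. I would organize this by first treating a single scalar $a = w_i'$ (or a suitable common refinement of the $w_i'$ obtained from Remark \ref{factor.comun}), establishing that it sends $\xi$ into $\varphi(M)$, and then noting that $I \cdot \xi \subseteq \varphi(M)$ follows since $I$ is generated over $R$ by such elements and $\varphi(M)$ is an $R$-submodule. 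Finally, since $I \supseteq (R_{\ge t})^n \in \mathscr{L}(\kappa_+)$, the class of $\xi$ in ${\rm Coker}(\varphi)$ is $\kappa_+$-torsion, and as $\xi$ was arbitrary we conclude.
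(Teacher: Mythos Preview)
Your overall strategy and the ingredients you cite are the right ones, and your proposal closely parallels the paper's argument in spirit. However, there is a genuine gap in the way you collapse the argument into a single stage: the ideal $I=(R_{\ge t})^n$ you produce does not in fact satisfy $I\xi\subseteq\varphi(M)$.

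The first problem is with your denominator-clearing step. You choose $w_i'\in W_i$ so that $w_i'\xi_i=\frac{n_i}{1}$ in $Q_{W_i}(M)$; but multiplying the full tuple $\xi$ by an element $a=\sum_i a_i w_i'$ acts on \emph{every} component $\xi_j$, and for $j\ne i$ there is no reason $w_i'\xi_j$ should lie in the image of $M\to Q_{W_j}(M)$. The paper runs this step differently: it fixes $j$ and, for each $i$, uses the compatibility $\frac{1\otimes m_i}{w_i}=\frac{\xi_j}{1}$ in $Q_{W_i}(Q_{W_j}(M))$ together with Lemma~\ref{lemma.n/1} (applied with $Q_{W_j}(M)$ in place of $M$) to produce $w_i'\in W_i$ with $w_i'\xi_j\in\mathrm{Im}(M\to Q_{W_j}(M))$. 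These $w_i'$ depend on $j$, so one obtains exponents $t_j,n_j$ and then takes the maximum over the finitely many $j$. Only this guarantees that for $a\in(R_{\ge t})^n$ \emph{every} component $a\xi_j$ is of the form $\frac{n_j}{1}$.

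The second problem is that even once $a\xi=\bigl(\frac{n_j}{1}\bigr)_j$, this element need not lie in $\varphi(M)$: the $n_j$'s are only equal in the double localizations, not in $M$. You correctly flag this as the obstacle, but your proposed resolution---that a single $w_i'$ or a common refinement from Remark~\ref{factor.comun} already sends $\xi$ into $\varphi(M)$---does not work: Remark~\ref{factor.comun} produces common left multiples only for words in one fixed $W$, not across different $W_i$'s, and in any case equalizing the $n_j$'s requires a \emph{further} multiplication. The paper carries out a genuine second stage: for each fixed $a$ it uses Lemma~\ref{lemma.n/1} and Remark~\ref{factor.comun} (now legitimately, within each $W_i$) to find $w_i^*\in W_i$ with $w_i^* a\xi\in\varphi(M)$, yielding an ideal $(R_{\ge t'(a)})^{n'(a)}$ depending on $a$; then it invokes left Noetherianity of $R$ to pass to finitely many generators $a_1,\ldots,a_r$ of $(R_{\ge t})^n$, takes $t'=\max t'(a_k)$ and $n'=\max n'(a_k)$, and concludes $(R_{\ge t+t'})^{n+n'}\xi\subseteq\varphi(M)$. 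Your outline omits this Noetherian step entirely, and without it the ideal annihilating $\overline{\xi}$ in $\mathrm{Coker}(\varphi)$ is never shown to belong to $\mathscr{L}(\kappa_+)$.
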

\begin{proof}
Let $\xi = \bigl(\frac{m_i}{w_i}\bigr)_i\in\Gamma_1(\widehat{M})$ with $w_i\in W_i$ and $\frac{1}{w_i}\otimes 1 \otimes m_i=1 \otimes \frac{1}{w_j}\otimes m_j\in Q_{W_i}(Q_{W_j}(M))$, for all $i,j$. Fix $j$. Then $\frac{1\otimes m_i}{w_i} = \frac{\frac{1}{w_j} \otimes m_j}{1}\in Q_{W_i}$, whence by Lemma \ref{lemma.n/1} there exist elements $w_i'\in W_i$ and $a_i\in R$ such that $w_i' = a_iw$ and $w_i'(\frac{1}{w_j} \otimes m_j) = a_i(1\otimes m_i)$, that is, $w_i'(\frac{m_j}{w_j}) = \frac{a_im_i}{1}\in Q_{W_j}(M)$. By taking $I := \sum_{i\in I} Rw_i'$, it is clear that $I\in \bigcap_i\mathscr{L}(W_i) = \mathscr{L}(\kappa_+)$. In this way, there exist elements $t_j, n_j\in\mathbb{N}$ such that $R_{\ge t_j}^{n_j}\subseteq I$, which shows that $(R_{\ge {t_j}}^{n_j})\frac{m_j}{w_j}$ is contained in the direct image of the map $M\rightarrow Q_{W_j}(M)$, that is, $(R_{\ge {t_j}}^{n_j})\frac{m_j}{w_j}\subseteq {\rm Im}(M\rightarrow Q_{W_j}(M))$. If $n :={\rm max}\{n_i\mid i\in I\}$ and $t: ={\rm max}\{t_i\mid i\in I\}$, then it is straightforward to see that this reasoning is true for every element $j$.

Let $a\in R_{\ge t}^n$. Then $a\xi = (\frac{n_i}{1})_i$, for some elements $n_i\in M$ with $1\otimes 1\otimes n_i =  1\otimes 1\otimes n_j$ in $Q_{W_i}(Q_{W_j}(M))$, for every $i,j$. Fix $i$. Lemma  \ref{lemma.n/1} guarantees that for each $j$, there exist elements $\tilde{w}_j\in W_i$ and $x_j\in R$ such that $\tilde{w}_j = x_j$ and $\tilde{w}_j\frac{n_j}{1} = x_j\frac{n_i}{1}$. Now, by Remark \ref{factor.comun}, we can find elements $w^*_i\in W_i$ with $w^*_i\frac{n_i}{1} = w^*_i\frac{n_j}{1}$, for all $j$. Hence,  $w^*_ia\xi = \varphi(w^*_in_i)$. As above, by defining $J = \sum_{i\in I} Rw_i^*$, there exist elements $t'(a)$ and $n'(a)$ (notice that all elements depend on $a$) such that $\left(R_{\ge t'(a)}^{n'(a)}\right)a\xi\subseteq \varphi (M)$. Since $R$ is a left Noetherian ring,  $R_{\ge t}^n$ is finitely generated, say by the elements $a_1,\dotsc a_r$. By defining $n' = {\rm max}\{n(a_k)\mid 1\le k\le r\}$ and $t' = {\rm max}\{t'(a_k)\mid 1\le k\le r\}$, we have $\left( R_{\ge t+t'}^{n+n'}\right)\xi\subseteq \varphi(M)$, i.e., ${\rm Coker}(\varphi)$ is $\kappa_+$-torsion, which concludes the proof.
\end{proof}

\begin{proposition}\label{prop.tildeM.sheaf}
The presheaf $\widehat{M}$ is a sheaf.
\end{proposition}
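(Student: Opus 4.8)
The plan is to deduce both sheaf axioms from a single limit identification. By Remark~\ref{sheaf.iff.limit} it suffices to prove that for every word $W$ and every global cover $\{W_i\mid i\in I\}$ the canonical morphism $\widehat{M}(W)\to\Gamma_W(\widehat{M})$ induced by the structure maps of $\widehat{M}$ is an isomorphism, where $\Gamma_W(\widehat{M})$ is the limit of the diagram~\eqref{limitdiagramQ} (read with $Q_1(-)$ being the identity when $W=1$). Concretely, $\Gamma_W(\widehat{M})$ is the equalizer of the two natural maps $\prod_{i\in I}Q_W(Q_{W_i}(M))\rightrightarrows\prod_{(i,j)}Q_W(Q_{W_j}(Q_{W_i}(M)))$.

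First I would treat the case $W=1$, which is the crux. Here $\Gamma_1(\widehat{M})\subseteq\prod_{i}Q_{W_i}(M)$. Each $Q_{W_i}(M)$ is $\kappa_{W_i}$-closed, and since $\mathscr{L}(\kappa_+)\subseteq\mathscr{L}(W_i)$ by Lemma~\ref{preliminaryresult}, it is therefore $\kappa_+$-closed; as the class of $\kappa_+$-closed modules is stable under products and under kernels of maps between $\kappa_+$-torsion-free modules, $\Gamma_1(\widehat{M})$ is $\kappa_+$-closed. Next, the composite $M\to\prod_i Q_{W_i}(M)$ has kernel $\bigcap_i\kappa_{W_i}(M)$; since $m\in\kappa_{W_i}(M)$ if and only if ${\rm Ann}_R(m)\in\mathscr{L}(W_i)$ and $\bigcap_i\mathscr{L}(W_i)=\mathscr{L}(\kappa_+)$ by definition of a global cover, this kernel equals $\kappa_+(M)$. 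Hence $\varphi\colon M\to\Gamma_1(\widehat{M})$ induces an injection $M/\kappa_+(M)\hookrightarrow\Gamma_1(\widehat{M})$, and by Lemma~\ref{lemma.torsion} the quotient $\Gamma_1(\widehat{M})/(M/\kappa_+(M))={\rm Coker}(\varphi)$ is $\kappa_+$-torsion. The uniqueness characterization of the quotient module recalled after Definition~\ref{quotientmoduleMwrtQ} then forces $\Gamma_1(\widehat{M})\cong Q_{\kappa_+}(M)=\widehat{M}(1)$, and chasing the definition of $\widehat{M}$ on the morphisms $W_i\to 1$ shows this isomorphism is the canonical one.

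For $W\neq 1$ I would bootstrap from the previous case. Each Ore localization functor $S_i^{-1}R\otimes_R-$ is exact ($S_i^{-1}R$ is flat as a right $R$-module), so $Q_W=S_n^{-1}R\otimes_R\cdots\otimes_R S_1^{-1}R\otimes_R-$ is an exact functor on $R-{\rm Mod}$ and in particular preserves finite products and kernels. A global cover is finite, so $\Gamma_1(\widehat{M})$ is a finite limit; applying $Q_W$ to the diagram defining it reproduces the diagram~\eqref{limitdiagramQ} and yields $Q_W(\Gamma_1(\widehat{M}))=\Gamma_W(\widehat{M})$. Combining this with the case $W=1$ and the isomorphism $Q_W(Q_{\kappa_+}(M))\cong Q_W(M)$ of Lemma~\ref{iso.local} gives
\[
\Gamma_W(\widehat{M})\;=\;Q_W(\Gamma_1(\widehat{M}))\;\cong\;Q_W(Q_{\kappa_+}(M))\;\cong\;Q_W(M)\;=\;\widehat{M}(W),
\]
again compatibly with the structure maps; by Remark~\ref{sheaf.iff.limit}, $\widehat{M}$ is a sheaf.

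I expect the main obstacle to lie entirely in the case $W=1$: pinning down that $\Gamma_1(\widehat{M})$ is $\kappa_+$-closed (not merely $\kappa_+$-torsion-free), that $\ker\varphi$ and ${\rm Coker}(\varphi)$ are exactly $\kappa_+(M)$ and a $\kappa_+$-torsion module — the latter being precisely the content of Lemma~\ref{lemma.torsion} — and that the resulting abstract isomorphism with $Q_{\kappa_+}(M)$ is actually the one induced by $\widehat{M}$, so that it witnesses the sheaf condition rather than a mere coincidence of underlying modules. Once $W=1$ is settled, the passage to arbitrary $W$ is purely formal, using only the flatness of Ore localization and the finiteness of global covers.
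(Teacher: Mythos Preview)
Your proposal is correct and matches the paper's proof almost exactly: both reduce via Remark~\ref{sheaf.iff.limit} to a limit identification, handle $W=1$ by combining Lemma~\ref{lemma.torsion} with the characterizing property of $Q_{\kappa_+}(M)$, and pass to arbitrary $W\neq1$ using exactness of $Q_W$ together with Lemma~\ref{iso.local}. One small slip to fix: the phrase ``$Q_{W_i}(M)$ is $\kappa_{W_i}$-closed'' is not quite meaningful since $\mathscr{L}(W_i)$ is only a filter with associated \emph{pre}radical --- the clean statement is that $Q_{W_i}(M)$ is $\kappa_S$-closed for $S$ the last letter of $W_i$ and hence $\kappa_+$-closed, which is what the paper effectively uses (though the paper records only $\kappa_+$-torsion-freeness of $\Gamma_1(\widehat{M})$ and then appeals to \cite[Proposition~3.4]{Goldman} via the comparison map $\phi\colon Q_{\kappa_+}(M)\to\Gamma_1(\widehat{M})$, rather than arguing $\kappa_+$-closedness directly as you do).
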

\begin{proof}
Fix a global cover $\{W_i\mid i\in I\}$ and let $\varphi:M\rightarrow \Gamma_1(\widehat{M})$ be the map established in Lemma \ref{lemma.torsion}. Let us see $\Gamma_1(\widehat{M})\cong Q_{\kappa_+}(M) = \widehat{M}(1)$. Since for the family $\{Q_{\kappa_+}(M)\rightarrow Q_{W_i}\mid i\in I\}$ the universal property of the limit guarantees the existence of a unique morphism $\phi: Q_{\kappa_+}(M)\rightarrow\Gamma_1(\widehat{M})$, we obtain the following commutative diagram
\begin{equation}\label{lastdiagram}
     \xymatrix{
    & \Gamma_1(\widehat{M}) \ar[r]  & Q_{W_i}(M) \ar[r] & Q_{W_j}(Q_{W_i}(M))\\  
    M \ar[r] \ar[ur]^\varphi  & Q_{\kappa_+}(M) \ar[u]^\phi \ar[ur]^{\phi_i}& &   }
\end{equation}
It is clear that ${\rm Ker}(\phi)\subseteq \bigcap {\rm Ker}(\phi_i) = \bigcap \kappa_{W_i}(Q_{\kappa_+}(M)) = \kappa_+(Q_{\kappa_+}(M))=0$. On the other hand, since ${\rm Im}(\varphi)\subseteq {\rm Im}(\phi)$ and $\Gamma_1(\widehat{M})/{\rm Im}(\varphi)$ is $\kappa_+$-torsion (Lemma \ref{lemma.torsion}), it follows that  $\Gamma_1(\widehat{M})/{\rm Im}(\phi)$ is $\kappa_+$-torsion also. Besides, if $S_i$ is the last letter of $W_i$, then $Q_{W_i}(M)$ is $\kappa_{S_i}$-torsion-free, and so it is $\kappa_+$-torsion-free. In this way, $\Gamma_1(\widehat{M})$ is the limit of objects that are $\kappa_+$-torsion-free, and it is clear that $\Gamma_1(\widehat{M})$ is $\kappa_+$-torsion-free also. Since we have the short exact sequence 
    \[
    0\rightarrow Q_{\kappa_+}(M) \rightarrow \Gamma_1(\widehat{M}) \rightarrow \Gamma_1(\widehat{M})/{\rm Im}(\phi)\rightarrow 0,
    \]
    it follows that $Q_{\kappa_+}(M) \cong \Gamma_1(\widehat{M})$ \cite[Proposition 3.4]{Goldman}.

Finally, by recalling that $Q_W$ is an exact functor that commutes with finite limits, if $W\neq 1$ we have
    \[
    \Gamma_W(\widehat{M})\cong Q_W(\Gamma_1(\widehat{M}))\cong Q_W(Q_{\kappa_+}(M))\cong Q_W(M)=\widehat{M}(W).
    \]
    Therefore, by Remark \ref{sheaf.iff.limit} we conclude that $\widehat{M}$ is a sheaf.
    \end{proof}

Next, we define the notion of affine cover and quasi-coherent sheaf.

\begin{definition}
    An {\em affine} cover is a finite subset $\{T_i\mid i\in I\}$ of $\mathcal{O}$ such that $\bigcap_{i\in I}\mathscr{L}(T_i)=\mathscr{L}(\kappa_+)$.
\end{definition}

\begin{definition}
    A sheaf $\mathscr{F}$ is {\em quasi-coherent} if there exists an affine cover $\{T_i\mid i\in I\}$, and for each $i\in I$ there exists an $\mathsf{SG}$ $T_i^{-1}R$-module $M_i$ such that for all morphisms $V\rightarrow W$ in the category $\underline{\mathscr{W}}$, we have a commutative diagram given by
\begin{equation}\label{diagram.quasi}
\xymatrix{
   \mathscr{F}(T_iW) \ar[r]^{\rho_{T_iV}^{T_iW}}\ar[d] &  \mathscr{F}(T_iV)\ar[d] \\
   Q_W(M_i) \ar[r] & Q_V(M_i)
      }
\end{equation}
    where the vertical maps are isomorphisms in $\mathsf{LSG}-R$ and $Q_1(*):= Q_{\kappa_+}(*)$. $\mathscr{F}$ is called {\em coherent} if moreover all $M_i$ are finitely generated $\mathsf{SG}$ $T_i^{-1}R$-modules.
\end{definition}

\begin{remark}
    Note that the sheaf $\widehat{M}$ is quasi-coherent for each object in the category $\mathsf{LSG}-R$. If $M$ is finitely generated $\mathsf{SG}$ module, then $\widehat{M}$ is coherent.
\end{remark}

The proof of the following proposition is analogous to the proof of \cite[Theorem 1]{VanOystaeyenWillaert1995}. For the completeness of the paper, we include it here.

\begin{proposition}\label{iso.sheaves}
     If $\mathscr{F}$ is a quasi-coherent sheaf on $\mathscr{W}$ and $\Gamma_*(\mathscr{F})$ denotes its global sections $\mathscr{F}(1)$, then $\mathscr{F}$ is isomorphic to $\widehat{\Gamma_*(\mathscr{F})}$, the sheaf associated to $\Gamma_*(\mathscr{F})$.
\end{proposition}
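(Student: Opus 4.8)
The plan is to mimic the classical argument for quasi-coherent sheaves on a scheme: build the comparison morphism $\mathscr{F}\to\widehat{\Gamma_*(\mathscr{F})}$ section by section over words, and show it is an isomorphism by first checking it on the words $T_iW$ coming from a fixed affine cover $\{T_i\mid i\in I\}$, and then using the sheaf axioms to descend to arbitrary $W$.

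\textbf{Step 1: Construct the natural transformation.} First I would produce, for each word $W$, a morphism $\theta_W:\mathscr{F}(W)\to\widehat{\Gamma_*(\mathscr{F})}(W)$. For $W=1$ this is the identity $\mathscr{F}(1)=\Gamma_*(\mathscr{F})=Q_{\kappa_+}(\Gamma_*(\mathscr{F}))$ (using that $\mathscr{F}(1)$ is already $\kappa_+$-closed, as it is a limit of $\kappa_+$-torsion-free modules of the form $S^{-1}R$-modules, by the same reasoning as in the proof of Proposition~\ref{prop.tildeM.sheaf}). For $W=S_1\dotsc S_n\neq 1$, apply $Q_W$ to the restriction map $\rho_W:\mathscr{F}(1)\to\mathscr{F}(W)$; since $\mathscr{F}(W)$ is an $\mathsf{SG}$ $S_n^{-1}R$-module (Definition~\ref{def:presheaf}) and thus $\kappa_{S_n}$-local, hence $\kappa_W$-local, the map $\mathscr{F}(1)\to\mathscr{F}(W)$ factors through $Q_W(\mathscr{F}(1))=\widehat{\Gamma_*(\mathscr{F})}(W)$, giving $\theta_W$. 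Naturality in $W$ follows because all the maps in sight are the canonical localization maps, and $\underline{\mathscr{W}}$ is thin, so there is nothing to check beyond compatibility of the canonical arrows.

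\textbf{Step 2: Isomorphism over the affine cover.} The quasi-coherence diagram \eqref{diagram.quasi} gives isomorphisms $\mathscr{F}(T_iW)\cong Q_W(M_i)$ compatible with restrictions, and in particular $\mathscr{F}(T_i)\cong Q_{\kappa_+}(M_i)$ and (taking $W=1$ over $T_i$) $M_i$ maps into $\mathscr{F}(T_i)$ with $\kappa_+$-torsion cokernel and $\kappa_+$-torsion-free target, so $\mathscr{F}(T_i)\cong Q_{\kappa_+}(M_i)$ canonically. Now apply Lemma~\ref{iso.local} and the fact that $Q_W$ is exact and commutes with finite limits: since $\Gamma_*(\mathscr{F})=\mathscr{F}(1)$ is the limit of the diagram whose terms are the $\mathscr{F}(T_iW')$'s (by Remark~\ref{sheaf.iff.limit}, because $\{T_i\mid i\in I\}$ is a global cover), localizing at $T_i$ and then at $W$ yields $\widehat{\Gamma_*(\mathscr{F})}(T_iW)=Q_{T_iW}(\Gamma_*(\mathscr{F}))\cong Q_W(Q_{T_i}(\Gamma_*(\mathscr{F})))\cong Q_W(M_i)\cong\mathscr{F}(T_iW)$, and a diagram chase identifies this composite isomorphism with $\theta_{T_iW}$.

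\textbf{Step 3: Descent to arbitrary $W$.} For a general word $W$, choose the global cover $\{T_i\mid i\in I\}$ and apply Remark~\ref{sheaf.iff.limit} to both sheaves $\mathscr{F}$ and $\widehat{\Gamma_*(\mathscr{F})}$ (the latter is a sheaf by Proposition~\ref{prop.tildeM.sheaf}): both $\mathscr{F}(W)$ and $\widehat{\Gamma_*(\mathscr{F})}(W)$ are the limits of the diagrams with terms indexed by $T_iW$ and $T_iT_jW$. Since $\theta$ is a natural transformation, it induces a morphism between these two limit diagrams which is a levelwise isomorphism by Step~2, hence an isomorphism on limits, i.e., $\theta_W$ is an isomorphism. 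Thus $\theta:\mathscr{F}\xrightarrow{\;\sim\;}\widehat{\Gamma_*(\mathscr{F})}$ is an isomorphism of sheaves.

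\textbf{Main obstacle.} I expect the delicate point to be Step~1–2, specifically verifying that the various canonical maps genuinely assemble into a \emph{natural} transformation and that the isomorphism in \eqref{diagram.quasi} is compatible with the comparison morphism $\theta$ rather than merely abstractly present — one must track that the vertical isomorphisms in the quasi-coherence diagram are the ones induced by the universal property of $Q_W$ applied to $M_i\to\mathscr{F}(T_i)$, and that these agree with the localization maps defining $\widehat{\Gamma_*(\mathscr{F})}$. Once that bookkeeping is in place, the descent in Step~3 is formal, relying only on exactness of $Q_W$ and its commutation with finite limits (already used in Proposition~\ref{prop.tildeM.sheaf}).
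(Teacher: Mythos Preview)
Your overall plan---reduce to the affine cover, use that both $\mathscr{F}(W)$ and $Q_W(\Gamma_*(\mathscr{F}))$ are limits over the $T_i$, and show the diagrams are isomorphic---is exactly the paper's strategy, and you correctly flag the compatibility bookkeeping as the real work.

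There is, however, a genuine gap in Step~1. Your direct construction of $\theta_W$ relies on the implication ``$\mathscr{F}(W)$ is an $S_n^{-1}R$-module, hence $\kappa_{S_n}$-local, hence $\kappa_W$-local,'' but this goes the wrong way. Since every left ideal meeting $S_n$ contains an element of $W$, one has $\mathscr{L}(S_n)\subseteq\mathscr{L}(W)$, so $\kappa_{S_n}\le\kappa_W$; being closed for the \emph{smaller} filter $\mathscr{L}(S_n)$ says nothing about closure for $\mathscr{L}(W)$. Concretely, to factor $\rho_W:\mathscr{F}(1)\to\mathscr{F}(W)$ through $Q_W(\mathscr{F}(1))=S_n^{-1}R\otimes_R\cdots\otimes_R S_1^{-1}R\otimes_R\mathscr{F}(1)$ you would need each intermediate $S_k$ to act invertibly on $\mathscr{F}(W)$, and Definition~\ref{def:presheaf} only gives you invertibility of $S_n$. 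For a general sheaf this fails; the factorization is not free.

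The paper sidesteps this by never attempting to build $\theta_W$ directly. Instead it works on the cover from the start: using quasi-coherence it produces explicit isomorphisms $\psi_i^W:Q_W(\mathscr{F}(T_i))\to\mathscr{F}(T_iW)$ via the telescoping formula~(\ref{psi_i}), then checks the two compatibility squares~(\ref{diagram.doble}) with the companion maps $\psi_{ij}^W$, and only then concludes that the two limit diagrams are isomorphic (which a posteriori yields the comparison map). Your Steps~2--3 are morally this argument, so if you drop the flawed global construction of $\theta$ in Step~1 and instead define the comparison map on the cover first (via the $\psi_i^W$) and then pass to the limit, your proof goes through and coincides with the paper's. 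The $W=1$ case (showing $\Gamma_*(\mathscr{F})$ is $\kappa_+$-closed) you handle correctly and in the same way as the paper.
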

\begin{proof}
First of all, notice that we can suppose that $M_i$ is $\kappa_+$-closed because if this is not the case, then we can replace it by $Q_{\kappa_+}(M_i)$ and the commutative diagram \ref{diagram.quasi} holds. We want to see that $\mathscr{F}(W)\cong Q_W(\Gamma_*(\mathscr{F}))$ for all $W\in\mathscr{W}$. If $W\neq 1$, then by Remark \ref{sheaf.iff.limit} and the fact that $Q_W$ commutes with finite limits (recall that $Q_W$ is an exact functor), it follows that $\mathscr{F}(W)$ is the limit of the diagram (\ref{limitdiagram}), while that $Q_W(\Gamma_*(\mathscr{F}))$ is the limit of the diagram
\begin{equation}\label{limitdiagram2}
\xymatrix@R=8pt{
    Q_W(\mathscr{F}(W_i)) \ar[dr] \ar[r] & Q_W(\mathscr{F}(W_iW_j))\\
    Q_W(\mathscr{F}(W_j)) \ar[ur] \ar[r] & Q_W(\mathscr{F}(W_jW_i))}
\end{equation}

Notic that we have the isomorphism $\mathscr{F}(T_i)\cong Q_{\kappa_+}(M_i)=M_i$, and by the diagram \ref{diagram.quasi}, for every $W$ there exists an isomorphism $\psi_i^W:Q_W(\mathscr{F}(T_i))\rightarrow \mathscr{F}(T_iW)$. If $W = S_1\dots S_n$ and $W_t := S_1\dots S_t$, then we obtain the following commutative diagram 
\begin{equation}\label{diagram.largo}
\begin{tiny}
\xymatrix{
   \mathscr{F}(T_i) \ar[r]^{\rho_{T_iW_1}^{T_i}} \ar[d] & \mathscr{F}(T_iW_1) \ar[r]^{\rho_{T_iW_2}^{T_iW_1}} \ar[d]^{\psi_i^{W_1}} & \mathscr{F}(T_iW_2) \ar[r] \ar[d]^{\psi_i^{W_2}} &\dotsb \ar[r] &\mathscr{F}(T_iW_{n-1})\ar[r]^{\rho_{T_iW}^{T_iW_{n-1}}} \ar[d]^{\psi_i^{W_{n-1}}} & \mathscr{F}(T_iW) \ar[d]^{\psi_i^{W}}\\
   \mathscr{F}(T_i) \ar[r] &  Q_{W_1}(\mathscr{F}(T_i))\ar[r] & Q_{W_2}(\mathscr{F}(T_i))\ar[r] &\dotsb \ar[r]& Q_{W_{n-1}}(\mathscr{F}(T_i)) \ar[r] &Q_{W}(\mathscr{F}(T_i))
      }
\end{tiny}
\end{equation}

Since $Q_{W_t}(\mathscr{F}(T_i))$ is an $S_t^{-1}R$-module, and so $\mathscr{F}(T_iW_t)$ also is, for an element $s_t\in S_t$ we can multiply by $s_t^{-1}$, whence the commutativity of the diagram above guarantees that 
\begin{small} 
    \begin{equation}\label{psi_i}
    \psi_i^W\left(\frac{1}{s_n}\otimes\dots\otimes\frac{1}{s_1}\otimes m\right)=s_n^{-1}\rho_{T_iW}^{T_iW_{n-1}}(s_{n-1}^{-1}\rho_{T_iW_{n-1}}^{T_iW_{n-2}}(\dots s_2^{-1}\rho_{T_iW_2}^{T_iW_1}(s_1^{-1}\rho_{T_iW_1}^{T_i}(m))\dots )).
    \end{equation}
\end{small}
    
On the other hand, we have 
\[
\mathscr{F}(T_iT_jW)\cong Q_{T_jW}(\mathscr{F}(T_i))=Q_W(Q_{T_j}(\mathscr{F}(T_i)))\cong Q_W(\mathscr{F}(T_iT_j)).
\]
If we write as $\psi_{ij}^W:Q_W(\mathscr{F}(T_iT_j))\rightarrow \mathscr{F}(T_iT_jW)$ the isomorphism above, by using a similar diagram to the above it can be seen that  

\begin{tiny} 
    \begin{equation}\label{psi_ij}
     \psi_{ij}^W\left(\frac{1}{s_n}\otimes\dots\otimes\frac{1}{s_1}\otimes m\right) = s_n^{-1}\rho_{T_iT_jW}^{T_iT_jW_{n-1}}(s_{n-1}^{-1}\rho_{T_iT_jW_{n-1}}^{T_iT_jW_{n-2}}(\dots s_2^{-1}\rho_{T_iT_jW_2}^{T_iT_jW_1}(s_1^{-1}\rho_{T_iT_jW_1}^{T_i}(m))\dots )).
    \end{equation}
\end{tiny}

Notice that $\rho_{T_iT_jW_t}^{T_iW_t}$ and $\rho_{T_jT_iW_t}^{T_iW_t}$ are $S_t^{-1}R$-linear for $t = 1,\dotsc,n$, since both are $R$-homomorphisms between $S_t^{-1}R$-modules. In this way, the expressions (\ref{psi_i}) and (\ref{psi_ij}) imply the commutativity of the following two diagrams 
\begin{equation}\label{diagram.doble}
\begin{tabular}{cc}
\begin{minipage}{2.2in}
\begin{displaymath}
\leftline{\xymatrixcolsep{3pc}\xymatrix{
    Q_W(\mathscr{F}(T_i))\ar[r]^{Q_W\left(\rho_{T_iT_j}^{T_i}\right)} \ar[d]^{\psi_i} & Q_W(\mathscr{F}(T_iT_j))\ar[d]^{\psi_{ij}} \\
    \mathscr{F}(T_iW)) \ar[r]^{\rho_{T_iT_jW}^{T_iW}} & \mathscr{F}(T_iT_jW) }}
\end{displaymath}
\end{minipage}
& 
\begin{minipage}{2.2in}
\begin{displaymath}
\leftline{\xymatrixcolsep{3pc}\xymatrix{
     Q_W(\mathscr{F}(T_i)) \ar[r]^{Q_W\left(\rho_{T_jT_i}^{T_i}\right)} \ar[d]^{\psi_i}  & Q_W(\mathscr{F}(T_jT_i)) \ar[d]^{\psi_{ji}}\\
     \mathscr{F}(T_iW) \ar[r]^{\rho_{T_jT_iW}^{T_iW}} & \mathscr{F}(T_jT_iW)
      }}
\end{displaymath}
\end{minipage}
\end{tabular}
\end{equation}

Hence, it is clear that diagrams (\ref{sheaf.iff.limit}) and (\ref{limitdiagram2}) must to have isomorphic limits, that is, $\mathscr{F}\cong Q_W(\mathscr{F}(\Gamma_*))$. Besides, for a morphism $V\rightarrow W$ the map $\rho_V^W$ is determined by the maps $\rho_{T_iV}^{T_iW}$ and $\rho_{T_iT_jV}^{T_iT_jW}$, which shows that the diagram 
\begin{equation*}
    \xymatrix{
    Q_W(\Gamma_*(\mathscr{F})) \ar[r]\ar[d] & Q_v(\Gamma_*(\mathscr{F}))\ar[d]\\
    \mathscr{F}(W)\ar[r]^{\rho_v^w} & \mathscr{F}(V)
    }
\end{equation*}
is commutative. 

For $W = 1$ we have to show that $\Gamma_*(\mathscr{F})\cong Q_{\kappa_+}(\Gamma_*(\mathscr{F}))$. Since $\mathscr{F}(T_i)$ and $\mathscr{F}(T_iT_j)$ are $\kappa_+$-torsion-free ($\mathscr{F}(T_i)\cong M_i$ and  $\mathscr{F}(T_iT_j)\cong T_j^{-1}(\mathscr{F}(T_i))$), then $\Gamma_*(\mathscr{F})$ is $\kappa_+$-torsion-free because it is the limit of objects $\kappa_+$-torsion-free. Let us see that $\Gamma_*(\mathscr{F})$ is $\kappa_+$-injective. By \cite[Proposition 3.2]{Goldman}, it is sufficient to show that for all $I\in\mathscr{L}(\kappa_+)$, every $R$-homomorphism $f: I\rightarrow \Gamma_*(\mathscr{F})$ can be extended to a $R$-homomorphism $g: R\rightarrow \Gamma_*(\mathscr{F})$.

Since $\mathscr{F}(T_i)$ is $\kappa_+$-injective, the map $\rho_{T_i}\circ f$ can be extended to a map $g_i:R\rightarrow \mathscr{F}(T_i)$. If $x_i = g_i(1)$, then $g_i(r)=rx_i$ for every $r\in R$. In particular, for each $a\in I$ we have that  $a\rho_{T_iT_j}^{T_i}(x_i)=\rho_{T_iT_j}^{T_i}(\rho_{T_i}(f(a)))=\rho_{T_iT_j}^{T_j}(\rho_{T_j}(f(a)))=\rho_{T_iT_j}^{T_j}(x_j)$, which shows that there exists an element $x\in \Gamma_*(\mathscr{F})$ such that $\rho_{T_i}(x)=x_i$, for every $i$. Notice that the map $g:I\rightarrow \Gamma_*(\mathscr{F})$ defined by $g(r) = rx$ extends $f$, so we conclude $\Gamma_*(\mathscr{F})= Q_{\kappa_+}(\Gamma_*(\mathscr{F}))$.
\end{proof}

\begin{theorem}\label{th.quasi-coherent}
The category of quasi-coherent sheaves is equivalent to the category $(R,\kappa_+)-\mathsf{LSG}$.
\end{theorem}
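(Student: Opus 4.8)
The plan is to exhibit a quasi-inverse pair of functors between the category $\mathsf{QCoh}$ of quasi-coherent sheaves on $\underline{\mathscr{W}}$ and the category $(R,\kappa_+)-\mathsf{LSG}$: the global sections functor $\Gamma_*$ and the associated-sheaf functor $\widehat{(-)}$, together with natural isomorphisms $\Gamma_*\circ\widehat{(-)}\cong\mathrm{id}$ and $\widehat{(-)}\circ\Gamma_*\cong\mathrm{id}$. Most of the substantive work is already in place: Proposition \ref{prop.tildeM.sheaf}, with the remark following the definition of quasi-coherence, gives that $\widehat{M}$ is a quasi-coherent sheaf, and Proposition \ref{iso.sheaves} gives $\mathscr{F}\cong\widehat{\Gamma_*(\mathscr{F})}$ for every quasi-coherent $\mathscr{F}$; what remains is to organize these into functors and to promote the isomorphism of Proposition \ref{iso.sheaves} to a natural one.

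First I would fix that a morphism in $\mathsf{QCoh}$ is a natural transformation of the underlying contravariant functors $\underline{\mathscr{W}}\to\mathsf{LSG}-R$; since every transition map $\rho_V^W$ is $R$-linear and the $S^{-1}R$-module structures on the sections are the ones induced from $R$, such a transformation is automatically compatible with all of them, so $\mathsf{QCoh}$ is an honest additive category. The functor $\widehat{(-)}\colon(R,\kappa_+)-\mathsf{LSG}\to\mathsf{QCoh}$ sends an object $M$ to $\widehat{M}$ (quasi-coherent via the affine cover given by the good Ore sets $S_i$ witnessing schematicness together with the modules $M_i=Q_{S_i}(M)$), and a homogeneous $R$-homomorphism $f\colon M\to N$ to the family $\widehat{f}_W=Q_W(f)$ for $W\neq 1$ and $\widehat{f}_1=Q_{\kappa_+}(f)$; functoriality of the localizations $Q_W$ and $Q_{\kappa_+}$, together with naturality of the canonical maps $M\to Q_W(M)$ and of the isomorphism of Lemma \ref{iso.local}, makes the naturality squares commute and shows that $\widehat{(-)}$ preserves identities and composition. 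The functor $\Gamma_*\colon\mathsf{QCoh}\to(R,\kappa_+)-\mathsf{LSG}$ sends $\mathscr{F}$ to $\mathscr{F}(1)$ and a sheaf morphism $\eta$ to $\eta_1$; by the $W=1$ part of the proof of Proposition \ref{iso.sheaves}, $\mathscr{F}(1)$ is $\kappa_+$-torsion-free (a limit of the $\kappa_+$-torsion-free modules $\mathscr{F}(T_i)\cong M_i$ and $\mathscr{F}(T_iT_j)$) and $\kappa_+$-injective, hence $\kappa_+$-closed, so it indeed lies in $(R,\kappa_+)-\mathsf{LSG}$, and functoriality is clear.

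For the natural isomorphism $\Gamma_*\circ\widehat{(-)}\cong\mathrm{id}$: if $M$ is an object of $(R,\kappa_+)-\mathsf{LSG}$ then $\kappa_+(M)=0$ and $M$ is $\kappa_+$-closed, so $\Gamma_*(\widehat{M})=\widehat{M}(1)=Q_{\kappa_+}(M)$ equals $M$ via the canonical homomorphism $\varphi_M$ (the unique $\kappa_+$-closed module containing $M/\kappa_+(M)=M$ with $\kappa_+$-torsion cokernel is $M$ itself); since $\varphi_{(-)}$ is natural this is a natural isomorphism. For $\widehat{(-)}\circ\Gamma_*\cong\mathrm{id}$: Proposition \ref{iso.sheaves} provides, for each quasi-coherent $\mathscr{F}$, an isomorphism of sheaves $\theta_{\mathscr{F}}\colon\widehat{\Gamma_*(\mathscr{F})}\to\mathscr{F}$, and it remains to verify that the family $(\theta_{\mathscr{F}})$ is natural in $\mathscr{F}$. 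I would check this componentwise: on a word $W$, the component of $\theta_{\mathscr{F}}$ is the comparison map $Q_W(\Gamma_*(\mathscr{F}))\to\mathscr{F}(W)$ built in the proof of Proposition \ref{iso.sheaves} out of the isomorphisms $\psi_i^W$ and $\psi_{ij}^W$, and the explicit formulas (\ref{psi_i}) and (\ref{psi_ij}) express these purely in terms of the restriction maps $\rho$ of $\mathscr{F}$; any sheaf morphism $\eta\colon\mathscr{F}\to\mathscr{G}$ commutes with all of the $\rho$'s, hence with the $\psi$'s, hence with the maps they induce between the defining limits, which is precisely the naturality square for $\theta$ over $W$. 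Having both natural isomorphisms, we conclude that $\Gamma_*$ and $\widehat{(-)}$ are quasi-inverse equivalences of categories.

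The step I expect to be the main obstacle is exactly this last naturality check: Proposition \ref{iso.sheaves} is proved for a single fixed sheaf and its comparison map is constructed through a chosen affine cover and the cocycle-type formulas (\ref{psi_i}), (\ref{psi_ij}), so one must argue with some care that it depends functorially on $\mathscr{F}$; the cleanest formulation is to recognize $\theta_{\mathscr{F}}$ as the canonical morphism $\widehat{\Gamma_*(\mathscr{F})}\to\mathscr{F}$ induced by the universal properties of the localizations $Q_W$ and the restriction maps of $\mathscr{F}$ (equivalently, as the counit of the resulting adjunction $\widehat{(-)}\dashv\Gamma_*$), after which naturality is formal. All remaining verifications — that the cover-dependent data glue, that $\widehat{(-)}$ and $\Gamma_*$ are well defined on morphisms, and that the two natural isomorphisms are mutually compatible — are routine.
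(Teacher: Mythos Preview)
Your proposal is correct and follows essentially the same approach as the paper: both use $\widehat{(-)}$ and $\Gamma_*$ as quasi-inverse functors, invoke Proposition \ref{prop.tildeM.sheaf} and Proposition \ref{iso.sheaves} for the two required isomorphisms, and use the $W=1$ part of the proof of Proposition \ref{iso.sheaves} to see that $\Gamma_*(\mathscr{F})$ lands in $(R,\kappa_+)-\mathsf{LSG}$. The paper's proof is considerably terser and does not address the naturality of $\theta_{\mathscr{F}}$ at all; your explicit check via the formulas (\ref{psi_i}) and (\ref{psi_ij}) fills a detail the paper simply asserts.
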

\begin{proof}
Let $\mathscr{F}$ be a quasi-coherent sheaf. From the last part of the proof of the Proposition \ref{iso.sheaves}, we have that $\Gamma_*(\mathscr{F})$ is an object of $(R,\kappa_+)-\mathsf{LSG}$. Moreover, if $M$ belongs to $(R,\kappa_+)-\mathsf{LSG}$, then $M\cong Q_{\kappa_+}(M)$. In this way, $\widehat{\square}$ and $\Gamma_*\square$ are functors between the category $(R,\kappa_+)-\mathsf{LSG}$ and the category of quasi-coherent sheaves, which are equivalences by Propositions \ref{prop.tildeM.sheaf} and \ref{iso.sheaves}. 
\end{proof}

Finally, we arrive to the most important result of the paper: the {\em Serre-Artin-Zhang-Verevkin} theorem for semi-graded rings.

\begin{theorem}{\bf (Serre-Artin-Zhang-Verevkin theorem).}\label{Serre-theo}
The category of coherent sheaves is equivalent to ${\rm Proj}(R)$.
\begin{proof}
From Theorem \ref{th.quasi-coherent}, it is sufficient to show that $M$ belongs to ${\rm Proj}(R)$ if and only if $\widehat{M}$ is coherent.
    
We fix a cover $\{T_i\mid i\in I\}$. If $M$ is an element of ${\rm Proj}(R)$, then there exist $m_1, \dotsc ,m_k\in M$ such that $M/N$ is $\kappa_+$-torsion with $N = \langle m_1,\dotsc,m_k\rangle^{\mathsf{SG}}$. Let $f_i: M\rightarrow T_i^{-1}M$ be the canonical map. It is straightforward to see that $\langle \frac{m_1}{1},\dots,\frac{m_k}{1}\rangle^{\mathsf{SG}}_R=f_i(N)$, and that the $T_i^{-1}M$-submodule $J_i=\langle \frac{m_1}{1},\dots,\frac{m_k}{1}\rangle^{\mathsf{SG}}_{T_i^{-1}R}$ satisfies the relation $f_i(N)\subseteq J_i$. Let $m\in M$. Since $M/N$ is $\kappa_+$-torsion, there exists $I\in\mathscr{L}(\kappa_+)$ such that $Im\subseteq N$. By using that $T_i$ is non-trivial, there exists $t_i\in I\cap T_i$, whence $t_im\in N$, which shows that $\frac{t_im}{1}\in f_i(N)$. Since $J_i$ is a $T_i^{-1}R$-module and $\frac{t_im}{1}\in J_i$, then $J_i = T_i^{-1}M$, that is, $T_i^{-1}M$ is finitely generated as an $\mathsf{SG}$ $T_i^{-1}R$-module.

Suppose that every one of the $t_i^{-1}M$ is a finitely generated $\mathsf{SG}$ $T_i^{-1}R$-module. Note that if $T_i^{-1}M = \langle \frac{m_{1,i}}{s_{1,i}},\dots,\frac{m_{t_i,i}}{s_{t_i,i}}\rangle^{\mathsf{SG}}_{T_i^{-1}R}$, then $T_i^{-1}M=\langle \frac{m_{1,i}}{1},\dots,\frac{m_{t_i,i}}{1}\rangle^{\mathsf{SG}}_{T_i^{-1}R}$. Since there are finite elements $i$'s, then the union set $\bigcup_{i\in I} \{m_{1,i},\dots,m_{t_i,i}\}$ is also finite, $\{m_1,\dots,m_k\}$ say. If we define $N = \langle m_1,\dots,m_k\rangle^{\mathsf{SG}}$, it is straightforward to see that $T_i^{-1}N$ is an $\mathsf{SG}$ $T_i^{-1}R$-submodule of $T_i^{-1}M$, whence $T_i^{-1}N = T_i^{-1}M$. 

For an element $m\in M$, since $\frac{m}{1}\in T_i^{-1}N$ there exist elements $n_i\in N$ and $t_i\in T_i$ such that $\frac{m}{1}=\frac{n_i}{t_i}$. Hence, there exist $c_i,d_i\in R$ such that $c_im = d_in_i$ and $c_i = d_it_i\in T_i$, whence $c_im\in N$. By using that $c_i\in T_i$ for each $i\in I$ and that $\{T_i\mid \in I\}$ is an affine cover, it follows that $I = \sum Rc_i\in\mathscr{L}(\kappa_+)$. We conclude that $Im\subseteq N$, and therefore $M/N$ is $\kappa_+$-torsion.
\end{proof}
\end{theorem}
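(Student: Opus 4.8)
The plan is to bootstrap from Theorem~\ref{th.quasi-coherent}, which already establishes an equivalence between quasi-coherent sheaves and $(R,\kappa_+)-\mathsf{LSG}$ via the mutually inverse functors $\widehat{\square}$ and $\Gamma_*\square$. Since coherent sheaves form a full subcategory of the quasi-coherent ones and ${\rm Proj}(R)$ is by definition the full subcategory of $\mathsf{LSG}$-$\kappa_+$-finitely generated objects of $(R,\kappa_+)-\mathsf{LSG}$, it is enough to check that the equivalence carries one subcategory onto the other, i.e.\ that for $M\in(R,\kappa_+)-\mathsf{LSG}$ one has $M\in{\rm Proj}(R)$ if and only if $\widehat{M}$ is coherent. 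Everything then reduces to a translation between ``$M/N$ is $\kappa_+$-torsion for some finitely $\mathsf{SG}$-generated $N$'' and ``each $T_i^{-1}M$ is a finitely generated $\mathsf{SG}$ $T_i^{-1}R$-module'', where $\{T_i\mid i\in I\}$ is a fixed affine cover.

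For the implication $M\in{\rm Proj}(R)\Rightarrow\widehat{M}$ coherent, I would start from $N=\langle m_1,\dots,m_k\rangle^{\mathsf{SG}}$ with $M/N$ $\kappa_+$-torsion, and pass to the localizations $f_i\colon M\to T_i^{-1}M$. The point is that $f_i$ sends $\mathsf{SG}$-generators of $N$ to $\mathsf{SG}$-generators, so $f_i(N)$ lies inside the $T_i^{-1}R$-submodule $J_i=\langle \tfrac{m_1}{1},\dots,\tfrac{m_k}{1}\rangle^{\mathsf{SG}}_{T_i^{-1}R}$. Given any $m\in M$, $\kappa_+$-torsionness yields $I\in\mathscr{L}(\kappa_+)$ with $Im\subseteq N$; because $T_i$ is non-trivial, the argument of Lemma~\ref{preliminaryresult} produces $t_i\in I\cap T_i$, so $t_im\in N$ and $\tfrac{m}{1}=\tfrac{t_im}{t_i}\in J_i$. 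Hence $J_i=T_i^{-1}M$, which is precisely the assertion that $\widehat{M}$ is coherent.

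For the converse, suppose each $T_i^{-1}M=\langle \tfrac{m_{1,i}}{s_{1,i}},\dots,\tfrac{m_{r_i,i}}{s_{r_i,i}}\rangle^{\mathsf{SG}}_{T_i^{-1}R}$; since $T_i\subseteq R''$ is good one may clear denominators and assume all $s_{j,i}=1$. Collecting the finitely many numerators over the finitely many indices $i$ gives a finite set $\{m_1,\dots,m_k\}$; set $N=\langle m_1,\dots,m_k\rangle^{\mathsf{SG}}$. Then $T_i^{-1}N$ is an $\mathsf{SG}$ $T_i^{-1}R$-submodule of $T_i^{-1}M$ containing all the generators, so $T_i^{-1}N=T_i^{-1}M$. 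For $m\in M$, write $\tfrac{m}{1}=\tfrac{n_i}{t_i}$ with $n_i\in N$, $t_i\in T_i$; the Ore condition gives $c_i,d_i\in R$ with $c_im=d_in_i\in N$ and $c_i=d_it_i\in T_i$. Since $\{T_i\mid i\in I\}$ is an affine cover, $I=\sum_i Rc_i\in\mathscr{L}(\kappa_+)$, and $Im\subseteq N$; thus $M/N$ is $\kappa_+$-torsion, so $M\in{\rm Proj}(R)$. Combining the two directions with Theorem~\ref{th.quasi-coherent} gives the equivalence between coherent sheaves and ${\rm Proj}(R)$.

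The main obstacle I anticipate is not conceptual but the careful semi-graded bookkeeping: one must verify that forming $\langle-\rangle^{\mathsf{SG}}$ commutes with Ore localization at the good sets $T_i$, so that localized $\mathsf{SG}$-generators genuinely generate and $\mathsf{SG}$-submodules localize to $\mathsf{SG}$-submodules, and one must invoke non-triviality of the good Ore sets exactly where an element of $I\cap T_i$ is needed. As in the rest of the paper, no connectedness hypothesis is used.
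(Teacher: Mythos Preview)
Your proposal is correct and follows essentially the same approach as the paper's own proof: both reduce via Theorem~\ref{th.quasi-coherent} to the equivalence ``$M\in{\rm Proj}(R)\Longleftrightarrow\widehat{M}$ coherent'', and both directions are argued in the same way (forward by pushing $\mathsf{SG}$-generators through $f_i$ and using non-triviality of $T_i$ to hit $I\cap T_i$; backward by clearing denominators, pooling the numerators into a single $N$, and using the affine-cover condition to place $\sum_i Rc_i$ in $\mathscr{L}(\kappa_+)$). Your explicit invocation of Lemma~\ref{preliminaryresult} and the identity $\tfrac{m}{1}=\tfrac{t_im}{t_i}$ are minor expository variations on exactly the same steps.
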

%Next, we illustrate our results with some remarkable algebras appearing in the literature.

Next, we show that the notion of schematicness in the semi-graded setting generalizes the corresponding concept in the case of connected and $\mathbb{N}$-graded algebras introduced and studied by Van Oystaeyen and Willaert \cite{VanOystaeyen2000, VanOystaeyenWillaert1995, vanOystaeyenWillaert1996, VanOystaeyenWillaert1997, Willaert1998}.

\begin{remark}\label{we-generalize}
Consider a positively graded left Noetherian ring $R$. It is clear that $R_+ = R_{\ge 1}$. Note that if $R$ is generated in degree one, then $R_{\ge t} = (R_+)^t$, which shows that $\mathscr{L}(\kappa_{+}) = \{I\vartriangleleft_l R\mid {\rm there\ exists}\ n\in \mathbb{N}\ {\rm with}\ (R_{+})^n\subseteq I\}$. On the other hand, the $\mathsf{LSG}$ modules are exactly the same $\mathbb{N}$-graded modules, and the good left Ore sets coincide with the homogeneous left Ore sets. In this way, the notion of schematic ring presented in this paper generalizes the corresponding notion introduced by Van Oystaeyen and Willaert \cite{VanOystaeyenWillaert1995}. Last, but not least, notice that in the $\mathbb{N}$-graded setting, the left noetherianity of $R$ implies that the finitely generated objects of $(R,\kappa_+)-\mathsf{LSG}$ are Noetherian, which shows that Theorem \ref{Serre-theo} generalizes \cite[Theorem 3]{VanOystaeyenWillaert1995}.
\end{remark}

Next, we show some examples that illustrate our Theorem \ref{Serre-theo} in the case of non-$\mathbb{N}$-graded rings where \cite[Theorem 3]{VanOystaeyenWillaert1995} cannot be applied.

\begin{example}\label{examplesillustrating}
\begin{enumerate}
    \item [(i)] Consider the first Weyl algebra $A_1(\Bbbk) = \Bbbk\{ x,y\}/ \langle yx-xy-1\rangle$ over a field $\Bbbk$ of ${\rm char}(\Bbbk) = p>0$. It is well-known that $A_1(\Bbbk)$ is a non-$\mathbb{N}$-graded ring, the set $\{x^ny^m\mid n, m\in\mathbb{N}\}$ is $\Bbbk$-basis of $A_1(\Bbbk)$, and $A_1(\Bbbk)$ is a Noetherian ring. Since $x^p, y^p\in Z(A_1(\Bbbk))$, it is clear that $\{x^{pk}\mid k\in\mathbb{N}\}$ and $\{y^{pk}\mid k\in\mathbb{N}\}$ are good left Ore sets. Besides, if $k_1,k_2\in\mathbb{N}$ then $A_1(\Bbbk)x^{pk_1} + A_1(\Bbbk)y^{pk_2}$ is a two-sided ideal of $A_1(\Bbbk)$ which is a left $\mathsf{SG}$ submodule, whence $A_1(\Bbbk)_{\ge pk_1+pk_2}\subseteq A_1(\Bbbk)x^{pk_1} + A_1(\Bbbk)y^{pk_2}$. Therefore, $A_1(\Bbbk)$ is a schematic algebra and Theorem \ref{Serre-theo} holds.
    \item [(ii)] In a similar way, it can be shown that the $n$-th Weyl algebra $A_n(\Bbbk)$ is schematicness when ${\rm char}(\Bbbk) = p>0$.
    \item [(iii)] The well-known Jordan plane $\Bbbk\{x,y\}/\langle yx-xy-y^2\rangle$ is schematic when ${\rm char}(\Bbbk) = p>0$ since the sets $\{x^{pk}\mid k\in\mathbb{N}\}$ and $\{y^{pk}\mid k\in\mathbb{N}\}$ are good left Ore sets.
\end{enumerate}
\end{example}

For the skew PBW extensions introduced by Gallego and Lezama \cite{GallegoLezama2011} (c.f. \cite{ReyesRodriguez2021, ReyesSuarez2021}), which are examples of non-$\mathbb{N}$-graded rings, Proposition \ref{prop.center-esquematic} establishes sufficient conditions to guarantee their schematicness.

\begin{proposition}\label{prop.center-esquematic} 
Let $A = \sigma(R)\langle x_1,\dotsc, x_n\rangle$ be a bijective skew PBW extension over a left Noetherian ring $R$ with the usual semi-graduation, that is, ${\rm deg}(x_i) = 1$ and ${\rm deg}(r) = 0$, for every $i$ and each $r\in R$. If for every $i$ there exists $m_i\ge 1$ such that $x_i^{m_i}\in Z(A)$, then $A$ is schematic.
\end{proposition}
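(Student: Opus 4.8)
The plan is to read off the finite family of good Ore sets demanded by Definition~\ref{def.schematic} directly from the central powers $x_i^{m_i}$, and then to reduce the covering inequality $(R_{\ge t})^m\subseteq\sum_{S\in I}Rx_S$ to a pigeonhole count on standard monomials, exactly in the spirit of Example~\ref{examplesillustrating}(i)--(iii).

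First I would fix the data: since $A$ is a bijective skew PBW extension, the standard monomials $x^{\alpha}=x_1^{\alpha_1}\cdots x_n^{\alpha_n}$ with $|\alpha|=\alpha_1+\cdots+\alpha_n=d$ form a free left $R$-basis of $A_d$ and $A_0=R$, so $A$ is a positively $\mathsf{SG}$ ring; it is left Noetherian (a bijective skew PBW extension over a left Noetherian ring is left Noetherian), hence the standing hypotheses of Definition~\ref{def.schematic} are met. For each $i$ set $z_i:=x_i^{m_i}\in Z(A)$ and $S_i:=\{z_i^{k}\mid k\ge0\}=\{x_i^{km_i}\mid k\ge0\}$ (using $x_i^{a}x_i^{b}=x_i^{a+b}$). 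As a multiplicative set of central elements each $S_i$ is a two-sided, in particular left, Ore set, and it is non-trivial since $z_i\in S_i\cap A_+$ has degree $m_i\ge1$.

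The only place where the semi-graded --- rather than graded --- nature of $A$ intervenes is the observation that multiplication by a central \emph{pure power} $x_i^{d}$ strictly preserves degrees: commuting $x_i^{d}$ past $x_1^{\alpha_1}\cdots x_{i-1}^{\alpha_{i-1}}$ gives $x^{\alpha}x_i^{d}=x_i^{d}x^{\alpha}=x^{\alpha+de_i}$, again a standard monomial, of degree $|\alpha|+d$. I would use this twice. First, it shows $x_i^{km_i}\in A''_{km_i}$ (it is homogeneous, and strictly degree-preserving on both sides by centrality), and condition~(ii) of Definition~\ref{goodOreset} is immediate for $S_i$: given $s\in S_i$ and $r\in A'$, centrality gives $rs=sr$, so $u:=r\in A'$ and $v:=s\in S_i$ work; hence each $S_i$ is a \emph{good} non-trivial left Ore set. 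Second, for any $d\ge1$ the left ideal $Ax_i^{d}=x_i^{d}A$ is two-sided, and it is an $\mathsf{SG}$ submodule because the homogeneous components of $ax_i^{d}=\sum_{\ell}a_\ell x_i^{d}$ are the elements $a_\ell x_i^{d}\in A_{\ell+d}$, each again in $Ax_i^{d}$; consequently any finite sum $J:=\sum_i Ax_i^{d_i}$ is a two-sided $\mathsf{SG}$ ideal of $A$.

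It then remains to verify the covering condition. Fix $(x_{S_i})_{1\le i\le n}\in\prod_i S_i$, say $x_{S_i}=x_i^{k_i m_i}$; if some $k_i=0$ then $\sum_i Ax_{S_i}=A$ and there is nothing to prove, so assume all $d_i:=k_i m_i\ge1$, put $J=\sum_i Ax_i^{d_i}$ and $t:=\bigl(\sum_{i=1}^{n}d_i\bigr)-n+1\ge1$. If $x^{\alpha}$ is a standard monomial with $|\alpha|\ge t$, pigeonhole forces $\alpha_i\ge d_i$ for some $i$, whence $x^{\alpha}=x_i^{d_i}\bigl(x_1^{\alpha_1}\cdots x_i^{\alpha_i-d_i}\cdots x_n^{\alpha_n}\bigr)\in x_i^{d_i}A\subseteq J$. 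Since these monomials span $\bigoplus_{k\ge t}A_k$ as a left $R$-module and $J$ is a left $R$-submodule, $\bigoplus_{k\ge t}A_k\subseteq J$; and because $J$ is a two-sided $\mathsf{SG}$ ideal, the definition of $A_{\ge t}$ gives $A_{\ge t}\subseteq J$. Thus $(A_{\ge t})^{1}=A_{\ge t}\subseteq\sum_i Ax_{S_i}$, so $\{S_1,\dots,S_n\}$ witnesses the schematicness of $A$ with $m=1$. I do not expect a genuine obstacle beyond the bookkeeping in the previous paragraph: the care needed is that an element of $Ax_i^{d_i}$ need not be homogeneous while its homogeneous components stay in $Ax_i^{d_i}$, which is precisely what makes $J$ a semi-graded ideal and traps the abstractly defined $A_{\ge t}$ inside it.
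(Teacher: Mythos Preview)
Your proof is correct and follows essentially the same approach as the paper's: take $S_i=\{x_i^{km_i}\mid k\ge0\}$ as the good Ore sets, observe that $J=\sum_i Ax_i^{d_i}$ is a two-sided $\mathsf{SG}$ ideal, and show that $\bigoplus_{k\ge t}A_k\subseteq J$ for suitable $t$, whence $A_{\ge t}\subseteq J$. The only differences are cosmetic: you supply the verification that $S_i\subseteq A''$ and condition~(ii) of Definition~\ref{goodOreset} explicitly, you treat the degenerate case $k_i=0$ separately, and your pigeonhole bound $t=\sum_i d_i-n+1$ is slightly sharper than the paper's $t=\sum_i d_i$ (the paper's proof, up to evident typos writing $R$ for $A$, simply asserts the analogous containment without the monomial-by-monomial argument). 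One wording nitpick: when you write ``for any $d\ge1$ the left ideal $Ax_i^{d}=x_i^{d}A$ is two-sided'', you are implicitly assuming $x_i^{d}$ is central, which holds only for $d$ a multiple of $m_i$; since you only apply this with $d_i=k_im_i$, the argument is unaffected.
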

\begin{proof}
From \cite[Theorem 3.1.5]{libropbw} we know that $A$ is left Noetherian. Since $x_i^{m_i}\in Z(A)$, it follows that $\{x_i^{m_im}\mid m\in\mathbb{N}\}$ is a non-trivial good left Ore Set for every $i$. Let us see that these sets satisfy the schematicness condition. Let $t_i\in \mathbb{N}$. Then $\sum_{i=1}^n Rx_i^{m_it_i}$ is a two-sided ideal and an $\mathsf{SG}$ submodule of $A$. Besides, if $t := \sum m_it_i$ then $\bigoplus_{m\ge t}R_m\subseteq \sum_{i=1}^n Rx_i^{m_it_i}$, whence $R_{\ge t}\subseteq \sum_{i = 1}^{n} Rx_i^{m_it_i}$.
\end{proof}

Examples \ref{firstindep} and \ref{secondindep} show that the theory presented by Lezama about Serre-Artin-Zhang-Verevkin theorem and the one developed in this paper are independent.

\begin{example}\label{firstindep}
Proposition \ref{prop.center-esquematic} guarantees that if $R$ is a left Noetherian noncommutative ring, then $ A = R[x]$ is schematic, and so Theorem \ref{Serre-theo} holds for $A$. Notice that this result cannot be obtained from the the theory developed by Lezama \cite{Lezama2021, LezamaLatorre2017} because does not satisfy Lezama's assumption (C4) that says that $A_0 = R$ is a commutative ring. Notice that in the particular case of the $\Bbbk$-algebra $R = M_n(\Bbbk)$, since $R$ is not connected it does not satisfy the definition of schematicness given by Van Oystaeyen and Willaert (Definition \ref{schematicgradedsetting}), and it is not a {\em finitely semi-graded algebra} in the sense of Lezama \cite[Definition 2.4]{LezamaLatorre2017}. However, from our point of view, the algebra is schematic and Theorem \ref{Serre-theo} holds.
\end{example}

\begin{example}\label{secondindep}
Consider $A$ as the 3-dimensional skew polynomial algebra subject to the relations
\begin{equation*}
    yz=zy, \quad xz=zx, \quad {\rm and}\quad yx=xy-z. 
\end{equation*}
This algebra satisfies the Serre-Artin-Zhang-Verevkin theorem \cite[Example 18.5.15 (v)]{libropbw} following the ideas presented by Lezama \cite{Lezama2021, LezamaLatorre2017}

It is straightforward to see that the following relations hold:
\begin{equation*}
    y^nx=xy^n-ny^{n-1}z,\quad {\rm and}\quad
    yx^n=x^ny-nx^{n-1}z,\quad n > 0.
    \end{equation*}
\end{example}
If ${\rm char}(\Bbbk)=p>0$, then $x^p,y^p,z\in Z(A)$, and so Proposition \ref{prop.center-esquematic} implies that $A$ is schematic. 

\medskip

Next, consider the case ${\rm char}(\Bbbk) = 0$. Let us see that $A''_n=\{az^n\mid a\in\Bbbk, n\in\mathbb{N}\}$. With this aim, consider $\alpha\in A''_n$. Then $\alpha$ is a homogeneous element of degree $n$, and so
\[
\alpha=\sum_{i+j\leq n}a_{i,j}x^iy^jz^{n-i-j} .
\]
Since that
   \begin{align*}
                \alpha x&=\sum_{i+j\leq n}a_{i,j}x^iy^jxz^{n-i-j}\\
                &= \sum_{i+j\leq n}a_{i,j}x^i(xy^j-jy^{j-1z})z^{n-i-j}\\
                &= \sum_{i+j\leq n}a_{i,j}x^{i+1}y^jz^{n-i-j}-\sum_{i+j\leq n}ja_{i,j}x^iy^{j-1}z^{n-i-j+1},
\end{align*}
the element $\alpha x$ is homogeneous of degree $n+1$, whence $ja_{i,j}=0$, for each $i, j$. In a similar way, for the element $y\alpha$ we obtain $ia_{i,j}=0$, for each $i,j$. These facts imply that the only non-zero coefficient is precisely $a_{0,0}$, and so $\alpha = a_{0,0}z^n$. This shows that $A''_n=\{az^n\mid a\in\Bbbk, n\in\mathbb{N}\}$.

Now, let us prove that $A$ is not schematic. Since $z\in Z(A)$, then $S=\{az^k\mid a\in\Bbbk^*, k\in\mathbb{N}\}$ is a good left Ore set. Note that for all $m\in \mathbb{N}$, $x^m\in A_{\ge m}\backslash Rz$, whence $S$ does not satisfy the schematicness condition. Besides, due to the reasoning above, it is clear that $S$ contains any other good left Ore of $A$, and so if $S$ does not satisfy the schematicness condition, then no other set will.

\medskip

Finally, Proposition \ref{propositionresume} presents necessary conditions to assert the schematicness of skew PBW extensions with two indeterminates.

\begin{proposition}\label{propositionresume}
Let $A = \sigma(\Bbbk)\langle x,y\rangle$ be a skew PBW extension over $\Bbbk$ defined by the relation
\begin{equation}\label{SPBWtwoindeter}
    yx = dxy + ex + fy + g,\quad d, e, f, g\in \Bbbk.
\end{equation}
$A$ is schematic if and only if one of the following cases holds:
\begin{enumerate}
\item [\rm (1)] $yx = dxy$ {\rm (}quantum plane{\rm )}.
\item [\rm (2)] $yx=xy + g$ with ${\rm char}(\Bbbk)=p>0$.
\item [\rm (3)] $yx = dxy + g$ with $d \neq 1$ and $d^p = 1$ for some $p\in \mathbb{N}$.
\end{enumerate}
\end{proposition}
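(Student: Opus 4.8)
The plan is to prove the two implications separately, with the commutation identities obtained by iterating \eqref{SPBWtwoindeter} as the main computational tool: e.g.\ when $e=f=0$ one gets $yx^{n}=d^{n}x^{n}y+\tfrac{d^{n}-1}{d-1}g\,x^{n-1}$ and $y^{n}x=d^{n}xy^{n}+\tfrac{d^{n}-1}{d-1}g\,y^{n-1}$ (reading $\tfrac{d^{n}-1}{d-1}$ as $n$ when $d=1$), while when $f=g=0$ one gets $yx^{n}=d^{n}x^{n}y+\tfrac{d^{n}-1}{d-1}e\,x^{n}$; recall also that $A$ has $\Bbbk$-basis $\{x^{i}y^{j}\}$, is left Noetherian by \cite[Theorem~3.1.5]{libropbw}, is positively $\mathsf{SG}$ with $A_{n}=\bigoplus_{i+j=n}\Bbbk x^{i}y^{j}$, and that $d\in\Bbbk^{*}$.

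For $(\Leftarrow)$ I would, in each of the three cases, exhibit good left Ore sets fulfilling Definition~\ref{def.schematic}. In case (1) the relation is homogeneous of degree $2$, so $A$ is genuinely $\mathbb{N}$-graded with degree-exact multiplication and is a domain; then $x$ and $y$ are normal, hence $S_{1}=\{x^{k}\}$ and $S_{2}=\{y^{k}\}$ are left Ore sets, they are good (they lie in $R''$ because the semi-graduation is an honest grading, and condition (ii) of Definition~\ref{goodOreset} follows from the Ore property together with the absence of zero divisors), and for $t_{1},t_{2}\ge1$ every monomial $x^{i}y^{j}$ with $i+j\ge t_{1}+t_{2}$ satisfies $i\ge t_{1}$ or $j\ge t_{2}$, hence lies in $Ax^{t_{1}}+Ay^{t_{2}}$; since $R_{\ge t_{1}+t_{2}}=\bigoplus_{k\ge t_{1}+t_{2}}A_{k}$ here, the inequality of Definition~\ref{def.schematic} holds with exponent $1$. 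In cases (2) and (3) the identities above, evaluated at $n=p$, give $yx^{p}=x^{p}y$ and $y^{p}x=xy^{p}$ (using $pg=0$ in case (2) and $d^{p}=1$ in case (3)), so $x^{p},y^{p}\in Z(A)$ and Proposition~\ref{prop.center-esquematic} applies.

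For $(\Rightarrow)$ I would argue by contraposition: if the relation is none of (1)--(3), then exactly one of the following holds --- (I) $(e,f)\neq(0,0)$; (II) $e=f=0$, $g\neq0$, $d=1$ and ${\rm char}\,\Bbbk=0$; (III) $e=f=0$, $g\neq0$, $d\neq1$ with $d$ not a root of unity. The crucial step is the computation of $R''$: a homogeneous $r=\sum_{i+j=n}a_{ij}x^{i}y^{j}$ with $n\ge1$ lies in $R''$ only if $y\cdot r$ and $r\cdot x$ are again homogeneous, and expanding these two products by means of the commutation identities and isolating the lower-degree ``defect'' terms forces $a_{ij}=0$ outside a very small monomial set. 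In cases (II) and (III) this yields $R''=\Bbbk$; the same happens in case (I) when ${\rm char}\,\Bbbk=0$, and also when ${\rm char}\,\Bbbk>0$ but neither a power of $x$ nor a power of $y$ is central --- note that, again by the identities, a central power of $x$ forces $f=0$ and a central power of $y$ forces $e=0$, so in case (I) at most one of the two variables has a central power. Whenever $R''=\Bbbk$ there is no non-trivial good left Ore set at all, and $A$ is not schematic.

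The only remaining possibility is ${\rm char}\,\Bbbk>0$, $(e,f)\neq(0,0)$, and, after interchanging $x$ and $y$ if necessary, that some power $x^{N}$ is central while no power of $y$ is; here the relation has the shape $yx=dxy+ex+g$ with $e\neq0$, and the $R''$-computation confines $R''$ to a commutative polynomial subalgebra $\Bbbk[x^{N},y^{M}]$ in which only $x^{N}$ is central. In this subcase I would finish by showing that no finite family of good left Ore sets can work: a direct argument with the relation shows that an element of $R''$ with genuine $y^{M}$-dependence cannot be a left denominator --- pushing it past $x$ always leaves a remainder that no left multiple can absorb --- so every non-trivial good left Ore set has all of its positive-degree members in the central subalgebra $\Bbbk[x^{N}]$. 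Consequently, for any admissible choice of generators $s_{i}$ lying in $R_{+}$ one has $\sum_{i}As_{i}\subseteq(x^{N})$, so the inclusion $(R_{\ge t})^{m}\subseteq\sum_{i}As_{i}$ demanded by Definition~\ref{def.schematic} would descend, modulo the two-sided ideal $(x^{N})$, to $(\overline{R_{\ge t}})^{m}=0$ in $A/(x^{N})$; but this is impossible, since $\overline{R_{\ge t}}$ contains the image of $y^{t}$, which is a non-zero-divisor in $A/(x^{N})$. I expect this last subcase to be the main obstacle: when $R''\supsetneq\Bbbk$ one cannot read off non-schematicness from the failure of the obvious Ore sets, and one must rule out \emph{all} finite families of good left Ore sets; the rigidity supplied by the commutation relation --- a genuine $y$-dependence breaks the left Ore property, so what survives lies inside the single central variable $x^{N}$, whose ideals contain no pure power of $y$ --- is precisely what makes this feasible. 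The rest of the proof is careful but routine bookkeeping with the iterated commutation identities.
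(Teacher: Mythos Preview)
Your overall architecture matches the paper's: prove $(\Leftarrow)$ by exhibiting central powers and invoking Proposition~\ref{prop.center-esquematic}, and prove $(\Rightarrow)$ by computing $R''$ from the iterated commutation identities and showing that the surviving good Ore sets are too small. The paper organises the computation via the quantities $P=dx+f$, $Q=ex+g$, $\overline P=dy+e$, $\overline Q=fy+g$ and the sums $\Delta_n=\sum_{i=0}^{n-1}d^{i}$, and the decisive dichotomy is what it calls \emph{Condition U}: the existence of some $n\ge1$ with $\Delta_n=0$, equivalently ($d=1$ and ${\rm char}\,\Bbbk>0$) \emph{or} ($d\neq1$ and $d$ is a root of unity, in any characteristic).

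There is, however, a genuine gap in your case split. You assert that in case~(I) with ${\rm char}\,\Bbbk=0$ one always has $R''=\Bbbk$. This is false: take $\Bbbk=\mathbb C$, $d=-1$, $e=1$, $f=g=0$, so $yx=-xy+x$. Then $yx^{2}=x^{2}y$, hence $x^{2}\in Z(A)$ and $R''\supseteq\Bbbk[x^{2}]\supsetneq\Bbbk$. More generally, whenever $f=0$ and $d$ is a non-trivial root of unity (Condition~U holds, characteristic arbitrary), $x^{p}$ is central for the order $p$ of $d$, and $R''=\Bbbk[x^{p}]$. So the correct dividing line is not the characteristic but Condition~U; your ``remaining possibility'' should read ``$(e,f)\neq(0,0)$ and Condition~U holds'' rather than ``${\rm char}\,\Bbbk>0$''. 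Once you make that change, your final argument (good Ore sets confined to $\Bbbk[x^{N}]$, hence $\sum_i As_i\subseteq(x^{N})$, while $y^{tm}\notin(x^{N})$) applies verbatim and matches the paper.

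A smaller point: in that remaining case the paper shows directly that $R''=\Bbbk[x^{p}]$ (no $y$-component at all), because $ay^{n}\in R''$ forces $e=0$ and a genuinely mixed $\xi$ forces $e=0$ as well when $\overline Q\neq0$ (and a separate short computation when $\overline Q=0$). So your intermediate description ``$R''\subseteq\Bbbk[x^{N},y^{M}]$'' and the subsequent argument that $y^{M}$-dependent elements fail the Ore condition are unnecessary; the obstruction is already visible at the level of $R''$.
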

\begin{proof}
We divide the proof into four parts.
\begin{enumerate}
\item [\rm (a)] Let $P := d x+ f$, $Q := ex + g$, $\overline{P}: = d y + e$ and $\overline{Q} :=  fy + g$. Notice that the binomial theorem holds for $P$ and $\overline{P}$, that is,
 \begin{equation*}
     P^i=\sum_{k=0}^i \binom{i}{k} d^{i-k}f^k x^{i-k}, \quad {\rm and}\quad 
     \overline{P}^i=\sum_{k=0}^i \binom{i}{k} d^{i-k} e^k y^{i-k},\ {\rm for\ all}\ i>0,
 \end{equation*}

and that $yx=Py+Q=x\overline{P} +\overline{Q}$. 

\medskip

Let us see some relations of commutativity between $x$ and $y$.

\medskip

For $n\ge 1$, the following identities  
\begin{align}
             yx^n &= P^ny+\sum_{i=0}^{n-1} P^{n-1-i}x^iQ,\ {\rm and}\\
             y^nx &= x\overline{P}^{n}+\sum_{i=0}^{n-1} \overline{P}^{n-1-i}y^i\overline{Q}
\end{align}
hold.

\medskip

The case $n = 1$ is clear. Suppose that the assertion holds for $n$. Then
        \begin{align*}
            yx^{n+1} &=  \left(P^ny+\sum_{i=0}^{n-1} P^{n-1-i}x^iQ\right)x\\
            &=P^n(Py+Q)+\sum_{i=0}^{n-1} P^{n-1-i}x^{i+1}Q\\
            &= P^{n+1}y+(P^n+\sum_{i=0}^{n-1} P^{n-1-i}x^{i+1})Q\\
            &=P^{n+1}y+\sum_{i=0}^{n} P^{n-i}x^{i}Q,
        \end{align*}
which concludes the proof. In a similar way, we can prove the other equality.

\item [\rm (b)] For $n>0$, we write $\Delta_n :=\sum_{i=0}^{n-1}d^i$.

\medskip

Let us see that if $\xi = ax^n$ {\rm (}resp. $\xi = ay^n${\rm )} belongs to $R''_n$ with $a\neq 0$, then $f = 0$ {\rm (}resp. $e = 0${\rm )}. In the case $Q\neq 0$ {\rm (}resp. $\overline{Q}\neq 0${\rm )}, it follows that $\Delta_n = 0$.

\medskip

The equalities 
        \begin{align*}
            y\xi & =ayx^n\\
            & = a\left(P^ny+\sum_{i=0}^{n-1}P^{n-1-i}x^iQ \right)\\
            & = a\left(\sum_{k=0}^n\binom{n}{k}d^{n-k}f^kx^{n-k}y+\sum_{i=0}^{n-1}P^{n-1-i}x^iQ  \right),
\end{align*}
show that the element $y\xi$ is homogeneous of degree $n+1$, and that the monomials having $y$ satisfy that if $k\neq 0$ then $a\binom{n}{k}d^{n-k}f^{k}=0$. In particular, if $k = n$, then $af^{n} = 0$, whence $f = 0$.

\medskip

Now, with respect with the other monomials, it is clear that these form a polynomial element of degree less than $n+1$, which shows that  
\[ 
a\sum_{i=0}^{n-1}P^{n-1-i}x^iQ =0.
\]

Since $f = 0$, $P = dx$, and so
\[
0 = a\sum_{i=0}^{n-1}P^{n-1-i}x^iQ = a\left(\sum_{i=0}^{n-1}d^{n-1-i}\right)x^{n-1}Q.
\]
Thus, if $Q\neq0$ then 
\[
0 = \sum_{i=0}^{n-1}d^{n-1-i}=\sum_{i=0}^{n-1}d^{i}=\Delta_n.
\]

The proof for the case $ay^n$ is analogous.

\item [\rm (c)]
Let $n\ge 1$ and
\[
\xi=\sum_{i=0}^na_ix^iy^{n-i}\in R''_n.
\]

Let us show that if $\overline{Q}\neq 0$ {\rm (}resp. $Q\neq 0${\rm )} and $\xi\neq a_nx^n$ {\rm (}resp. $\xi\neq a_0y^n${\rm )}, then $e = 0$ {\rm (}resp. $f = 0${\rm )} and  $\Delta_k = 0$ for some $0\leq k\leq n$.

\medskip

Note that $\xi x$ is a homogeneous element of degree $n+1$. We have the equalities given by
 \begin{equation}\label{equation.xi.x}
    \begin{split}
    \xi x&=a_nx^{n+1}+\sum_{i=0}^{n-1}a_ix^iy^{n-i}x\\
     &= a_nx^{n+1}+\sum_{i=0}^{n-1}a_ix^i\left(x\overline{P}^{n-i}+\sum_{j=0}^{n-1-i}\overline{P}^{n-1-i-j}y^j\overline{Q} \right)\\
     &=a_nx^{n+1}+\sum_{i=0}^{n-1}\left(a_ix^{i+1}\overline{P}^{n-i}+a_ix^i\sum_{j=0}^{n-1-i}\overline{P}^{n-1-i-j}y^j\overline{Q} \right).
     \end{split}
 \end{equation}

Suppose that there exists $0\le i\le n-1$ such that $a_i\neq 0$, and let $t := {\rm min}\{0\le i\le n\mid a_i\neq 0\}$. Then
\[
\xi x=a_nx^{n+1}+\sum_{i=t}^{n-1}\left(a_ix^{i+1}\overline{P}^{n-i}+a_ix^i\sum_{j=0}^{n-1-i}\overline{P}^{n-1-i-j}y^j\overline{Q}\right).
\]
Notice that the lower exponent of $x$ appears when $i = t$, and we have that 
\[
a_tx^t\sum_{j=0}^{n-1-t}\overline{P}^{n-1-t-j}y^j\overline{Q}
\]
is a polynomial element of degree less than $n+1$ that have no another terms of $\xi x$, whence necessarily this polynomial has to be the zero element. Since $a_t\neq 0$, it follows that
\[
\left(\sum_{j=0}^{n-1-t}\overline{P}^{n-1-t-j}y^j\right)\overline{Q} = 0. 
\]
By using that $\overline{Q} \neq 0$, we have $\sum_{j=0}^{n-1-t}\overline{P}^{n-1-t-j}y^j=0$. Hence, 
\begin{equation}\label{eq.oQ.no.0}
\begin{split}
    0 &=\sum_{j=0}^{n-1-t}\overline{P}^{n-1-t-j}y^j\\
    &= \sum_{j=0}^{n-1-t}\left( \sum_{k=0}^{n-1-t-j} \binom{n-1-t-j}{k} d^{n-1-t-j-k}e^k y^{n-1-t-j-k}\right)y^j\\
    &= \sum_{j=0}^{n-1-t}\sum_{k=0}^{n-1-t-j} \binom{n-1-t-j}{k} d^{n-1-t-j-k}e^k y^{n-1-t-k}.
 \end{split}
\end{equation}

The coefficient of the monomial $y^0$ is obtained when $j=0$ and $k=n-1-t$, which implies that 
\[
\binom{n-1-t}{n-1-t}d^{n-1-t-0-(n-1-t)}e^{n-t-1} = e^{n-t-1} = 0,
\]
whence $n-1\neq t$ and $e = 0$, and by replacing in the expression (\ref{eq.oQ.no.0}), it follows that 
\[
0 = \sum_{j=0}^{n-1-t}d^{n-1-t-j}y^{n-1-t},
\]
and so
\[
0 = \sum_{j=0}^{n-1-t}d^{n-1-t-j}=\sum_{j=0}^{n-1-t}d^{j}=\Delta_{n-t}.
\]

The condition that there exists $n > 0$ such that $\Delta_n=0$ is recursive, so will call it {\em Condition U}. It is straightforward to see that this condition is satisfied if and only if one of the following conditions hold:
\begin{itemize}
    \item $d = 1$ and ${\rm char}(\Bbbk)=p>0$.
    \item $d \neq 1$ and there exists $p > 0$ such that $d^p = 1$.
\end{itemize}

\item [\rm (d)]  With the analysis above, we can determine the schematicness of the skew PBW extensions defined by relation (\ref{SPBWtwoindeter}). 

\medskip

First of all, note that if $d = 0$, Part (c) implies that $A''=\Bbbk$ since the {\em Condition U} does not hold. Thus, the skew PBW extension $A$ is not schematic. From now on, consider $d\neq 0$. It is clear that the case $e = f = g = 0$ shows that $A$ is schematic. Let us see what happens if one of these three elements is non-zero and the {\em Condition U} does not hold.

\medskip

Let $\xi\in A''_n$ with $n > 1$. If $g \neq 0$ then $Q \neq 0 \neq \overline{Q}$, and by Part (c), we have $\xi = a_nx^n=a_0y^n$, whence $\xi=0$, and so $A'' = \Bbbk$, which shows that $A$ is not schematic. If $e \neq 0$, then $Q\neq 0$, and Part (c) implies that $\xi = a_0y^n$, whence $\xi=0$ (Part (b)). In this case, $A$ is not schematic. Similarly, if $f \neq 0$ then $A$ is not schematic.

\medskip

Let us see the case where {\em Condition U} holds (with the less value of $p$ satisfying this condition), and two of the three elements $e, f, g$ being non-zero. If $e\neq 0\neq f$, then Part (c) implies that $\xi = 0$, whence $A$ is not schematic. Now, if $e \neq 0\neq g$ and $f = 0$, it follows that $x^p\in Z(R)$. On the other hand, $Q \neq 0 \neq \overline{Q}$ and so Part (c) shows that $\xi = a_nx^n$ with $p\mid n$. In this way, $A''=\{ax^{pk}\mid a\in\Bbbk, k\in\mathbb{N}\}$, and hence $S = \{ax^{pk}\mid a\in\Bbbk^*, k\in\mathbb{N} \}$ is the greatest left Ore set, and since $S$ does not satisfy the condition of schematicness (due to the powers of $y$), it is clear that $A$ is not schematic. Analogously, one can check that if $f \neq 0 \neq g$ and $e = 0$, then $A$ is not schematic.

\medskip

Now, let us see the situation where {\em Condition U} is satisfied and only one element is non-zero. If $e \neq 0$ and $f = 0 = g$, then $\overline{Q} = 0$, and so equation \ref{equation.xi.x} can be written as
\[
\xi x = a_nx^{n+1}+\sum_{i=0}^{n-1}\sum_{k=0}^{n-i}\binom{n-i}{k}a_i d^{n-i-k} e^kx^{i+1}y^{n-i-k}.
\]
Note that every value of $i$ corresponds to only one power of $x$, and when $k\neq 0$ the degree of $x^{i+1}y^{n-i-k}$ is less than $n+1$. These facts show that $ \binom{n-i}{k}a_id^{n-i-k} e^k=0$, for each $0\le i\le n-1$ and all $0<k\le n-i$. In particular, if $k=1$ then $a_i=0$, and so $\xi = a_nx^n$. Part (b) implies that $p\mid n$, whence $A$ is not schematic by the same reason as above in the case $e \neq 0 \neq g$ and $f = 0$. Analogously, if $f \neq 0$ and $e = 0 = g$, it follows that $A$ is not schematic.

\medskip

Finally, if the {\em Condition U} holds and $g \neq 0$ with $e = 0 = f$, then it is straightforward to see that $x^p, y^p\in Z(R)$, whence $A$ is schematic. 
\end{enumerate}
\end{proof}

\begin{remark}
Proposition \ref{propositionresume} shows that there are Ore extensions over schematic rings that are not schematic. This is consistent with Proposition \ref{VanOystaeyenWillaert1997Theorem3}.
\end{remark}

\section{Conclusions and future work}\label{conclusionsfuturework}

In this paper, we have defined the notion of schematic ring in the context of semi-graded objects and illustrated our Theorem \ref{Serre-theo} with some non-$\mathbb{N}$-graded algebras. With the aim of obtaining new examples of schematic algebras in this more general setting, it is of interest to generalize the criterion formulated by Van Oystayen and Willaert \cite[Lemma 2]{VanOystaeyenWillaert1997} that says that if $R$ is an $\mathbb{N}$-graded $\Bbbk$-algebra such that its center $Z(R)$ is Noetherian and such that $R$ is a finitely generated $Z(R)$-module, then $R$ is schematic. The importance of this criterion can be appreciated in \cite{ChaconReyes2022} where the authors investigated the schematicness of {\em skew Ore polynomials of higher order generated by homogenous quadratic relations} defined by Golovashkin and Maksimov \cite{GolovashkinMaksimov1998, GolovashkinMaksimov2005}. Since these algebras are non-$\mathbb{N}$-graded, the research on its schematicness will be crucial for another families of noncommutative rings.

\medskip

Now, having in mind that Willaert \cite{Willaert1998} studied the least possible number of Ore sets satisfying the condition of schematicness for $\mathbb{N}$-graded algebras, and call it the {\em schematic dimension}, a natural task is to investigate this notion in the setting of semi-graded rings. Also, an important topic of future research for these rings is the \v{C}ech cohomology developed by Van Oystayen and Willaert \cite{vanOystaeyenWillaert1996, VanOystaeyenWillaert1997}.

\end{document}